\newcommand{\arxiv}[2][]{\ifthenelse{\equal{#1}{}}
{\href{http://arxiv.org/abs/#2}{\tt arXiv:#2}}
{\href{http://arxiv.org/abs/math/#2}{\tt arXiv:math.#1/#2}}}
\renewcommand\subsection{\@startsection
{subsection}{2}{0cm} 
{-\baselineskip}     
{0.5\baselineskip}   
{\sffamily}} 
\theoremstyle{plain}
\newtheorem{theorem}{Theorem}[section]
\newtheorem{corollary}[theorem]{Corollary}
\newtheorem{proposition}[theorem]{Proposition}
\theoremstyle{definition}
\newtheorem{example}[theorem]{Example}
\newtheoremstyle{remark}
{}{}{}{}{\itshape}{}{ }{\thmname{#1}\thmnumber{ \itshape #2.}}
\theoremstyle{remark}
\newtheorem{remark}[theorem]{Remark}
\newtheoremstyle{concise}
{}{}{}{}{\bfseries}{}{ }{\thmnumber{#2.}\thmnote{ #3.}}
\theoremstyle{concise}
\def\formulas{\mathversion{language}}
\def\metameta{\mathversion{normal}}
\DeclareMathAlphabet{\mf}{OML}{zplm}{m}{it} 
\DeclareMathAlphabet{\ts}{OT1}{cmss}{m}{sl} 
\DeclareMathAlphabet{\fm}{U}{eur}{m}{n} 
\DeclareMathAlphabet{\tr}{OT1}{cmss}{m}{n} 
\DeclareMathAlphabet{\mm}{OML}{cmm}{m}{it} 
\DeclareMathAlphabet{\script}{LS1}{stixscr}{m}{n}
\DeclareMathAlphabet{\mathbbb}{U}{bbold}{m}{n}
\newcommand{\newsym}[5]{\fontfamily{#2}\fontencoding{#1}\fontseries{#3}\fontshape{#4}\selectfont\char#5}
\newcommand{\newmathsymbol}[6]{#1{\@Pimathsymbol{#2}{#3}{#4}{#5}{#6}}}
\def\@Pimathsymbol#1#2#3#4#5{\mathchoice
  {\@Pim@thsymbol{#1}{#2}{#3}{#4}{#5}\tf@size}
  {\@Pim@thsymbol{#1}{#2}{#3}{#4}{#5}\tf@size}
  {\@Pim@thsymbol{#1}{#2}{#3}{#4}{#5}\sf@size}
  {\@Pim@thsymbol{#1}{#2}{#3}{#4}{#5}\ssf@size}}
\def\@Pim@thsymbol#1#2#3#4#5#6{\mbox{\fontsize{#6}{#6}\newsym{#1}{#2}{#3}{#4}{#5}}}
\def\too{\newmathsymbol{\mathrel}{LS1}{stixsf}{m}{n}{"99}}
\newcommand{\Dfrac}[2]{\genfrac{}{}{1.2pt}0{#1}{#2}}
\def\lgroup{\newmathsymbol{\mathopen}{LS2}{stixex}{m}{n}{"DC}}
\def\rgroup{\newmathsymbol{\mathclose}{LS2}{stixex}{m}{n}{"DD}}
\newcommand{\mq}[1]{\lgroup #1\rgroup\,}
\def\mc#1{\mq{\oldstylenums{#1}}}
\def\prin{\boldsymbol\cdot\hskip1.5pt}
\def\cltop{\top}
\def\clbot{\bot}
\def\ab{{\mathchoice
  {\mbox{\larger[1]$\times$}}
  {\mbox{\larger[1]$\times$}}
  {\mbox{\larger[-2]$\times$}}
  {\mbox{\larger[-4]$\times$}}
}}
\def\triv{\checkmark}
\def\from{\leftarrow}
\def\To{\ \longrightarrow\ }
\def\tofrom{\leftrightarrow}
\def\Tofrom{\ \longleftrightarrow\ }
\def\imp{\Rightarrow}
\def\Imp{\ \Longrightarrow\ }
\def\iff{\Leftrightarrow}
\def\Iff{\ \Longleftrightarrow\ }
\def\turnstile{\vdash}
\DeclareMathSymbol{\impord}{\mathord}{symbols}{41}
\DeclareMathSymbol{\mand}{\mathbin}{operators}{`\&}
\DeclareMathSymbol{\oc}{\mathord}{operators}{`!}
\DeclareMathSymbol{\wn}{\mathord}{operators}{`?}
\DeclareMathSymbol{\alpha}     {\mathalpha}{letters}{"0B}
\DeclareMathSymbol{\beta}      {\mathalpha}{letters}{"0C}
\DeclareMathSymbol{\gamma}     {\mathalpha}{letters}{"0D}
\DeclareMathSymbol{\delta}     {\mathalpha}{letters}{"0E}
\DeclareMathSymbol{\epsilon}   {\mathalpha}{letters}{"0F}
\DeclareMathSymbol{\zeta}      {\mathalpha}{letters}{"10}
\DeclareMathSymbol{\eta}       {\mathalpha}{letters}{"11}
\DeclareMathSymbol{\theta}     {\mathalpha}{letters}{"12}
\DeclareMathSymbol{\iota}      {\mathalpha}{letters}{"13}
\DeclareMathSymbol{\kappa}     {\mathalpha}{letters}{"14}
\DeclareMathSymbol{\lambda}    {\mathalpha}{letters}{"15}
\DeclareMathSymbol{\mu}        {\mathalpha}{letters}{"16}
\DeclareMathSymbol{\nu}        {\mathalpha}{letters}{"17}
\DeclareMathSymbol{\xi}        {\mathalpha}{letters}{"18}
\DeclareMathSymbol{\pi}        {\mathalpha}{letters}{"19}
\DeclareMathSymbol{\rho}       {\mathalpha}{letters}{"1A}
\DeclareMathSymbol{\sigma}     {\mathalpha}{letters}{"1B}
\DeclareMathSymbol{\tau}       {\mathalpha}{letters}{"1C}
\DeclareMathSymbol{\upsilon}   {\mathalpha}{letters}{"1D}
\DeclareMathSymbol{\phi}       {\mathalpha}{letters}{"1E}
\DeclareMathSymbol{\chi}       {\mathalpha}{letters}{"1F}
\DeclareMathSymbol{\psi}       {\mathalpha}{letters}{"20}
\DeclareMathSymbol{\omega}     {\mathalpha}{letters}{"21}
\DeclareMathSymbol{\varepsilon}{\mathalpha}{letters}{"22}
\DeclareMathSymbol{\vartheta}  {\mathalpha}{letters}{"23}
\DeclareMathSymbol{\varpi}     {\mathalpha}{letters}{"24}
\DeclareMathSymbol{\varrho}    {\mathalpha}{letters}{"25}
\DeclareMathSymbol{\varsigma}  {\mathalpha}{letters}{"26}
\DeclareMathSymbol{\varphi}    {\mathalpha}{letters}{"27}
\DeclareMathSymbol{\bbomega}{\mathalpha}{letters}{"7F}
\def\Ds{\mathcal{D}}
\def\Dec{\mathfrak{D}}
\def\F{\mathcal{F}}
\def\G{\mathcal{G}}
\def\H{\mathcal{H}}
\def\L{\script{L}}
\def\N{\mathbbb{N}}
\def\Q{\mathbbb{Q}}
\def\R{\mathbbb{R}}
\def\Rt{\mathbbb{\omega}}
\def\Stab{\mathfrak{S}}
\def\0{\mathbbb{0}}
\def\1{\mathbbb{1}}
\def\m{\mathbbb{\mu}}
\def\q{\mathbbb{q}}
\def\var{\mathbbb{x}}
\def\pr{\mathbbb{p}}
\def\x{\times}
\def\phi{\varphi}
\def\emptyset{\varnothing}
\renewcommand{\:}{\colon}
\DeclareMathOperator{\Int}{Int}
\DeclareMathOperator{\Cl}{Cl}
\DeclareMathOperator{\Hom}{Hom}
\DeclareMathOperator{\Plus}{+}
\DeclareMathOperator{\Neg}{\sim}
\DeclareMathOperator\suchthat{:}
\def\octo{\oc_{_\to}} \def\wnto{\wn_{_\to}}
\begin{document}

\title[A Galois connection between intuitionistic \& classical logics. I: Syntax]
{A Galois connection between intuitionistic and classical logics. I: Syntax}
\addtocounter{footnote}{1}
\footnotetext{{\it Galois connection} is a standard notion of order theory, whose eponymous example
is the correspondence between the poset of fixed fields and the poset of subgroups in Galois theory.
It can be defined as a pair of adjoint functors between two posets, regarded as categories.
See \cite{HH}*{pp.\ 166--167} for a concise introduction to Galois connections,
and \cite{EKMS} for further details.}
\author{Sergey A. Melikhov}
\address{Steklov Mathematical Institute of the Russian Academy of Sciences,
ul.\ Gubkina 8, Moscow, 119991 Russia}
\email{melikhov@mi-ras.ru}
\thanks{Supported by The Fund for Math and
Russian Foundation for Basic Research Grant No.\ 15-01-06302}

\begin{abstract}
In a 1985 commentary to his collected works \cite{Kol3},
Kolmogorov remarked that his 1932 paper \cite{Kol}
``was written in hope that with time, the logic of solution of problems
[i.e., intuitionistic logic] will become a permanent part of a [standard]
course of logic.
A unified logical apparatus was intended to be created, which would deal
with objects of two types --- propositions and problems.''
We construct such a formal system, as well as its predicate version QHC, 
which is a conservative extension of both the intuitionistic predicate calculus QH 
and the classical predicate calculus QC.

The only new connectives $\wn$ and $\oc$ of QHC induce a Galois connection
between the Lindenbaum posets (i.e.\ the underlying posets of the Lindenbaum algebras)
of QH and QC.
Kolmogorov's double negation translation of propositions into problems
extends to a retraction of QHC onto QH; whereas G\"odel's provability translation of problems
into modal propositions
extends to a retraction of QHC onto its QC+($\wn\oc$) fragment, identified with the modal logic QS4.
The QH+($\oc\wn$) fragment is an intuitionistic modal logic, whose modality $\oc\wn$ is a
``strict lax modality'' in the sense of Aczel --- and thus resembles the squash/bracket operation in
intuitionistic type theories.

The axioms of QHC attempt to give a fuller formalization (with respect to the axioms of intuitionistic
logic) to the two best known contentual interpretations of intiuitionistic logic: Kolmogorov's problem
interpretation (incorporating standard refinements by Heyting and Kreisel) and the proof interpretation
by Orlov and Heyting (as clarified by G\"odel).
While these two interpretations are often conflated, from the viewpoint of the axioms of QHC neither
of them reduces to the other one, although they do overlap.
\end{abstract}

\maketitle

\section{Introduction}\label{intro}

\subsection{Problems versus propositions}\label{intro1}
The present series of papers (the sequels being \cite{M2} and \cite{M3}) belongs firmly to the field of Logic,
but is motivated primarily by considerations of mathematical practice rather than any internal developments
in the field of Logic.
Therefore it is addressed not only to logicians, but to other mathematicians as well.
The reader who is not familiar with any of the terms used can consult the treatise \cite{M0} as need arises;
one of its main goals is precisely to make the present series accessible to a general mathematical audience.

This paper introduces a logical apparatus that enables one to study in a formal setting basic
interdependencies between what can be called (cf.\ \S\ref{philosophy}) two modes of
knowledge: knowledge-that (or knowledge of truths) and knowledge-how (or knowledge of methods).
In mathematical practice, these have been traditionally represented by {\it propositions} (i.e., assertions,
such as theorems and conjectures) and {\it problems} (such as geometric construction problems and
initial value problems).
The English word ``problem'' is, in fact, somewhat imprecise; we will use it in the narrow sense of
a request (or desire) to find a construction meeting specified criteria on output and permitted means
(as in ``chess problem'').
This meaning is less ambiguously captured by the German {\it Aufgabe} (as opposed to the German
{\it Problem}) and the Russian {\it задача} (as opposed to {\it проблема}).
The closest English word is {\it task} (other words with related meanings include {\it assignment, exercise,
challenge, aim, mission}), but as it is not normally used in mathematical contexts, we prefer to speak
of {\it problems}.

To appreciate the difference between problems and propositions, let us note firstly that the problem
requesting to find a proof of a proposition $P$ is closely related to both (i) the proposition
asserting that $P$ is true; and (ii) the proposition asserting that $P$ is provable.
These are not the same, of course, whenever ``proofs'' are taken to be in some formal theory $T$ and
``truth'' is taken according to some two-valued model $M$ of $T$, with respect to which $T$ is not
complete.%
\footnote{For instance, $T$ could consist of the axioms of planar geometry except for the axiom of
parallel lines, and $M$ could be the Euclidean planar geometry.
One could object that it would be fair to compare truth according to Euclidean geometry with proofs
in its complete theory; but then $T$ could be Peano Arithmetic or ZFC, which by G\"odel's theorem are
not complete with respect to any models. (See \cite{M0}*{\S\ref{int:conjectures}} for a more detailed
discussion.) }
There are other reasons why ``true'' should not be equated with ``provable''; for instance, they differ
also in the modal logic S4, where it is a simple consequence of the axioms that consistency is provable.%
\footnote{With respect to the internal notion of provability. As shown by Art\"emov \cite{Ar1}, the latter can
be modelled by the existence of proofs in Peano Arithmetic, where ``proofs'' have the usual meaning of
formal proofs (except that Art\"emov needs one ``proof'' to be able to prove several formulas),
but ``existence'' is understood in an explicit sense, not expressible internally in Peano Arithmetic.}
Conversely, the proposition asserting that two groups $G$ and $H$ are isomorphic is closely related to both
(i) the problem requesting to prove that $G$ and $H$ are isomorphic; and (ii) the problem requesting to
construct an isomorphism between $G$ and $H$.
These are generally not the same because one proof that an isomorphism exists might represent
several distinct isomorphisms or no specific isomorphism.

The logical distinction between problems and theorems (as they appear, in particular, in Euclid's
{\it Elements}) has been articulated at length by a number of ancient Greek geometers in response to
others who disputed it.
A detailed review of what the ancients had to say on this matter is included in the third part of this paper
\cite{M3}.
In modern times, the distinction was emphasized by Kolmogorov \cite{Kol}:

\smallskip
\begin{center}
\parbox{14.7cm}{\small
``On a par with theoretical logic, which systematizes schemes of
proofs of theoretical truths, one can systematize schemes of
solutions of problems --- for example, of geometric construction
problems.
For instance, similarly to the principle of syllogism we have
the following principle here:
{\it If we can reduce solving $b$ to solving $a$, and
solving $c$ to solving $b$, then we can also reduce solving $c$ to
solving $a$.}

Upon introducing appropriate notation, one can specify the rules of
a formal calculus that yield a symbolic construction of a system of
such problem solving schemes.
Thus, in addition to theoretical logic, a certain new
{\it calculus of problems} arises.
In this setting there is no need for any special, e.g.\ intuitionistic,
epistemic presuppositions.

The following striking fact holds: {\it The calculus of problems
coincides in form with Brouwer's intuitionistic logic,
as recently formalized by Mr.\ Heyting}.

In the second section we undertake a critical analysis of intuitionistic logic, accepting
general intuitionistic presuppositions;
and observe that intuitionistic logic should be replaced with the calculus of problems,
since its objects are in reality not theoretical propositions but
rather problems.''
}
\end{center}
\medskip

A key difference between problems and propositions is that the notion
of truth for propositions has no direct analogue for problems, so that
problems cannot be asserted.
For instance, let $\Gamma$ be the problem {\it Divide any given angle
into three equal parts with compass and (unmarked) ruler}.
Then $\Gamma\lor\neg\Gamma$ reads, {\it Divide
any given angle into three equal parts with compass and ruler or prove
that it is impossible to do so} (cf.\ \cite{M0}*{\S\ref{int:about-bhk}} and
\ref{insolubility} below).
This is not a trivial problem; indeed, its solution took a couple of
millennia.
Even now that a solution is well-known, the problem still makes perfect
sense: the law of excluded middle would not help a student to solve this
problem on an exam (in Galois theory).
By citing the law of excluded middle she could solve another problem:
{\it Prove that either $\Gamma$ has a solution or $\Gamma$ has no solutions;} in symbols,
$$\oc(\wn\Gamma\lor\neg\wn\Gamma),$$
where $\wn\Gamma$ denotes the proposition {\it There exists a solution of the problem
$\Gamma$}, and $\oc P$ denotes the problem {\it Prove the proposition $P$}.%
\footnote{Let us explain the notation.
A proposition comes with a question whether it is true or false;
whereas a problem comes with an urge to solve it.
Thus $\wn$ can serve as a concise typing symbol for propositions, and
$\oc$ for problems.
By placing a typing symbol in front of a sentence we indicate its
conversion into the corresponding type.}
In fact, this problem is strictly easier than $$\oc\wn\Gamma\lor\oc\neg\wn\Gamma$$
(in words, {\it Prove or disprove that $\Gamma$ has a solution}), which requires a justified explicit choice.
But the latter problem, which can be written equivalently as $\oc\wn\Gamma\lor\neg\Gamma$,
is still strictly easier than the original problem,
$\Gamma\lor\neg\Gamma$, for it is generally easier to prove that some problem has a solution
than to actually solve it.

\subsection{A joint logic}
The present paper is devoted to the study of the logical operators $\wn$
and $\oc$ in a formal setting.
Like in the previous example, $\oc$ is meant to refer to {\it non-constructive} proofs, whereas
$\wn$ is understood to signify {\it explicit} existence.
We extract axioms and rules governing the use of $\wn$ and $\oc$ essentially from
two sources:
\begin{itemize}
\item The problem interpretation of intuitionistic logic.
This is essentially Kolmogorov's 1932 explanation of the intuitionistic connectives \cite{Kol}, which had
some parallels with the independent writings of Heyting (1931), and was slightly refined by Heyting (1934).
A disguised form of this explanation, often incorporating a further refinement by Kreisel, has come to be
known as the BHK interpretation of intuitionistic logic (see \cite{M0}*{\S\ref{int:intro}} for a detailed
review and discussion).
We include Kreisel's addendum in the following form, also found in the ancient commentary by Proclus
on Euclid's Elements (see \cite{M3}): A solution of a problem must include not only a construction,
but also the verification, i.e.\ a proof that the construction meets the requirements specified in
the problem (see \cite{M0}*{\S\ref{int:about-bhk}} for a discussion of this principle).
\item The proof interpretation of intuitionistic logic.
This is essentially the meaning explanation of intuitionistic logic given independently by Orlov (1928)
and Heyting (1930, 31) (see a detailed review in \S\ref{letters1}), which was partially formalized in
G\"odel's 1933 translation of problems into modal propositions (see \cite{M0}*{\S\ref{int:provability}}),
and further clarified by G\"odel's proof-relevant analogue of S4 (see \S\ref{Goedel}).%
\footnote{The fact that the Orlov--Heyting--G\"odel proof interpretation is substantially different from
the Kolmogorov--Heyting--Kreisel problem interpretation seems to have been never properly recognized,
except that G\"odel's paper formalizing the Orlov--Heyting interpretation begins with a reference
to Kolmogorov's ``somewhat different interpretation ... [given] without, to be sure,
specifying a precise formalism'' \cite{Goe2} (see also \cite{Tr99}*{p.\ 235}).
A certain precise formalism attempting to capture Kolmogorov's interpretation alone is specified in
\cite{M0}*{\S\ref{int:weak BHK}}.}
\end{itemize}

It then comes as a little surprise that the resulting axioms and rules harbor
a great deal of unintended symmetries, and are also compatible with Kolmogorov's
double negation translation of propositions into problems (reviewed
briefly in \cite{M0}*{\S\ref{int:negneg}}).
(For a different connection between Kolmogorov's and G\"odel's translations see \cite{D86}.)
What is most surprising, however, is that nobody seems to have studied
the operators $\wn$ and $\oc$ before, apart from hints of an abandoned
project aimed at a similar study, found in Kolmogorov's own writings.
In his 1931 letter to Heyting \cite{Kol2}, Kolmogorov wrote:

\smallskip
\begin{center}
\parbox{14.7cm}{\small
Each `proposition'
in your framework belongs, in my view, to one of two sorts:
\begin{itemize}
\item[($\alpha$)] $p$ expresses hope that in prescribed circumstances,
a certain experiment will always produce a specified result.
(For example, that an attempt to represent an even number $n$
as a sum of two primes will succeed upon exhausting all pairs
$(p,q)$, $p<n$, $q<n$.\footnotemark)
Of course, every ``experiment'' must be realizable by a finite
number of deterministic operations.
\item[($\beta$)] $p$ expresses intention to find a certain construction.
\end{itemize}
[...] I prefer to keep the name {\it proposition} (Aussage) only for
propositions of type ($\alpha$) and to call ``propositions'' of
type ($\beta$) simply {\it problems} (Aufgaben). Associated to
a {\it proposition} $p$ are the {\it problems} $\Neg p$ (to derive
contradiction from $p$) and $\Plus p$ (to prove $p$).
}\footnotetext{This $p$ (prime number) is unrelated to the previous $p$ (proposition).}
\end{center}
\smallskip

Apart from this fragment and the quote in the abstract, there are only a few further hints
at how Kolmogorov envisaged the connection between problems and propositions.
Several problems consisting in proving a proposition are also mentioned in Kolmogorov's paper
\cite{Kol}.
There is also a bit more in Kolmogorov's letters to Heyting, which will be thoroughly reviewed
in \S\ref{letters2}.
There we note, in particular, that while Kolmogorov's propositions of type ($\beta$) seem to
stand precisely for the objects of intuitionistic logic, his propositions of type ($\alpha$)
could not be intended to exhaust all objects of classical logic; in fact, it appears
that they can be identified with the ``stable propositions'' of \S\ref{stable and decidable}.
Another apparent divergence between Kolmogorov's remarks and our approach is noted in
\ref{insolubility} and discussed more thoroughly in \cite{M0}*{\S\ref{int:about-bhk}}.

The joint logic of problems and propositions that is constructed in the present paper is
presumably very unnatural in the standard
constructivist paradigm (of Brouwer and Heyting) that views intuitionistic logic as an alternative to
classical logic that criminalizes some of its principles.
We work in the other paradigm (of Kolmogorov), which views intuitionistic logic as
an extension package that upgrades classical logic without removing it.
For us, the main purpose of this upgrade is solution-relevance (=``proof-relevance''), or
``categorification''.
Thus from the viewpoint of the BHK semantics, topological (Tarski) models are in fact models of
a ``squashed'' copy of intuitionistic logic --- whose existence is only revealed with the aid of
the new connectives $\oc$ and $\wn$ (see \S\ref{nabla} below); whereas ``true'' models of
the genuine intuitionistic logic are the (solution-relevant) sheaf-valued models of \cite{M0}
(a special case of ``categorical models'' --- not to be confused with the usual ``sheaf models''
of intuitionistic logic).
Models of the joint logic of problems and propositions will be discussed in \cite{M3}.

\subsection{Double negation translation}

Speaking of ``intuitionistic logic as an extension package that upgrades classical logic
without removing it'', we run into the natural question: ``Wait, but what about the double negation
translation?''
Indeed, there is a version of the double negation translation that redefines classical connectives in
terms of intuitionistic ones and introduces no other modifications to formulas (see
\cite{M0}*{\S\ref{int:negneg}}).
However, this syntactic translation fails to reflect actual mathematical practice.
There are several levels at which this failure occurs:

(i) In the words of Kreisel \cite{Kr73}, ``there is a {\it good} reason why
mathematicians neglect'' the double negation translation, in the form of ``replacing $\exists$ by
$\neg\forall\neg$ and $p\lor q$ by $\neg(\neg p\lor\neg q)$'', ``namely, this: {\it For the sense
in which mathematicians actually understand the propositions of mathematical practice, ...
the difference between $\exists$ and $\lor$ on the one hand and their translations on the other
... is not significant}''.
``Put differently, they do not understand the intuitionistic meaning of $\forall$ and $\neg$ which
makes the [double negation] translation significant.''

This is not merely a matter of mathematicians' conventions, psychology or ignorance.
For mathematicians to be serious about the intuitionistic meaning of propositions, in the tradition of Brouwer
and Heyting, they would have to sacrifice their understanding of mathematical objects as ideal entities existing
independently of one's knowledge about them.
But most of them certainly do not want to be ``expelled from the paradise that Cantor has created'', and for
a good reason: the customary mental aid of Platonism does simplify their job immensely.

(ii) Kolmogorov's problem interpretation of intuitionistic logic entirely avoids the issue of sacrificing
platonist thinking.
But then the double negation translation makes no sense, because, when understood in these terms,
it conflates problems with
propositions; and when corrected so as to respect their distinction, it is no longer
a translation into plain intuitionistic logic.
This ``corrected'' double negation translation (see \S\ref{diamond}) is, actually, quite meaningful from
the viewpoint of
mathematical practice; for instance, $\exists x\,P(x)$, ``there exists an $x$ such that $P(x)$'' is
interpreted as $\neg\wn\neg\exists x\,\oc P(x)$, ``it is impossible to derive a contradiction from
a construction of an $x$ along with a proof of $P(x)$''.
The ``corrected'' double negation translation is essentially
equivalent to Fitting's translation of classical logic into the modal logic QS4.

(iii)
Even though the ``corrected'' double negation translation is no longer a translation into
plain intuitionistic logic, one might still ask if its effect is significant from the viewpoint of
mathematical practice.
The assertion that its effect is trivial is equivalent (see \cite{M2}*{\ref{g2:nabla-K}}) to the so-called
K-principle, $\neg\oc P\to\oc\neg P$, an independent principle of the joint logic of problems and 
propositions.
But the effect of the K-principle is drastic: it immediately rules out independent statements
(see \cite{M0}*{\S\ref{int:BHK-to}} and \cite{M2}*{\S\ref{g2:H-K}}).

\subsection{Related work} \label{related work}

Modern literature contains a number of attempts to blend classical and intuitionistic logics.
On the one hand, there are the Linear Logic and the logics of Japaridze \cite{Ja0},
\cite{Ja1}, \cite{Ja2}, \cite{Ja3} and Liang--Miller \cite{LM1}, \cite{LM2}, which all have
something classical and something intuitionistic in them --- albeit fused in far more elaborate ways
than Kolmogorov could have possibly meant in his words: ``A unified logical apparatus was intended
to be created, which would deal with objects of two types --- propositions and problems'' \cite{Kol3}.%
\footnote{This is a literal translation; the meaning of ``Предполагалось создание ...'' is inherently
ambiguous, and could well be either ``I intended to create ...'' or ``We intended to create with
my colleagues ...'' or ``I intended a student to create ...''.}

On the other hand, there is Art\"emov's Logic of Proofs LP, which he actually meant to address
these very words of Kolmogorov \cite{Ar1}*{p.\ 2}.
It can be said to deal with objects of two types --- propositions and their {\it proofs}; so, it does not 
exactly fit Kolmogorov's description.
In a paper in progress the author studies a proof-relevant extension of the joint logic of problems
and propositions which includes a variation of Art\"emov's LP.

What is more obviously related to Kolmogorov's research program is the ``propositions-as-{\it some}-types''
paradigm, and indeed our composite operator $\oc\wn$ on problems is very similar to the squash/bracket
operator in intuitionistic type theories (see \S\ref{Galois subsection} and \ref{Russell-Prawitz}).
There are also similarities between our approach and some ideas behind the Calculus of Constructions
\cite{Coq} (see also \cite{AG} and \cite{BL}).

A direct type-theoretic analogue of our $\wn$ due to Aczel and Gambino \cite{AG}*{\S1.3} is dissimilar to
$\wn$ in that it satisfies a reversible analogue of our schema ($\wnto$) (see \S\ref{deductive}).
In contrast, the reversibility of our ($\wnto$) would amount to allowing the BHK interpretation
to represent arbitrary, and not just constructive functions (see \cite{M0}*{\S\ref{int:confusion}}).
But there is nothing surprising here, since Aczel and Gambino do not assume the principle of excluded middle
on either the two sides.

A type-theoretic analogue of our $\oc$ due to Coquand \cite{Coq}*{\S1} satisfies an analogue of our
schema ($\oc_\forall$) (see \S\ref{symmetry}), which Coquand argues to express ``Heyting's semantics of
the universal quantification''.
This time we see a full agreement on the syntactic level; but it is remarkable that our formalization of
the BHK clause for the universal quantification is not ($\oc_\forall$), which is reversible just like its
Coquand's version, but ($\wn_\forall$) (see \S\ref{deductive}), which is irreversible for the same
reasons as ($\wnto$).

\section{QHC calculus} \label{QHC}

In the present series of papers we work in first-order logic, but with some deviations from standard terminology,
notation and conventions.
Namely, our basic syntactic setup is the meta-logic of \cite{M0}*{\S\ref{int:formal}}, which is a slightly 
simplified and ``mathematicized'' version of the meta-logic used in the {\tt Isabelle} proof-checker.
The simplification is mostly concerned with omission of features that are not needed for dealing with
first-order logics (without equality).
To be precise, in the present series of papers we use the straightforward extension of the setup in 
\cite{M0}*{\S\ref{int:formal}} to the case of first-order logic with many-sorted predicate variables.

The following includes a quick summary of \cite{M0}*{\S\ref{int:formal}} which should suffice for the reader who 
is familiar with some conventional treatments of first-order logic as well as simply-typed $\lambda$-calculus 
and natural deduction.

\subsection{Simply-typed $\lambda$-calculus} \label{lambda-calculus}
The language in which our logic and its meta-logic are formulated is the simply-typed $\lambda$-calculus with
(binary) products, with \cite{M0} and the present series of papers taking the following deviations from
standard terminology, notation and conventions.

\begin{itemize}

\item The word ``term'' is used in the sense of first order logic, and consequently we speak of
{\it $\lambda$-expressions} to refer to terms in the sense of $\lambda$-calculus.
The word ``arity'' is used is the traditional sense (of logic and mathematics), and consequently we speak
of {\it types} (rather than arities) of $\lambda$-expressions.
The word ``closed'' (as e.g.\ in ``closed formula'') is used in the sense of first-order logic, so we refer
to $\lambda$-expressions that are closed in the sense of $\lambda$-calculus as {\it $\lambda$-closed} ones.

\item Abstraction is written in the style of mathematics, as $x\mapsto T$, and not in the style of logic
and computer science, $\lambda x.T$.
Function application is normally written as $F(T)$ and only in some cases abbreviated as $FT$.
The function type is denoted $\Gamma\too\Delta$; no associativity conventions for $\too$ and $\mapsto$ are
assumed.

\item We omit brackets in iterated products using standard isomorphisms, and use tuples $(T_1,\dots,T_n)$,
also written $\vec T$, which are defined recursively in terms of pairs.
Projection on the $i$th factor of a product is denoted $\pr_i$.
We also use multivariable abstraction $x_1,\dots,x_n\mapsto T$, which is defined recursively in terms of
abstraction and tuples (not just up to $\alpha$-equivalence; see \cite{M0}*{\S\ref{int:simultaneous}}).

\item Substitution is denoted $S|_{x:=T}$ and is undefined whenever some variable is captured.
The same goes for the simultaneous substitution $S|_{\vec x:=\vec T}$.
A $\lambda$-expression of the form $(\vec x\mapsto S)(\vec T)$ may $\beta\eta$-reduce beyond $S|_{\vec x:=\vec T}$;
if such a $\beta\eta$-reduction involves no $\alpha$-conversions, and its result is in $\beta\eta$-normal form,
then this resulting $\lambda$-expression is denoted $S[\vec x/\vec T]$, and the tuple $\vec T$ is called
{\it free for $\vec x$ in S} (see \cite{M0}*{\S\ref{int:free substitution}}).

\item The variables of a type $\Gamma$ are denoted $\var_1^\Gamma,\var_2^\Gamma,\dots$.
We generally use lowercase letters to write metavariables for variables and constants, and uppercase letters
to write metavariables for arbitrary $\lambda$-expressions.
\end{itemize}

\subsection{Language of QHC}

QHC is a first-order logic without equality, whose predicate variables are of two sorts.
To describe its language, we need three basic types:

\begin{itemize}
\item $\0$, the type of terms;
\item $\1_i$, the type of i-formulas (``i'' stands for ``intuitionistic'');
\item $\1_c$, the type of c-formulas (``c'' stands for ``classical'').
\end{itemize}

The language of QHC consists of the following sets of typed $\lambda$-expressions
(variables and constants only), where $n$ ranges over $\N=\{0,1,2,\dots\}$:

\begin{itemize}
\item[(1)] the set of variables of type $\0$, called {\it individual variables};
\item[(2$_n$)] the set of variables of type $\0^n\too\1_c$, called {\it $n$-ary predicate variables};
\item[(3$_n$)] the set of variables of type $\0^n\too\1_i$, called {\it $n$-ary problem variables}.
\end{itemize}

Each of the sets (1), (2$_n$), (3$_n$) is a countably infinite set.
Nullary predicate variables are also called {\it propositional variables}.
For reasons of readability we will also use the alternative spelling $\tr{a,b,c,\dots,x,y,z}$ for the first
26 individual variables $\var_1^\0,\dots,\var_{26}^\0$, reserving an upright sans-serif font for this purpose.
Similarly, we use the abbreviations $\fm {a,b,c,\dots,x,y,z}$ for the first 26 predicate variables of each arity
and $\fm {\alpha,\beta,\gamma,\dots,\chi,\psi,\omega}$ for the first 24 problem variables of each arity,
reserving a fancy (Euler) upright serif font for this purpose.

In using predicate and problem variables we follow the tradition of classic texts in first-order logic such as
those by Hilbert--Ackermann, Hilbert--Bernays, Church and P. S. Novikov, who did include predicate variables
in addition to predicate constants.
Modern treatments of first-order logic usually do not include predicate variables in the language, and are content
with predicate constants (even though they include propositional variables in the language of propositional logic).
In fact, it is clear that the language of a logic in reality contains only predicate variables, whereas
predicate constants are chosen differently for each theory over the logic, and so actually belong to
the language of a theory and not to the language of the logic.

The sets (1)--(3$_n$) are common to any two-sorted first-order logic.
Specific to QHC are the following constants.
{\it Connectives}:

\begin{itemize}
\item[(4)] truth and falsity $\cltop,\clbot:\1_c$;
\item[(5)] classical negation $\neg:\1_c\too\1_c$;
\item[(6)] classical binary connectives $\land,\lor,\to,\tofrom:\1_c\x\1_c\too\1_c$;
\item[(7)] triviality and absurdity $\triv,\ab:\1_i$;
\item[(8)] intuitionistic negation $\neg:\1_i\too\1_i$;
\item[(9)] intuitionistic binary connectives $\land,\lor,\to,\tofrom:\1_i\x\1_i\too\1_i$,
\end{itemize}

\noindent
{\it quantifiers}:

\begin{itemize}
\item[(10)] classical quantifiers $\forall,\exists:(\0\too\1_c)\too\1_c$;
\item[(11)] intuitionistic quantifiers $\forall,\exists:(\0\too\1_i)\too\1_i$,
\end{itemize}

\noindent
and {\it conversion operators}:

\begin{itemize}
\item[(12)] $\oc:\1_c\too\1_i$;
\item[(13)] $\wn:\1_i\too\1_c$.
\end{itemize}

Some of the connectives and quantifiers are ``syntactic sugar'', i.e.\ they should not really be on
the above list as they are definable in terms of others.
Namely, the intuitionistic $\tofrom$, $\neg$ and $\triv$ are definable in terms of the intuitionistic
$\land,\lor,\to$ and $\ab$; and the classical $\tofrom$, $\land$, $\lor$, $\neg$ and $\cltop$ are
definable in terms of the classical $\to$ and $\clbot$, and the classical $\exists$ is definable in terms
of the classical $\forall$, $\to$ and $\clbot$.
However it is convenient to regard all these symbols (4)--(11), including the redundant ones,
as ``connectives'' and ``quantifiers''.

It should be noted that we do not differentiate graphically between classical connectives/quantifiers
and intuitionistic ones, since they can be distinguished by the type of the $\lambda$-expressions
that they act upon ($\1_c$ or $\1_i$) --- except for the nullary connectives, which we do take care
to differentiate (classical: $\cltop,\clbot$; intuitionistic: $\triv,\ab$).
This is based on the observation that lowercase Greek letters, which we use to denote problem variables,
are visually distinct from lowercase Roman letters, which we use to denote predicate variables.
Note, however, the difference between $\to$ (classical or intuitionistic implication) and $\too$ (function type).

If $\frak q$ is a quantifier, $A$ is a $\lambda$-expression of type $\1_c$ or $\1_i$, and $x$ is
an individual variable, then $\frak qx\, A$ abbreviates the $\lambda$-expression $\frak q(x\mapsto A)$.
More generally, $\frak q\vec x\, A$ abbreviates $\frak q(\vec x\mapsto A)$.
Due to this abbreviation, $\lambda$-abstraction is only implicit in formulas.

\begin{remark} In the preceding paragraph, $A$ is a metavariable that stands for an arbitrary unknown
$\lambda$-expression of type $\1_c$ or $\1_i$.
Accordingly, the symbol ``$A$'' can be read in two ways: as the first Roman uppercase letter or as the first 
Greek uppercase letter.
We will use uppercase letters that are unambiguously Greek (from the viewpoint of \TeX) to write metavariables 
that stand unambiguously for a $\lambda$-expression of type $\1_i$, and those unambiguously Roman for
$\lambda$-expressions of type $\1_c$.    
\end{remark}

This completes the description of the pure language of QHC.
However, the language $\L$ of a theory over QHC (such as the plane geometry of \cite{M3}) may additionally contain
the following sets:

\begin{itemize}
\item[(14$_n$)] a finite set of constants of type $\0^n\too\0$, called {\it $n$-ary function symbols};
\item[(15$_n$)] a finite set of constants of type $\0^n\too\1_c$, called {\it $n$-ary predicate constants};
\item[(16$_n$)] a finite set of constants of type $\0^n\too\1_i$, called {\it $n$-ary problem constants}.
\end{itemize}

It should be noted that nullary predicate and problem constants are the same kind of $\lambda$-expressions as
nullary connectives (i.e., constants of types $\1_c$ and $\1_i$).
It is nevertheless convenient to distinguish them, since the latter belong to the pure language of QHC but
the former do not.

{\it Terms} of the language $\L$ are defined inductively, as built out of individual variables using
the function symbols.
Thus not every $\lambda$-expression of type $\0$ is a term (for example, no term involves $\lambda$-abstraction).
An {\it atomic c-formula} of $\L$ is a $\lambda$-expression of type $\1_c$ obtained by applying either
an $n$-ary predicate constant or an $n$-ary predicate variable to an $n$-tuple of terms; an {\it atomic
i-formula} is a $\lambda$-expression of type $\1_i$ obtained by applying either an $n$-ary problem constant or
an $n$-ary problem variable to an $n$-tuple of terms.
A {\it formula} of $\L$ is a $\lambda$-expression built out of atomic c-formulas and i-formulas
using the connectives, quantifiers and conversion operators.

A formula of type $\1_c$ is called a {\it c-formula} and a formula of type $\1_i$ is called an {\it i-formula}.
(Clearly, every formula is either a c-formula or an i-formula.)

A {\it purely classical formula} is a $\lambda$-expression of type $\1_c$ built out of atomic c-formulas using
classical connectives and classical quantifiers only; a {\it purely intuitionistic formula} is 
a $\lambda$-expression of type $\1_i$ built out of atomic i-formulas using intuitionistic connectives 
and intuitionistic quantifiers only.

A $\lambda$-expression of the form $x_1,\dots,x_n\mapsto F$, where $F$ is a formula and $x_1,\dots,x_n$ are
pairwise distinct individual variables, is called an {\it $n$-formula}.
It can also be called an {\it $n$-c-formula} or an {\it $n$-i-formula} if $F$ is a c-formula or an i-formula.

\subsection{Meta-logic}  \label{meta-logic-section}

\subsubsection{Introduction}
Any kind of literature on first-order logic constantly deals with meta-logical concepts and assertions,
but usually only implicitly.
Why would one want to make them explicit, and discuss a first-order logic in terms of a formal meta-logic?
One reason is that a pedantic verbalist, who ignores the implicit, must perceive the hidden meta-logic
as an ever-present conflation and ambiguity.
Here are two examples.

\begin{example}
The literature on first-order classical and intuitionistic logics is accustomed to speaking of
``the syntactic consequence''; but the syntactic consequence in the sense of e.g.\ the textbooks by Schoefield 
and Mendelson is inequivalent to the syntactic consequence in the sense of e.g.\ the textbooks by Church, 
Enderton, Kolmogorov--Dragalin and Troelstra--van Dalen.
Moreover, Kleene and Avron have considered the two notions simultaneously, as well as the corresponding
notions of semantic consequence, pointing out that both are commonly used in elementary mathematics.

Kleene's textbook contains the following example: the arithmetical formula $(x+y)^2=x^2+2xy+y^2$ begs to be
understood as an identity (valid for all natural numbers $x$), whereas the arithmetical formula $x^2+2=3x$
begs to be understood as an equation (i.e., as a condition on $x$).
There is no special syntax to reflect this obvious distinction in meaning.
Yet it is not illusory, as it is reflected in use.
For, as noted by Avron, when ``dealing with identities [...] the substitution rule is available,
and one may infer $\sin x=2\sin\frac x2\cos\frac x2$ from the identity $\sin 2x=2\sin x\cos x$.
In contrast, [...] substituting $\frac x2$ for $x$ everywhere in an equation is an error'' (see
references in \cite{M0}*{\S\ref{int:formal}}).

In fact, the difference between the two variants of syntactic consequence is due to the implicit presence
of a first-order meta-quantifier in one of them.
\end{example}

\begin{example} \label{principles vs schemata}
In intuitionistic logic, the principle of excluded middle is derivable from the double negation principle
(due to the derivability of the schema $\neg\neg(\gamma\lor\neg\gamma)$).
Nevertheless, the schema $\alpha\lor\neg\alpha$ expressing the principle of excluded middle is not derivable
from the schema $\neg\neg\alpha\to\alpha$ expressing the double negation principle (since for $\alpha=\neg\beta$
the latter is derivable, and the former is not).
Thus the widespread practice of expressing principles by schemata is sometimes misleading.

In fact, the difference between principles and schemata is due to the implicit presence of a second-order
meta-quantifier in principles.

But, actually, the explicit use of the second-order meta-quantifier makes the whole concept of schemata
(i.e., the formal use of metavariables for this purpose) superfluous.
Let us recall that early textbooks on first-order logic, such as those of Hilbert--Ackermann, Hilbert--Bernays
and P. S. Novikov did not speak of any schemata, but only of formulas; instead, their derivation systems
included a substitution rule.
Some problems with this early approach are that inference rules were anyway stated in schematic form,
and also that the substitution rule is, in contrast to other inference rules, not structural (i.e.\ it is not
preserved itself by substitution without anonymous variables).
Non-structurality is a serious complication in trying to treat rules as fully formal objects.

In fact, the use of both first-order and second-order meta-quantifiers enables one to state (structural) rules
without using meta-variables; and one way to understand the substitution rule is that it is not an inference rule
of the logic, but an inference meta-rule of the meta-logic.
An advantage of this approach is that side conditions that normally occur in first-order logics, such as
``provided that $x$ is not free in $\alpha$'' or ``provided that $t$ is free for $x$ in $\alpha(x)$'' effectively
disappear (more precisely, they remain at the meta-level, but they disappear from what needs to be specified in
order to state rules and principles).
One consequence of not having to specify exactly which English phrases qualify as ``side conditions'' in rules
and principles is that it becomes feasible to give actual formal definitions of these notions 
(a rule and a principle) as well as further notions such as a derivable rule, an admissible rule, a first-order 
logic, and (both variants of) syntactic consequence.
\end{example}

\subsubsection{Meta-formulas} \label{meta-logic-language}
The language of the meta-logic%
\footnote{Not to be confused with the meta-language of a logic.
(This one would have to be formalized if we were to give a formal treatment of schemata.)}
of a two-sorted first-order logic involves, in addition to the basic types
$\0$, $\1_i$ and $\1_c$, a fourth basic type:

\begin{itemize}
\item $\m$, the type of meta-formulas;
\end{itemize}

\noindent
and consists of the following constants (common to all two-sorted first-order logics).
{\it Reflection operators}:

\begin{itemize}
\item $\Rt_i:\1_i\too\m$, the i-reflection;
\item $\Rt_c:\1_c\too\m$, the c-reflection,
\end{itemize}

\noindent
{\it meta-connectives}:

\begin{itemize}
\item $\mand:\m\x\m\too\m$, the meta-conjunction;
\item $\imp:\m\x\m\too\m$, the meta-implication,
\end{itemize}

\noindent
and {\it meta-quantifiers}

\begin{itemize}
\item $\q:(\0\too\m)\too\m$, the first-order (universal) meta-quantifier;
\item $\q^n_i:((\0^n\too\1_i)\too\m)\too\m$, the $n$-ary second-order (universal) i-meta-quantifier;
\item $\q^n_c:((\0^n\too\1_c)\too\m)\too\m$, the $n$-ary second-order (universal) c-meta-quantifier.
\end{itemize}

Here $n$ ranges over $\N=\{0,1,2,\dots\}$.
In practice, meta-quantifiers are written like the old-style (early 20th century) universal quantifiers, but
with fancy parentheses $\mq{\cdot}$ so as to avoid visual confusion with the ordinary parentheses $(\cdot)$:
if $\frak q:(\Delta\too\m)\too\m$ is a meta-quantifier (either of them), $\F$ is a $\lambda$-expression of
type $\m$, and $x$ is a variable of type $\Delta$, then $\mq{x}\,\F$ abbreviates the $\lambda$-expression
$\frak q(x\mapsto \F)$.
More generally, $\mq{x_0,\dots,x_n}\,\F$ abbreviates $\mq{x_0}(\mq{x_1,\dots,x_n}\,\F)$.

An {\it atomic meta-formula} is a $\lambda$-expression of type $\m$ that is either of the form $\Rt_c F$,
where $F$ is a c-formula, or of the form $\Rt_i\Phi$, where $\Phi$ is an i-formula.
A {\it meta-formula} is a $\lambda$-expression of type $\m$ built out of atomic meta-formulas using
meta-connectives and meta-quantifiers.
We usually omit $\Rt_c$ and $\Rt_i$ in writing $\lambda$-expressions of type $\m$;
thus atomic meta-formulas are effectively identified with formulas, keeping in mind that meta-connectives and
meta-quantifiers cannot be used inside of formulas.

As usual, $\F\iff\G$ abbreviates $(\F\imp\G)\mand(\G\imp\F)$; ``$\iff$'' is called {\it meta-equivalence}.
We stick to the following order of precedence of logical and meta-logical symbols (in groups of equal
priority, starting with higher precedence/stronger binding):
\begin{enumerate}
\item $\oc$, $\wn$, $\neg$, $\exists$ and $\forall$;
\item $\land$ and $\lor$;
\item $\to$ and $\tofrom$;
\item $\mq{\cdot}$;
\item $\&$;
\item $\imp$ and $\iff$.
\end{enumerate}

\subsubsection{Meta-rules} \label{meta-logic-rules}
The inference meta-rules (i.e., the inference rules of the meta-logic) are the $\alpha$-conversion rule
for meta-formulas:

\begin{center}
$\Dfrac{\begin{matrix}\vdots\\ \F\end{matrix}}{\G}$, if $\F$ is $\alpha$-equivalent to $\G$,
\end{center}
\bigskip

\noindent
and the usual introduction/elimination rules of natural deduction for $\mand$, $\imp$ and the meta-quantifiers:

\begin{center}
$\Dfrac{\begin{matrix}\vdots\\ \F\end{matrix}\qquad
\begin{matrix}\vdots\\ \G\end{matrix}}{\F\mand\G}\qquad\qquad
\Dfrac{\begin{matrix}\vdots\\ \F\mand\G\end{matrix}}{\F}\qquad\qquad
\Dfrac{\begin{matrix}\vdots\\ \F\mand\G\end{matrix}}{\G}\qquad\qquad
\Dfrac{\begin{matrix}\vdots\\ \F\end{matrix}\qquad
\begin{matrix}\vdots\\ \F\imp\G\end{matrix}}{\G}\qquad\qquad
\Dfrac{\begin{matrix}[\F]\\ \vdots\\ \G\end{matrix}}{\F\imp\G}$,
\end{center}
\bigskip

\noindent
where $\F$ and $\G$ are meta-formulas;

\begin{center}
$\Dfrac{\begin{matrix}\vdots\\ \F\end{matrix}}{\mq{x}\F}$,
provided that $x$ does not occur freely in any of the assumptions;
\end{center}
\begin{center}
$\Dfrac{\begin{matrix}\vdots\\ \mq{x}\F\end{matrix}}{\F[x/T]}$,
provided that $T$ is free for $x$ in $\F$,
\end{center}
\bigskip

\noindent
where $\F$ is a meta-formula, and there are three ways to read $x$ and $T$:

\begin{enumerate}
\item $x$ is an individual variable and $T$ is a term;
\item $x$ is an $n$-ary problem variable and $T$ is an $n$-i-formula;
\item $x$ is an $n$-ary predicate variable, $T$ is an $n$-c-formula.
\end{enumerate}

It should be noted that $\F[x/T]$ boils down to the ordinary substitution $\F|_{x:=T}$ of $\lambda$-calculus
in the case (1), but not in the cases (2), (3) (see \S\ref{lambda-calculus} above).

Let us note that by using a {\it meta-specialization} (=meta-quantifier elimination meta-rule) immediately after
the corresponding {\it meta-generalization} (=meta-quantifier introduction meta-rule), we get the meta-rules
of substitution:

\begin{center}
$\Dfrac{\begin{matrix}\vdots\\ \F\end{matrix}}{\F[x/T]}$,
as long as $x$ does not occur freely in the assumptions and $T$ is free for $x$ in $\F$.
\end{center}
\bigskip

A meta-formula $\F$ is called {\it deducible} if using the meta-rules one can obtain (from the trivial deductions,
in which a meta-formula is deduced from itself) a deduction of $\F$ from no assumptions.

\subsubsection{Syntactic meta-sugar}

The {\it first-order meta-closure} $\mc{1}\F$ of the meta-formula $\F$ is $\mq{\vec x}\F$,
where $\vec x$ is the tuple of all individual variables occurring freely in $\F$.
The {\it second-order meta-closure} $\mc{2}\F$ is $\mq{\vec\gamma}\F$,
where $\vec\gamma$ is the tuple of all predicate and problem variables occurring freely in $\F$.

A {\it rule}, written $ A_1,\dots, A_m/ B$, or, in more detail,
\[\frac{ A_1,\dots, A_m}{ B},\]
where $A_1,\dots,A_m$ and $B$ are formulas, is an abbreviation for the meta-formula
\[\mc{2}\Big(\mc{1} A_1\,\mand\dots\mand\,\mc{1} A_m\Imp \mc{1} B\Big).\]
The formulas $ A_1,\dots, A_m$ are called the {\it premisses} of the rule, and $ B$ its
{\it conclusion}.

If $B$ is a formula (and only in this case) we abbreviate $\mc{2}\mc{1} B$ by $\prin B$.
A meta-formula of the form $\prin B$, where $B$ is a formula, is called a {\it principle}.
In other words, a principle is a formula that is meta-quantified over all its free (individual,
predicate and problem) variables.
Rules with no premisses can be identified with principles, in the sense that each meta-formula of the form
$(\,/ B)\iff\prin B$ is deducible, as long as the empty meta-conjunction is defined as an abbreviation
of some deducible meta-formula (for example, $\mq\gamma\gamma\imp\mq\gamma\gamma$).

The difference between formulas and principles is clear from Example \ref{principles vs schemata}:
in (the meta-logical extension of) intuitionistic logic, the meta-formula
\[\fm{\prin\neg\neg\alpha\to\alpha\imp\prin\alpha\lor\neg\alpha}\]
is deducible, whereas the meta-formula
\[\fm{\neg\neg\alpha\to\alpha\imp\alpha\lor\neg\alpha}\]
is not deducible.

A {\it derivation system}%
\footnote{Also called a ``deductive system'' in the literature.
For our purposes it is convenient to distinguish meta-logical {\it deductions} from {\it derivations}
in a specific logic.}
$\Ds$ is a meta-formula of the form \[\H_1\mand\dots\mand\H_k,\]
where each $\H_i$ is a rule (possibly with no premisses) in the pure language of QHC.
The $\H_i$ with no premisses, or rather the corresponding principles, are called the {\it laws},
and the $\H_i$ with at least one premise are called the {\it inference rules}.

A {\it logic} is a meta-equivalence class of derivation systems.
In other words, derivation systems $\Ds$ and $\Ds'$ are said to {\it determine the same logic} if
the meta-formula $\Ds\iff\Ds'$ is deducible.

A meta-formula $\F$ is called {\it derivable} in the logic determined by a derivation system $\Ds$
if the meta-formula $\Ds\imp\F$ is deducible (in the meta-logic).
Clearly, adding a derivable principle or rule to a derivation system $\Ds$
does not affect derivability of principles and rules in the logic determined by $\Ds$.

If $L$ is the logic determined by a derivation system $\Ds$, we denote by $\turnstile\F$,
or in more detail $\turnstile _L\F$, the judgement that the meta-formula $\F$ is derivable in the logic.
The meta-meta-logical symbol $\turnstile$ is set to have lower priority than all logical and meta-logical
symbols.
The judgement $\turnstile \F_1\mand\dots\mand\F_m\imp\G$ is also abbreviated by
\[\F_1,\dots,\F_m\turnstile\G.\]
When this judgement is true, we also say that $\G$ is a {\it (syntactic) consequence} of the $\F_i$.
This yields two notions of syntactic consequence for formulas:
$ A_1,\dots, A_m\turnstile B$ is the traditional ``fixed variables'' one, as in the textbooks by
Church, Troelstra and van Dalen; whereas $\mc{1} A_1,\dots,\mc{1} A_m\turnstile \mc{1} B$
is the traditional ``varied variables'' one, as in the textbooks by Schoenfield and Mendelson.
There seems to be no standard notation for the judgement of interderivability for formulas:
$$\turnstile A\iff B$$
so we will keep it in this form. Let us note that, due to the absence of the deduction theorem in QHC, 
it is weaker than the (object-level) equivalence (which makes sense when both if $A$ and $B$ are either  
i-formulas or c-formulas),
$$\turnstile A\tofrom B,$$
but stronger than the equivalence of principles,
$$\turnstile\prin A\iff\prin B,$$
which is in turn stronger than the equivalence of judgements:
$$\turnstile A\text{ if and only if }\turnstile B.$$

\subsection{Derivation system}\label{deductive}

When writing down a derivation system for a new logic, one has to engage in informal considerations,
or else risk the new logic being entirely unmotivated.

To provide an informal mathematical meaning to the judgements of QHC, we interpret c-formulas
by propositions/predicates and i-formulas by problems.
More precisely, we instantiate predicate variables and problem variables by particular mathematical predicates
and problems.
Upon such instantiation, classical connectives and quantifiers are interpreted according to the usual truth tables;
intuitionistic connectives and quantifiers according to the BHK interpretation, in Kolmogorov's problem solving 
terminology (see below); and the conversion operators $\oc$ and $\wn$ are interpreted as in \S\ref{intro}.
The interpretation of the meta-logical constants and judgements will be discussed in part II.

Some laws and inference rules of the QHC calculus are immediate:

\begin{itemize}
\item All laws and inference rules of classical predicate logic (see \cite{M0}*{\S\ref{int:logics}}).
\item All laws and inference rules of intuitionistic first-order logic (see \cite{M0}*{\S\ref{int:logics}}).
\end{itemize}

Let us note that by using the substitution (i.e., meta-generalization followed by meta-specialization)
we can apply the classical laws and inference rules to arbitrary i-formulas (possibly involving $\wn$ and $\oc$)
and the intuitionistic laws and inference rules to all c-formulas (possibly involving $\wn$ and $\oc$).

We will now discuss the remaining part of the derivation system.

\subsubsection{From the problem interpretation} \label{deductive1}
Let us recall Kolmogorov's problem interpretation of intuitionistic logic \cite{Kol} (with minor improvements 
largely due to Heyting; see \cite{M0}*{\S\ref{int:BHK}, \S\ref{int:about-bhk}} for further details).%
\footnote{This can also be understood as the BHK interpretation presented in Kolmogorov's language.
However, given that Heyting's early ideas are often conflated with the BHK interpretation in the literature,
but will be understood in a very different way below, as providing a {\it complement} to the BHK interpretation, 
one must be very careful here about exactly what is meant by the ``BHK interpetation''.}

We fix a prescribed class of specific problems, which may have parameters that run over a fixed domain $D$.
These are our {\it primitive} problems, and we assume that it is known what is a solution of each primitive problem 
for each value of the parameters.
For instance, Euclid's first three postulates are the following primitive problems:

(1) draw a straight line segment from a given point to a given point;

(2) extend any given straight line segment continuously to a longer one;

(3) draw a circle with a given center and a given radius.

\noindent
We may thus stipulate that each of (1) and (3) has a unique solution, and describe all possible solutions of (2).
(Euclid's {\it Elements} is discussed in some detail in part III of the present series, \cite{M3}.)

Composite problems are obtained from the primitive ones by using connectives $\land$, $\lor$, $\to$, 
$\neg$, $\ab$ and quantifiers $\forall$, $\exists$.
What it is a solution of a composite problem is explained as follows:

\begin{itemize}
\item a solution of $\Gamma\land\Delta$ consists of a solution
of $\Gamma$ and a solution of $\Delta$;

\item a solution of $\Gamma\lor\Delta$ consists of an explicit choice
between $\Gamma$ and $\Delta$ along with a solution of the chosen
problem;

\item a solution of $\Gamma\to\Delta$ is a {\it reduction} of $\Delta$ to
$\Gamma$; that is, a general method of solving $\Delta$ on the basis
of any given solution of $\Gamma$;

\item the {\it absurdity} $\ab$ has no solutions;
$\neg\Gamma$ is an abbreviation for $\Gamma\to\ab$;

\item a solution of $\exists x\, \Theta(x)$ is a solution of $\Theta(x_0)$
for some explicitly chosen $x_0\in D$;

\item a solution of $\forall x\, \Theta(x)$ is a general method
of solving $\Theta(x_0)$ for all $x_0\in D$.
\end{itemize}

A key element here is the notion of a {\it general method} (roughly corresponding to the notion of ``construction'' 
advocated by Brouwer and Heyting), which Kolmogorov further explains as follows.
If $\Gamma(\script X)$ is a problem depending on the parameter $\script X$ ``of any sort'', then 
``to present a general method of solving $\Gamma(\script X)$ for every particular value of $\script X$''
should be understood as ``to be able to solve $\Gamma(\script X_0)$ for every given specific value of 
$\script X_0$ of the variable $\script X$ by a finite sequence of steps, known in advance (i.e.\ before 
the choice of $\script X_0$)''.

Let us observe that if $|\Gamma|$ denotes the set of solutions of the problem $\Gamma$, then the above clauses 
guarantee that:
\begin{itemize}
\item $|\Gamma\land\Delta|$ is the product $|\Gamma|\x|\Delta|$;
\item $|\Gamma\lor\Delta|$ is the disjoint union $|\Gamma|\sqcup|\Delta|$;
\item there is a map $\script F\:|\Gamma\to\Delta|\to\Hom(|\Gamma|,|\Delta|)$ into the set of all maps;
\item $|\ab|=\emptyset$;
\item $|\exists x\,\Theta(x)|$ is the disjoint union $\bigsqcup_{d\in D} |\Theta(d)|$;
\item there is a map $\script G\:|\forall x\,\Theta(x)|\to\prod_{d\in D} |\Theta(d)|$ into the product.
\end{itemize}

Now the proposition ``$\Gamma$ has a solution'' can be rephrased as ``$|\Gamma|\neq\emptyset$''.
It follows that the following propositions must be true for any contentful problems $\Gamma$, $\Delta$ 
and any contentful parametric problem $\Theta$:

\begin{itemize}
\item $\wn(\Gamma\land\Delta)\Tofrom\wn\Gamma\land\wn\Delta$;
\item $\wn(\Gamma\lor\Delta)\Tofrom\wn\Gamma\lor\wn\Delta$;
\item $\wn(\Gamma\to\Delta)\To(\wn\Gamma\to\wn\Delta)$;
\item $\neg\wn\ab$;
\item $\wn\exists x\,\Theta(x)\Tofrom\exists x\,\wn\Theta(x)$;
\item $\wn\forall x\,\Theta(x)\To\forall x\,\wn\Theta(x)$.
\end{itemize}

See \cite{M0}*{\S\ref{int:about-bhk}} for a more thorough discussion of these propositions.

This motivates some laws of QHC (beware that some of these will turn out to be redundant):

\formulas
\medskip
\begin{enumerate}
\item[($\wn_\land$)] $\prin\wn(\gamma\land\delta)\Tofrom \wn\gamma\land\wn\delta$;
\item[($\wn_\lor$)] $\prin\wn(\gamma\lor\delta)\Tofrom \wn\gamma\lor\wn\delta$;
\item[($\wnto$)] $\prin\wn(\gamma\to \delta)\To (\wn\gamma\to\wn\delta)$;
\item[($\wn_\bot$)] $\neg\wn\ab$;
\item[($\wn_\exists$)] $\prin\wn\exists\tr x\,\theta(\tr x)\Tofrom \exists\tr x\,\wn\theta(\tr x)$;
\item[($\wn_\forall$)] $\prin\wn\forall\tr x\,\theta(\tr x)\To\forall\tr x\,\wn\theta(\tr x)$.
\end{enumerate}
\medskip
\metameta

It should be noted that formulas with almost same appearance and motivation, but somewhat different meaning
appear in \cite{M0}*{\S\ref{int:weak BHK}}.

Informally, ($\wn_\bot$) is saying that $\ab$ is not just the hardest problem (as guaranteed by the
explosion principle, $\fm{\prin\ab\to\gamma}$), but a problem that has no solutions whatsoever.
This is just the first example of how some content found in the BHK interpretation and not entirely
captured in the usual formalization of intuitionistic logic is more fully captured in QHC.

Some versions of the BHK interpretation include the well-known principle (see \cite{M0}*{\S\ref{int:about-bhk}}),
that every solution of a problem $\Gamma$ must be supplied with a proof that is it indeed a solution of 
$\Gamma$.
This principle was emphasized by G. Kreisel in connection with interpreting intuitionistic logic (in a somewhat
different form) and also by the ancient Greeks, particularly Proclus, in the context of geometric construction
problems, which as we now know can be seen as a model of intuitionistic logic (see \cite{M3}).
This Proclus--Kreisel principle is usually considered to be relevant when one tries to make sense out the BHK
interpretation in the context of first-order logic, rather than a constructive type theory (see references in
\cite{M0}*{\S\ref{int:about-bhk}}).

A consequence of this Proclus--Kreisel principle is that a solution of a problem $\Gamma$ yields a proof of
the existence of a solution of $\Gamma$.
This is expressible in the language of QHC:

\begin{enumerate}
\smallskip
\item[($\oc\wn$)] $\fm{\prin\gamma\to \oc\wn\gamma}$.
\end{enumerate}
\smallskip

\subsubsection{From the proof interpretation}\label{Goedel}
The remaining part of the derivation system is motivated by the proof interpretation of intuitionistic logic,
given independently by Orlov and Heyting (see details in \S\ref{letters1}) and partially formalized in
G\"odel's translation
of intuitionistic logic into classical modal logic S4 (see \cite{M0}*{\S\ref{int:provability}}).
A remarkable attempt to clarify the informal notion of ``proof'' used by Orlov and Heyting occurs in
G\"odel's sketch of a proof-relevant analogue of S4, which is found in his outline of a 1938 lecture,
published posthumously in his collected works \cite{Goe1}.

G\"odel's proposal is based on a ternary relation ``$zBp,q$, that is, $z$ is a derivation of $q$ from $p$''.
But as a matter of fact he also uses a binary relation ``$aBq$'' which is presumably meant to abbreviate
$aB\cltop,q$.
Here $B$ stands for German {\it Beweis} (proof), and apparently refers to proofs ``understood not in a particular
system, but in the absolute sense (that is, one can make it evident)'' (these words of G\"odel appears earlier on
the same page).
G\"odel's axioms for $B$ are as follows (literally):
\smallskip

\begin{enumerate}
\item ``$zB\phi(x,y)\To \phi(x,y)$'';
\item ``$uBv\To u'B(uBv)$'';
\item ``$zBp,q\mand uBq,r\To f(z,u)Bp,r$'';
\item ``if $q$ has been proved and $a$ is the proof, [then] $aBq$ is to be written down''.
\end{enumerate}
\medskip

Instead of attempting to clarify the meaning of this in G\"odel's original terms, 
let us consider a very similar but more clearly described logic.
Namely, let S4pr be the extension of classical predicate logic with the following 
additional elements of the language:

\begin{itemize}
\item an operator \,$\suchthat$\, associating to every formula $F$ and every term $t$
a formula $t\suchthat F$;
\item a unary function $'$ that associates to every term $t$ a term $t'$;
\item a binary function $[\cdot]$ that associates to every two terms $s,t$ a term $s[t]$;
\item an operator $*$ that associates to every formula $F$ a term $*_F$,
\end{itemize}

\noindent
and the following additional laws and inference rules:

\formulas
\begin{enumerate}[label=(\roman*)]
\item $\prin\tr t\suchthat p\To p$;
\item $\prin\tr t\suchthat p\To\tr t'\suchthat (\tr t\suchthat p)$;
\item $\prin\tr s\suchthat (p\to q)\To (\tr t\suchthat p\to\tr s[\tr t]\suchthat q)$;
\smallskip
\item $\dfrac{p}{*_p\suchthat p}$.
\end{enumerate}
\medskip
\metameta

S. Art\"emov discovered that a further extension of S4pr by an additional function (``sum of proofs'')
and an additional law (not hinted at in any way by G\"odel) is indeed a proof-relevant
analogue of S4 in a sense one could expect \cite{Ar1}.%
\footnote{In fact, the rule $\fm{p/*_p}$ is only applied to axioms in Art\"emov's logic.
The reason why one cannot do without the ``sum of proofs'' is clear from \cite{Ar1}*{Example 5.6}.}
But we do not need these for our motivational purposes.

\formulas
The logic S4pr has the following derived principles and rules:
\begin{enumerate}
\smallskip
\item[(i$'$)] $\neg (\tr t\suchthat\clbot)$;
\item[(i$''$)] $\prin \exists \tr t\ \tr t\suchthat p\To p$;
\smallskip
\item[(i$'''$)] $\dfrac{\tr t\suchthat p}p$;
\smallskip
\item[(ii$'$)] $\prin\tr t\suchthat p\To\tilde{\tr t}\suchthat (\exists\tr t\ \tr t\suchthat p)$.
\end{enumerate}
\smallskip

Here (i$'$) is just the special case of (i) with $p$ substituted by the classical falsity $\clbot$.
Next, (i$''$) is derived from (i) by using two inference rules of classical logic:
$q(t)\,/\,\forall t\, q(t)$ and $\forall t\, (r(t)\to p)\,/\,\exists t\, r(t)\to p$.
Of course, (i$'''$) is derived from (i) using the {\it modus ponens} rule.
To establish (ii$'$), let us first note that from the classical law $q(t)\to\exists t\,q(t)$ we get
$t\suchthat p\to\exists t\ t\suchthat p$, and if $\frak F$ denotes the latter formula, then
by (iv) we get $*_{\frak F}\suchthat(t\suchthat p\to\exists t\ t\suchthat p)$.
Now from (iii) and the {\it modus ponens} rule we get
$t'\suchthat(t\suchthat p)\to *_{\frak F}[t']\suchthat(\exists t\ t\suchthat p)$.
Finally, (ii$'$) follows from this and (ii), if we set $\tilde t=*_{\frak F}[t']$.
\metameta

Just like G\"odel's proofs ``in the absolute sense'', the ``proofs'' of propositions referred to in
the intended reading of the problem $\oc P$, {\it Find a proof of $P$}, are not supposed to be
formal proofs.
In the language of QHC, we have the following direct analogues of (i$'$), (i$''$), (ii$'$), (iii), (iv)
and (i$'''$):

\formulas
\smallskip
\begin{enumerate}
\item[($\oc_\bot$)] $\neg\oc\clbot$;
\item[($\wn\oc$)] $\prin\wn\oc p\to p$;
\item[($\oc\wn'$)] $\prin\oc p\to \oc\wn\oc p$;
\item[($\octo$)] $\prin\oc(p\to q)\To(\oc p\to \oc q)$;
\medskip
\item[($\oc_\top$)] $\dfrac{p}{\oc p}$;
\medskip
\item[($\oc_\top'$)] $\dfrac{\oc p}{p}$.
\medskip
\end{enumerate}
Here ($\oc_\bot$) is a kind of internal soundness: a proof of falsity leads to absurdity.
Semantically (informally), this is pretty much like in G\"odel's system; but let us note that (i$'$)
is a c-formula ($\tr t\suchthat\clbot\to\clbot$), whereas ($\oc_\bot$) is an i-formula
($\oc\clbot\to\ab$).
In contrast, ($\wn_\bot$) is a c-formula ($\wn\ab\to\clbot$).
Note that by the explosion principle, the reverse implications to ($\wn_\bot$) and ($\oc_\bot$) are trivial.
Thus ($\wn_\bot$) identifies the classical falsity, $\clbot$, with the proposition ``$\ab$ has a solution'';
and ($\oc_\bot$) identifies the intuitionistic absurdity, $\ab$, with the problem ``Prove $\clbot$''.

This completes the list of additional inference rules and laws of QHC.
Let us note that ($\oc\wn'$) can be dropped from this list since it follows immediately from
($\wn\oc$), ($\oc_\top$) and ($\octo$).
Some other laws will be shown to be redundant in \ref{symmetry}.
\metameta

\section{Symmetries and redundancy}\label{Galois subsection}

\subsection{Galois connection}

\formulas
\begin{proposition} The inference rule {\rm ($\oc_\top'$)} is equivalent to the following inference rule:
\begin{enumerate}
\item[\rm ($\wn_\top$)] $\dfrac{\gamma}{\wn\gamma}$.
\end{enumerate}
\end{proposition}

We will see in \cite{M2} that the converse rule, $\wn\gamma\,/\,\gamma$, is not derivable in QHC.

\begin{proof} Given ($\oc_\top'$), we can derive ($\wn_\top$) using ($\oc\wn$):
$\gamma,\,\gamma\to\oc\wn\gamma\,/\,\oc\wn\gamma$ and $\oc\wn\gamma\,/\,\wn\gamma$.
Conversely, given ($\wn_\top$), we can derive ($\oc_\top'$) using ($\wn\oc$):
$\oc p\,/\,\wn\oc p$ and $\wn\oc p,\,\wn\oc p\to p\,/\,p$.
\end{proof}
\metameta

The equivalence relations $\turnstile\Phi\tofrom\Psi$ on i-formulas and $\turnstile F\tofrom G$
on c-formulas yield the ``Lindenbaum'' poset of equivalence classes of i-formulas, ordered by 
$[\Phi]\ge[\Psi]$ if $\turnstile\Phi\to\Psi$, and the ``Lindenbaum'' poset of equivalence classes of 
c-formulas, ordered by $[F]\ge[G]$ if $\turnstile F\to G$.
By ($\wn_\top$) and ($\wnto$), and respectively ($\oc_\top$) and ($\octo$) we have:
\begin{itemize}
\item $\turnstile\Phi\to\Psi$ implies $\turnstile\wn\Phi\to \wn\Psi$;
\item $\turnstile F\to G$ implies $\turnstile\oc F\to \oc G$.
\end{itemize}
Thus $\wn$ and $\oc$ descend to monotone maps between the two posets.
Using the monotonicity of $\wn$ and $\oc$ and substitution, from ($\wn\oc$) and ($\oc\wn$) we also obtain:
\begin{itemize}
\item $\turnstile\oc\wn\oc F\tofrom\oc G$;
\item $\turnstile\wn\oc\wn\Phi\tofrom\wn\Psi$.
\end{itemize}
These identities resemble well-known properties of a Galois connection.
Indeed, it turns out that our two monotone maps do form a Galois connection between the two Lindenbaum posets:

\begin{theorem}\label{Galois} For an i-formula $\Phi$ and a c-formula $F$,
$\turnstile\wn\Phi\to F$ if and only if $\turnstile\Phi\to\oc F$.
\end{theorem}

The same argument works to prove a slightly stronger assertion, $\fm{\turnstile\wn\alpha\to p\iff\alpha\to\oc p}$.

\begin{proof} If $\turnstile\Phi\to \oc F$, then $\turnstile\wn\Phi\to \wn\oc F$. 
So from ($\wn\oc$) we get $\turnstile\wn\Phi\to F$.

Conversely, if $\turnstile\wn\Phi\to F$, then $\turnstile\oc\wn\Phi\to\oc F$.
So from ($\oc\wn$) we get $\turnstile\Phi\to\oc F$.
\end{proof}

Another standard fact on Galois connections takes the following form in our situation.

\begin{corollary} \label{sup-inf}
Let $F$ denote a c-formula and let $\Phi$ denote an i-formula.

(a) $[\oc F]$ is the least among all $[\Phi]$ such that $[\wn\Phi]$ is an upper bound of $[F]$;
and $[\wn\Phi]$ is the greatest among all $[F]$ such that $[\oc F]$ is a lower bound of $[F]$.

(b) $[\wn\oc F]$ is the least of all upper bounds of $[F]$ of the form $[\wn\Phi]$; 
and $[\oc\wn\Phi]$ is the greatest of all lower bounds of $[\Phi]$ of the form $[\oc F]$.
\end{corollary}

\begin{proof} The first assertion of (a) says that $\turnstile\wn\oc F\to F$, and if $\turnstile\wn\Phi\to F$, 
then $\turnstile\Phi\to \oc F$.
This is indeed so by ($\wn\oc$) and by \ref{Galois}.
The first assertion of (b) says that $\turnstile\wn\oc F\to F$, and if $\turnstile\wn\Phi\to F$, then 
$\turnstile\wn\Phi\to\wn\oc F$.
This follows similarly, using additionally the monotonicity of $\wn$.
The second assertions of (a) and (b) are proved similarly.
\end{proof}

\subsection{Modalities} \label{modalities}
Let us write $\Box F$ for the c-formula $\wn\oc F$, and $\nabla\Phi$ for the i-formula $\oc\wn\Phi$.
Upon substituting problems and propositions for the atoms of $F$ and $\Phi$, these are interpreted 
by the proposition $\Box P$, ``There exists a proof of $P$'', and the problem $\nabla\Gamma$, 
``Prove that $\Gamma$ has a solution''.
By another standard fact on Galois connections, the ``provability''
operator $\Box=\wn\oc$ descends to an interior operator (in the sense of order theory) on the poset of
equivalence classes of c-formulas, whereas the ``solubility'' operator
$\nabla=\oc\wn$ descends to a closure operator (in the same sense) on the poset of equivalence
classes of i-formulas.
In the case of $\Box$, this amounts to (i) the derivability in QHC of the principles
\formulas
\begin{itemize}
\item[($1^\Box$)] $\prin\Box p\to p$;
\item[($2^\Box$)] $\prin\Box p\to\Box\Box p$;
\end{itemize}
and (ii) the judgement
\begin{itemize}
\item[($*$)] $\mm{\turnstile F\to G}$ implies $\mm{\turnstile\Box F\to\Box G}$.
\end{itemize}
These are easy to verify directly: ($1^\Box$) is the same as ($\wn\oc$);
($2^\Box$) follows from ($\oc\wn$) and the monotonicity of $\wn$;
and ($*$) follows from the monotonicity of $\oc$ and $\wn$.

In fact, ($*$) is a consequence of the derivability in QHC of the following principle and rule:
\begin{itemize}
\item[($3^\Box$)] $p\,/\,\Box p$;
\item[($4^\Box$)] $\prin\Box(p\to q)\To(\Box p\to\Box q)$;
\end{itemize}
Here ($3^\Box$) follows from ($\oc_\top$) and ($\wn_\top$),
and ($4^\Box$) from ($\octo$) and ($\wnto$).
We have proved

\begin{proposition} Sending $\Box$ to $\wn\oc$ yields a syntactic interpretation of QS4 in QHC, which 
is the identity on QC.
\end{proposition}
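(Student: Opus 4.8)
The plan is to make precise what a \emph{syntactic interpretation} of QS4 in QHC is and then verify it by induction on derivations. I would define a translation sending each formula $\phi$ of QS4 to a proposition $\phi^{\natural}$ of QHC by recursion on structure: it is the identity on atoms, commutes with the propositional connectives and the quantifiers, and sends $\Box\psi$ to $\wn\oc\,\psi^{\natural}$ (on schemata it acts accordingly on metavariables and commutes with substitution). A $\Box$-free formula is then left unchanged, which is exactly the assertion that the interpretation is the identity on QC; so it remains to show that $\turnstile_{\mathrm{QS4}}\phi$ implies $\turnstile\phi^{\natural}$ in QHC.

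Next I would run the induction on the QS4-derivation, treating each primary law and each inference rule. Because QHC contains all axiom schemes and inference rules of classical predicate logic applied to \emph{every} proposition --- in particular to propositions of the form $\psi^{\natural}$ --- the translation of any classical axiom is again a substitution instance of a classical axiom, hence a theorem of QHC, and the translations of \emph{modus ponens} and of the generalization rule are again \emph{modus ponens} and generalization in QHC. For the modal part, using that $(1^\Box)$, $(2^\Box)$, $(4^\Box)$ hold for an arbitrary proposition and $(3^\Box)$ is applicable to an arbitrary proposition: the translation of the scheme $\Box p\to p$ is the instance $\wn\oc\,p^{\natural}\to p^{\natural}$ of $(1^\Box)$; the translation of $\Box p\to\Box\Box p$ is the instance $\wn\oc\,p^{\natural}\to\wn\oc\wn\oc\,p^{\natural}$ of $(2^\Box)$; the translation of $\Box(p\to q)\to(\Box p\to\Box q)$ is an instance of $(4^\Box)$; and the necessitation rule $p\,/\,\Box p$ translates to $p^{\natural}\,/\,\wn\oc\,p^{\natural}$, which is $(3^\Box)$. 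Closure of QHC-provability under these carries the induction through, and the substance of the statement is thus already contained in the identities $(1^\Box)$--$(4^\Box)$ verified above.

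I expect the only genuine point requiring care --- rather than an obstacle --- to be pinning down the exact first-order system called QS4: one must check that, as in \cite{M0}*{\S\ref{int:S4}}, it is axiomatized over classical predicate logic by just the Lewis schemata K, T, 4 together with necessitation, with no extra quantifier--modality interaction postulated. If, say, a Barcan or converse Barcan scheme were included, its translation would have to be derived separately; the converse Barcan direction $\wn\oc\,\forall x\,\phi^{\natural}(x)\to\forall x\,\wn\oc\,\phi^{\natural}(x)$ is immediate from the monotonicity of $\oc$ and $\wn$ together with the fact that $x$ is not free in the antecedent, whereas the other direction is not obviously available and would need $(\oc_\forall)$-type input. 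With QS4 in its standard presentation this complication does not arise, and the remaining verification is the routine bookkeeping indicated above.
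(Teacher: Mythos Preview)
Your proposal is correct and is essentially the same argument as the paper's: the paper simply verifies that $\wn\oc$ satisfies the QS4 laws $(1^\Box)$--$(4^\Box)$ (via $(\wn\oc)$, $(\oc\wn)$ plus monotonicity of $\wn$, $(\oc_\top)$ and $(\wn_\top)$, and $(\octo)$ with $(\wnto)$ respectively), observes that the classical part is preserved automatically since QHC contains classical predicate logic applied to all propositions, and declares the proposition proved. Your explicit induction on derivations and your remark about Barcan schemata are reasonable elaborations, but add nothing beyond what the paper tacitly relies on.
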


We will see in \S\ref{Euler-Tarski1} that this interpretation is exact. Before we get there, we need 
to distinguish two roles of the symbol ``$\Box$'': the modality of QS4 and an abbreviation for $\wn\oc$ in QHC. 

Similarly, that $\nabla$ induces a closure operator on the poset of
equivalence classes of i-formulas translates to (i) the derivability in QHC of the principles
\begin{itemize}
\item[($1^\nabla$)] $\prin\alpha\to\nabla\alpha$;
\item[($2^\nabla$)] $\prin\nabla\nabla\alpha\to\nabla \alpha$,
\end{itemize}
and (ii) the judgement
\begin{itemize} \metameta
\item[($*$)] $\turnstile\Phi\to\Psi$ implies $\turnstile\nabla\Phi\to\nabla\Psi$.
\end{itemize}\formulas
Here ($*$) is a consequence of the derivability in QHC of the principle
\begin{itemize}
\item[($4^\nabla$)]
$\prin\nabla(\alpha\to \beta)\To(\nabla \alpha\to\nabla \beta)$,
\end{itemize}
which follows from ($\octo$) and ($\wnto$).
We also note that the following consequence of ($\wn_\bot$) and ($\oc_\bot$),
\begin{itemize}
\item[($3^\nabla$)] $\nabla\ab\to\ab$,
\end{itemize}
is equivalent (modulo ($1^\nabla$) and ($4^\nabla$)) to $\neg\alpha\,/\,\neg\nabla\alpha$ (cf.\ \S\ref{negation} 
below), which can be considered to be dual to ($3^\Box$).
We define QH4 to be the logic obtained from QH by adding a new unary connective $\nabla$ and 
additional laws ($1^\nabla$)--($4^\nabla$).
We have thus proved:

\begin{proposition} \label{QH4}
Sending $\nabla$ to $\oc\wn$ yields a syntactic interpretation of QH4 in QHC, which is the identity on QH.
\end{proposition}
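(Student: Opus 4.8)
The plan is to realize the interpretation as an explicit translation of formulas and then argue by induction on derivations. Let $\iota$ take each formula of QH4 --- all of which are problems --- to a problem of QHC by fixing every individual and problem variable, commuting with the intuitionistic connectives $\land,\lor,\to$, the constants $\top,\bot$ and the quantifiers $\exists,\forall$, and setting $\iota(\nabla\alpha)\bydef\oc\wn\,\iota(\alpha)$. Since $\iota$ does not change which individual variables occur free in a formula (and thus commutes with substitution of individual terms), it carries each instance of a schema of QH --- under the metavariable conventions in force --- to an instance of the same schema over the wider class of all QHC-problems. Moreover $\iota$ is the identity on $\nabla$-free formulas, i.e.\ on the formulas of QH, which already gives the second clause of the proposition.

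It then remains to show that if $\alpha$ is provable in QH4, then $\iota(\alpha)$ is provable in QHC, by induction on a QH4-derivation. For the postulate schemes and inference rules inherited from intuitionistic predicate logic --- which in QH4 are applied to arbitrary problems, possibly involving $\nabla$ --- the $\iota$-image of a derivation step is again an instance of the same postulate scheme or rule, now applied to QHC-problems, possibly involving $\wn$ and $\oc$; since QHC has exactly these among its primitive ingredients, the step translates to a legitimate QHC step. For the four postulate schemes of QH4 obtained from ($1^\nabla$)--($4^\nabla$) by replacing $\imp$ with $\to$, the $\iota$-image of an instance is precisely the corresponding instance --- with $\nabla$ unfolded to $\oc\wn$ --- of the judgements ($1^\nabla$)--($4^\nabla$) for the QHC-problems $\iota(\alpha)$ and, where relevant, $\iota(\beta)$; and these judgements were verified in \S\ref{Galois subsection}. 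This closes the induction, so $\iota$ is a syntactic interpretation of QH4 in QHC.

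The only point that needs care here is bookkeeping rather than mathematics: one must check that the metavariable conventions and side conditions line up so that a QH4 schema instance maps to a QHC schema instance of exactly the prescribed shape --- in particular for the quantifier rules, where the ``not free in'' and ``free for'' conditions survive precisely because $\iota$ leaves free individual variables untouched --- and that ``applying intuitionistic logic to all problems'' means the same on both sides once $\nabla$ is read as $\oc\wn$. With the properties of $\oc$ and $\wn$ already established in \S\ref{Galois subsection} in hand, there is no further obstacle.
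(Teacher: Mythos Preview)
Your proof is correct and follows essentially the same approach as the paper. The paper's own argument is the text immediately preceding the proposition: it verifies that ($1^\nabla$)--($4^\nabla$) hold in QHC with $\nabla=\oc\wn$ and then simply writes ``We have thus proved''; you make explicit the induction on derivations and the bookkeeping about substitution and free variables that the paper leaves implicit, but the content is the same.
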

\metameta

It should be noted that the laws of QH4 mimic some properties of $\neg\neg$.
In fact, by substituting $\neg\neg$ for $\nabla$ we get
an interpretation of QH4 in QH.
Indeed, under this substitution, ($1^\nabla$) holds
by (\cite{M0}*{\S\ref{int:tautologies}, (\ref{int:double negation})}), ($2^\nabla$) and ($3^\nabla$) follow
from (\cite{M0}*{\S\ref{int:tautologies}, (\ref{int:triple negation})}), and ($4^\nabla$) holds
by (\cite{M0}*{\S\ref{int:tautologies}, (\ref{int:neg-neg-imp})}).
Let us note that since the purely intuitionistic fragment of QH4 is fixed under this interpretation,
this fragment is precisely QH (in other words, QH4 is a conservative extension of QH).
We will see in \S\ref{Tarski-Kolmogorov1} that the constructed interpretation of QH4 in QH
factors through the interpretation of \ref{QH4}.

It is not clear to the author whether the interpretation of \ref{QH4} is faithful (in other words, whether
QHC is a conservative extension of QH4).%
\footnote{Recently A. Onoprienko \cite{On1}, \cite{On2} affirmatively answered this question 
(which appeared already in the first arXiv version of the present paper).}
Thus one should not conflate two potentially distinct roles of the symbol ``$\nabla$'': the modality of
QH4 and an abbreviation for $\oc\wn$ in QHC.

The modal logic QH4 was studied by Fairtlough--Walton \cite{FW}, who called it QLL$^+$, and Aczel \cite{Ac},
who called it the logic of a strict lax modality (see also \cite{FM}*{Theorem 4.5}).
Later the zero-order fragment H4 was also studied by Art\"emov and Protopopescu, who showed its completeness with 
respect to some Kripke models \cite{AP} (beware that H4, which is called IEL$^+$ in \cite{AP}, disappeared 
from the published version of the preprint \cite{AP}).
The intuitionistic modal logic given by the laws ($1^\nabla$), ($2^\nabla$) and ($4^\nabla$)
was studied as early as 1950 by H. Curry \cite{Cu1}*{p.\ 120} (see also \cite{Cu2}*{\S5}), and later by 
Goldblatt \cite{Gold}*{\S14.5} and many others.
In particular, categorical models of QH4 related to the sheaf-valued models of QH in \cite{M0} are known; 
see \cite{FM}, \cite{Gold2}*{\S7.6}, \cite{AMPR}.

The properties of $\nabla$ are also similar to those of the squash/bracket operator in dependent type theory
(see \cite{AB} and references there).

\subsection{Simplification}

\formulas
\begin{proposition} \label{symmetry}
(a) The laws {\rm ($\wn_\land$)}, {\rm ($\wn_\lor$)}, {\rm ($\wn_\bot$)},
{\rm ($\wn_\forall$)} and {\rm ($\wn_\exists$)} are redundant.

(b) The following holds in QHC:
\begin{enumerate}
\item[\rm ($\oc_\land$)] $\turnstile\oc p\land\oc q\Tofrom \oc(p\land q)$;
\item[\rm ($\oc_\lor$)] $\turnstile\oc p\lor \oc q\To \oc(p\lor q)$;
\item[\rm ($\oc_\forall$)] $\turnstile\forall\tr x\,\oc p(\tr x)\Tofrom \oc\forall \tr x\,p(\tr x)$;
\item[\rm ($\oc_\exists$)] $\turnstile\exists\tr x\,\oc p(\tr x)\To \oc\exists \tr x\,p(\tr x)$.
\end{enumerate}
\end{proposition}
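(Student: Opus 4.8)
The plan is to derive every item from the Galois connection of Theorem~\ref{Galois} together with the monotonicity of $\wn$ and $\oc$, the irreversible schemes ($\wnto$) and ($\octo$), and the residual schemes ($\oc\wn$), ($\wn\oc$), ($\oc_\bot$). The first thing to check is that none of these ingredients is among the five schemes of part~(a): Theorem~\ref{Galois} rests only on ($\wn\oc$), ($\oc\wn$), ($\wn_\top$), ($\oc_\top$), ($\wnto$) and ($\octo$), so invoking it here is not circular. Conceptually, parts~(a) and~(b) together say that the right adjoint $\wn$ carries the intuitionistic operators that are order-theoretic meets of problems ($\lor$, $\exists$, and the top element $\bot$ of the problem poset) to the corresponding meets of propositions, while the left adjoint $\oc$ carries the order-theoretic joins of propositions ($\land$, $\forall$, and the bottom element $1$) to joins of problems; the four ``$\oc$ versus $\lor$/$\exists$'' half-implications are then just monotonicity and do not reverse, whereas the full equivalences ($\wn_\land$) and ($\oc_\land$) require in addition the lax structure supplied by ($\wnto$) and ($\octo$), since there $\wn$ (a right adjoint) and $\oc$ are being asked to preserve a join they would not preserve on formal grounds alone.

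For part~(a): ($\wn_\forall$) is immediate, since $\forall x\,\alpha(x)\imp\alpha(x)$ gives $\wn\forall x\,\alpha(x)\imp\wn\alpha(x)$, to which $\forall$-introduction for propositions applies. For ($\wn_\land$) one direction is monotonicity; for the other I would start from the intuitionistic validity $\alpha\to(\beta\to\alpha\land\beta)$, apply monotonicity of $\wn$ and then ($\wnto$) to obtain $\wn\alpha\imp(\wn\beta\to\wn(\alpha\land\beta))$, and uncurry in classical logic. For ($\wn_\lor$), ($\wn_\exists$), ($\wn_\bot$) the easy half is again monotonicity (followed by $\lor$- resp.\ $\exists$-elimination, resp.\ explosion), while the substantial half is obtained by passing through the adjunction: e.g.\ $\wn(\alpha\lor\beta)\imp\wn\alpha\lor\wn\beta$ is, by Theorem~\ref{Galois}, equivalent to $\alpha\lor\beta\imp\oc(\wn\alpha\lor\wn\beta)$, which follows by $\lor$-elimination from $\alpha\imp\oc\wn\alpha\imp\oc(\wn\alpha\lor\wn\beta)$ (using ($\oc\wn$) and monotonicity of $\oc$) and the symmetric fact for $\beta$; ($\wn_\exists$) is the same argument with $\exists$-elimination in place of $\lor$-elimination, and ($\wn_\bot$) is the degenerate case, $\wn\bot\imp 0$ being equivalent by Theorem~\ref{Galois} to the explosion instance $\bot\imp\oc 0$ (alternatively, combine ($\oc_\bot$) with ($\wn\oc$)).

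For part~(b): ($\oc_\lor$) and ($\oc_\exists$) are pure monotonicity, followed by $\lor$- resp.\ $\exists$-elimination for problems. For ($\oc_\land$), one direction is monotonicity, and for the other I would dualize the ($\wn_\land$) argument: apply $\oc$ to the validity $p\to(q\to p\land q)$ and use ($\octo$) to get $\oc p\imp(\oc q\to\oc(p\land q))$, then uncurry in intuitionistic logic for problems (equivalently, note that $\oc$, being a left adjoint, preserves the binary join $p\land q$). For ($\oc_\forall$), the direction $\oc\forall x\,p(x)\imp\forall x\,\oc p(x)$ is monotonicity followed by $\forall$-introduction for problems; the reverse, $\forall x\,\oc p(x)\imp\oc\forall x\,p(x)$, is by Theorem~\ref{Galois} equivalent to $\wn\forall x\,\oc p(x)\imp\forall x\,p(x)$, and this follows from ($\wn_\forall$) --- now available from part~(a) --- which gives $\wn\forall x\,\oc p(x)\imp\forall x\,\wn\oc p(x)$, followed by ($\wn\oc$) under the quantifier.

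Each individual derivation is a short exercise in the ambient classical and intuitionistic calculi, with the side conditions on the quantifier steps supplied automatically by the metavariable conventions of \S\ref{deductive}. The only points that require real attention are the ``currying through ($\wnto$)/($\octo$)'' move used for ($\wn_\land$) and ($\oc_\land$), and the bookkeeping of dependencies: Theorem~\ref{Galois} must be invoked without appeal to any of the five schemes of~(a), and ($\oc_\forall$) in~(b) must be derived only after ($\wn_\forall$) has been established in~(a).
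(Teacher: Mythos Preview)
Your proposal is correct and follows essentially the same route as the paper: monotonicity for the easy halves, the currying-through-($\wnto$) trick for ($\wn_\land$), and the Galois connection (via ($\oc\wn$), ($\wn\oc$) and monotonicity of $\oc$, $\wn$) for the substantial halves of ($\wn_\lor$), ($\wn_\exists$), ($\wn_\bot$), ($\oc_\forall$). The one minor divergence is in the hard direction of ($\oc_\land$): the paper derives it via Galois from the already-established ($\wn_\land$) together with ($\wn\oc$), whereas you dualize the currying argument directly using ($\octo$); both are short and valid, and your observation that ($\oc_\land$) also falls out of ``left adjoints preserve joins'' is in fact what the paper's Galois argument amounts to.
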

\metameta

\begin{remark}
From the informal semantic viewpoint, the implication $\fm{\turnstile\oc p\lor \oc q\too \oc(p\lor q)}$ cannot 
be reversed.
Indeed, let $P$ be the proposition {\it $i^i$ is a rational number}
and $Q$ the proposition {\it $i^i$ is an irrational real number}.
The problem $\oc(P\lor Q)$ amounts to showing that $i^i$ is a real number.
This problem is trivial: $i^i=(e^{i\pi/2})^i=e^{-\pi/2}$.
On the other hand, the problem $\oc P\lor\oc Q$ amounts to
$\oc(P\lor Q)\land\Gamma$, where $\Gamma$ is the problem
{\it Determine whether $e^{-\pi/2}$ is rational or irrational}.
This is not an easy problem.%
\footnote{In fact, $i^i$ is transcendental by the Gelfond--Schneider theorem.}
This shows incidentally that one cannot get an exact interpretation of
classical logic in intuitionistic logic by just looking at problems of
the form $\oc P$, where $P$ is a proposition.
\end{remark}

\formulas
\begin{proof}[Proof. Redundancy of {\rm ($\wn_\forall$)}]
By an intuitionistic law, $\turnstile\forall \tr x\,\alpha(\tr x)\to\alpha(\tr t)$.
Then by ($\wn_\top$) and ($\wnto$), we get $\turnstile\wn\forall \tr x\,\alpha(\tr x)\to\wn\alpha(\tr t)$.
By the classical generalization rule, we obtain 
$\turnstile\forall\tr t\,[\wn\forall \tr x\,\alpha(\tr x)\to\wn\alpha(\tr t)]$.
By another intuitionistic rule, we infer that
$\turnstile\wn\forall \tr x\,\alpha(\tr x)\to\forall\tr t\,\wn\alpha(\tr t)$.
Now the variable can be renamed.
\end{proof}

\begin{proof}[Redundancy of {\rm ($\wn_\land$)}]
The $\to$ implication in ($\wn_\land$) is redundant similarly to the redundancy of ($\wn_\forall$).
Conversely, the intuitionistic validity $\alpha\land\beta\to\alpha\land\beta$
can be rewritten, by the exponential law, as $\alpha\to(\beta\to(\alpha\land\beta))$.
Then by ($\wn_\top$) and ($\wnto$) it follows that
$\turnstile\wn\alpha\to(\wn\beta\to\wn(\alpha\land\beta))$.
Again applying the exponential law, this time regarded as an inference rule of classical logic,
we obtain $\turnstile\wn\alpha\land\wn\beta\to\wn(\alpha\land\beta)$.
\end{proof}

\begin{proof}[Proof of {\rm ($\oc_\exists$)} and {\rm ($\oc_\lor$)}]
This is parallel to the redundancy of ($\wn_\forall$).
In more detail, by a classical principle, $\turnstile p(\tr t)\to\exists \tr x\,p(\tr x)$.
Then by ($\oc_\top$) and ($\octo$), we get $\turnstile\oc p(\tr t)\to\oc\exists \tr x\,p(\tr x)$.
By the generalization rule, we obtain $\turnstile\forall\tr t\,[\oc p(\tr t)\to\oc\exists\tr x\,p(\tr x)]$.
By another classical rule, we get $\turnstile\exists\tr t\,\oc p(\tr t)\to\oc\exists\tr x\,p(\tr x)$.
Now the variable can be renamed.
The case of ($\oc_\lor$) is similar.
\end{proof}

\begin{proof}[Redundancy of {\rm ($\wn_\exists$)} and {\rm ($\wn_\lor$)}]
The $\from$ implication in ($\wn_\exists$) is redundant similarly to the proof of ($\oc_\exists$)
or to the redundancy of ($\wn_\forall$).
Conversely, by the proof of ($\oc_\exists$) we have shown that
$\turnstile\exists\tr x\,\oc p(\tr x)\to \oc\exists\tr x\, p(\tr x)$ using only ($\oc_\top$), ($\octo$) and classical logic.
Substituting, we get $\turnstile\exists\tr x\,\oc\wn\alpha(\tr x)\to\oc\exists\tr x\,\wn\alpha(\tr x)$.
On the other hand, from ($\oc\wn$) it follows that 
$\turnstile\exists\tr x\,\alpha(\tr x)\to\exists\tr x\,\oc\wn\alpha(\tr x)$.
By combining the two implications we get $\turnstile\exists\tr x\,\alpha(\tr x)\to\oc\exists\tr x\,\wn\alpha(\tr x)$.
By the proof of \ref{Galois}, we obtain from this the $\to$ implication in ($\wn_\exists$), using only
($\wn_\top$), ($\wnto$) and ($\wn\oc$).
The case of ($\wn_\lor$) is similar.
\end{proof}

\begin{proof}[Proof of {\rm ($\oc_\forall$)} and {\rm ($\oc_\land$)}]
The $\from$ implication in ($\oc_\forall$) is proved similarly to the redundancy of ($\wn_\forall$).
The converse implication is parallel to the redundancy of ($\wn_\exists$).
In more detail, ($\wn_\forall$) implies $\turnstile\wn\forall\tr x\,\oc p(\tr x)\to\forall\tr x\,\wn\oc p(\tr x)$, and
it follows from ($\wn\oc$) that $\turnstile\forall\tr x\,\wn\oc p(\tr x)\to\forall\tr x\,p(\tr x)$.
Thus $\turnstile\wn\forall\tr x\,\oc p(\tr x)\to \forall\tr x\,p(\tr x)$, hence by \ref{Galois}
$\turnstile\forall\tr x\,\oc p(\tr x)\to \oc\forall\tr x\,p(\tr x)$.
The case of ($\oc_\land$) is similar, or alternatively can be treated similarly to
the redundancy of ($\wn_\land$).
\end{proof}

\begin{proof}[Redundancy of {\rm ($\wn_\bot$)}]
By the explosion principle, we have $\turnstile\ab\to\oc\clbot$.
Then by ($\wn_\top$) and ($\wnto$) we get $\turnstile\wn\ab\to\wn\oc \clbot$.
On the other hand, by ($\wn\oc$) we have $\turnstile\wn\oc \clbot\to \clbot$.
Composing the two implications, we obtain $\turnstile\wn\ab\to \clbot$.
\end{proof}

\begin{corollary} \label{simplified}
The meta-conjunction of the following meta-formulas is a deductive system for QHC.

\begin{itemize}
\item A deductive system for intuitionistic logic;
\item A deductive system for classical logic;
\smallskip
\item[{\rm ($\oc_\top$)}] $\dfrac{p}{\oc p};$
\smallskip
\item[{\rm ($\wn_\top$)}] $\dfrac{\alpha}{\wn\alpha}$;
\medskip
\item[{\rm ($\wn\oc$)}] $\prin\wn\oc p\to p$;
\item[{\rm ($\oc\wn$)}] $\prin\alpha\to\oc\wn\alpha$;
\item[{\rm ($\octo$)}] $\prin\oc(p\to q)\To(\oc p\to \oc q)$;
\item[{\rm ($\wnto$)}] $\prin\wn(\alpha\to \beta)\To (\wn\alpha\to\wn\beta)$;
\item[{\rm ($\oc_\bot$)}] $\neg\oc \clbot$.
\end{itemize}
\end{corollary}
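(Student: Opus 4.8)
The plan is to show that the calculus in the statement and the full QHC calculus of \S\ref{deductive} prove exactly the same judgements (of all four kinds). Since this mutual derivability is precisely what ``a deductive system for QHC'' means, that settles the Corollary. So the proof splits into two inclusions.

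One inclusion is immediate and I would dispatch it first: every primary law and every inference rule appearing in the list is also a primary law or inference rule of QHC. Indeed, beyond the rules of classical logic applied to propositions and of intuitionistic logic applied to problems, QHC has only the rules ($\oc_\top$) and ($\wn_\top$), both listed; and of the additional primary laws of QHC the list retains ($\wn\oc$), ($\oc\wn$), ($\octo$), ($\wnto$) and ($\oc_\bot$). Hence any derivation in the listed calculus is verbatim a derivation in QHC, so the listed calculus proves no more than QHC.

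For the converse I would check that every primary law of QHC \emph{not} on the list is already derivable in the listed calculus; then, replacing in an arbitrary QHC-derivation each appeal to such a law by its derivation in the listed calculus, one converts it into a derivation of the same judgement in the listed calculus. The omitted laws are exactly ($\wn_\land$), ($\wn_\lor$), ($\wn_\bot$), ($\wn_\exists$), ($\wn_\forall$), together with the schema $\oc p\to\oc\wn\oc p$, i.e.\ ($\oc\wn'$). The last is the instance $\alpha:=\oc p$ of ($\oc\wn$), hence trivially derivable. The first five are declared redundant by Proposition \ref{symmetry}(a); the essential point is that the proofs given there invoke nothing beyond the ingredients of the listed calculus. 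I would pass through those proofs and record the dependencies: the redundancies of ($\wn_\forall$) and ($\wn_\land$) and the easy halves of ($\wn_\exists$), ($\wn_\lor$) use only ($\wn_\top$), ($\wnto$), the (intuitionistic and classical) exponential laws and generalization; the hard halves of ($\wn_\exists$), ($\wn_\lor$) additionally use ($\oc\wn$), ($\wn\oc$) and the argument of Theorem \ref{Galois} --- whose ingredients are the monotonicity of $\wn$ (from ($\wn_\top$) and ($\wnto$)) together with ($\wn\oc$), all listed; and the redundancy of ($\wn_\bot$) uses only explosion, ($\wn_\top$), ($\wnto$) and ($\wn\oc$). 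This gives $\text{QHC}\vdash J\Rightarrow(\text{listed calculus})\vdash J$, completing the equivalence.

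The one point needing care --- the main obstacle, such as it is --- is the dependency bookkeeping in the converse: the proof of Proposition \ref{symmetry} derives ($\oc_\forall$) and ($\oc_\land$) using ($\wn_\forall$), which is itself one of the omitted laws, and derives the hard half of ($\wn_\exists$) by re-running the argument for ($\oc_\exists$). One must verify that every such appeal is to a fact \emph{already} established in the listed calculus, i.e.\ that the redundancy arguments admit a linear ordering with no circularity --- for instance ($\wn_\forall$), then ($\wn_\land$), then ($\oc_\exists$) and ($\oc_\lor$), then ($\wn_\exists$) and ($\wn_\lor$), then ($\oc_\forall$) and ($\oc_\land$), then ($\wn_\bot$) --- over the base stock ($\oc_\top$), ($\wn_\top$), ($\wn\oc$), ($\oc\wn$), ($\octo$), ($\wnto$), ($\oc_\bot$). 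A routine reading of the proof of \ref{symmetry} confirms that such an order exists, so there is nothing deeper to do: the Corollary is essentially a repackaging of Proposition \ref{symmetry}.
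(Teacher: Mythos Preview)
Your proposal is correct and is exactly the intended reading: the paper gives no separate proof of this Corollary, treating it as an immediate consequence of Proposition~\ref{symmetry}(a) together with the fact that ($\oc\wn'$) is the instance $\alpha=\oc p$ of ($\oc\wn$). Your explicit dependency bookkeeping---verifying that the redundancy proofs in \ref{symmetry} use only ingredients from the listed calculus and can be linearly ordered without circularity---is a welcome elaboration of what the paper leaves implicit.
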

\metameta

\subsection{Negation} \label{negation}

\formulas
\begin{proposition} \label{v*}
Some laws of QHC can be rewritten as follows.

(a) {\rm ($\wn_\bot$)} is equivalent, modulo {\rm ($\wnto$)} and {\rm ($\wn_\top$)},
to $\prin\wn\neg\alpha\to\neg\wn\alpha$ and to $\neg\alpha\,/\,\neg\wn\alpha$;

(b) {\rm ($\oc_\bot$)} is equivalent, modulo {\rm ($\octo$)} and {\rm ($\oc_\top$)},
to $\prin\oc\neg p\to \neg \oc p$ and to $\neg p\,/\,\neg\oc p$.
\end{proposition}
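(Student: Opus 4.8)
The plan is to prove each of (a) and (b) as a cycle of three implications linking the original schema (($\wn_\bot$), resp. ($\oc_\bot$)), the proposed law, and the proposed rule. Since (b) is the exact mirror image of (a) under the classical/intuitionistic duality --- interchange $\wn$ with $\oc$, ($\wnto$) with ($\octo$), ($\wn_\top$) with ($\oc_\top$), the constant $\bot$ with $0$, and the roles of problems and propositions --- I would write out (a) in detail and then indicate only the substitution that yields (b).

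For (a), first I would obtain the law $\wn\neg\alpha\imp\neg\wn\alpha$ from ($\wn_\bot$) and ($\wnto$). Unfolding $\neg\alpha$ as $\alpha\to\bot$, the instance $\wn(\alpha\to\bot)\imp(\wn\alpha\to\wn\bot)$ of ($\wnto$) composes, by classical propositional reasoning on propositions (the principle of syllogism, and monotonicity of $\to$ in its consequent), with $\wn\bot\imp 0$ --- which is precisely ($\wn_\bot$), since $\neg\wn\bot$ abbreviates $\wn\bot\to 0$ --- to give $\wn(\alpha\to\bot)\imp(\wn\alpha\to 0)$, i.e.\ $\wn\neg\alpha\imp\neg\wn\alpha$. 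Second, I would derive the rule $\neg\alpha\turnstile\neg\wn\alpha$ from this law together with ($\wn_\top$): from a derivation of the problem $\neg\alpha$, the rule ($\wn_\top$) produces $\wn\neg\alpha$, and then the law and modus ponens of classical logic give $\neg\wn\alpha$. Third, I would recover ($\wn_\bot$) from the rule by instantiating $\alpha:=\bot$: the premise $\neg\bot$ (that is, $\bot\to\bot$) is an intuitionistic theorem, so the rule delivers $\turnstile\neg\wn\bot$, which is ($\wn_\bot$). This closes the cycle, and since the three steps use only ($\wnto$) and ($\wn_\top$) beyond the ambient classical and intuitionistic logics, the three statements are equivalent modulo ($\wnto$) and ($\wn_\top$). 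Part (b) then follows by the stated substitution, with modus ponens and the theorem $0\to 0$ now taken in intuitionistic logic on problems and with ($\oc_\top$) used in the law-to-rule step.

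The argument is essentially bookkeeping, so the only real obstacle is keeping the two negations apart: $\neg$ abbreviates ${}\to\bot$ on problems and ${}\to 0$ on propositions, so $\neg\wn\bot$ is the \emph{proposition} $\wn\bot\to 0$ while $\neg\oc 0$ is the \emph{problem} $\oc 0\to\bot$; and each manipulation --- composing $\imp$'s, weakening a consequent, applying modus ponens, instantiating a schema at a $0$-ary variable --- must be carried out in whichever of the two logics (classical on propositions, intuitionistic on problems) the formula in question belongs to, as licensed by the deductive system of \S\ref{deductive}. Nothing here is deep, but a careless write-up is exactly where it would go wrong.
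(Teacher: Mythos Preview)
Your argument is correct and matches the paper's proof essentially step for step: both establish the cycle ($\wn_\bot$) $\Rightarrow$ law $\Rightarrow$ rule $\Rightarrow$ ($\wn_\bot$) using exactly ($\wnto$), ($\wn_\top$), and the instance $\alpha:=\bot$, and likewise for (b). One small slip in your summary of (b): the theorem $\neg 0$, i.e.\ $0\to 0$, is a \emph{proposition} and hence a law of classical logic, not of intuitionistic logic on problems --- it is the modus ponens and the syllogistic reasoning in step~1 that move to the intuitionistic side in (b).
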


\begin{proof}[Proof. (a)]
By ($\wnto$), we have $\turnstile\wn(\alpha\to\ab)\to (\wn\alpha\to\wn\ab)$.
Assuming ($\wn_\bot$), we also have $\turnstile\wn\ab\to \clbot$.
Hence $\turnstile\wn(\alpha\to\ab)\to (\wn\alpha\to \clbot)$; that is, $\turnstile\wn\neg\alpha\to\neg\wn\alpha$.

By ($\wn_\top$) we have $\neg\alpha\turnstile\wn\neg\alpha$.
Assuming $\prin\wn\neg\alpha\to\neg\wn\alpha$, by {\it modus ponens} we have
$\wn\neg\alpha\turnstile\neg\wn\alpha$.
Combining these yields $\neg\alpha\turnstile\neg\wn\alpha$.

Finally, assuming $\neg\alpha\,/\,\neg\wn\alpha$, we have, in particular,
$\neg\ab\turnstile\neg\wn\ab$.
Since $\neg\ab=\ab\to\ab$ is an intuitionistic validity, we get $\turnstile\neg\wn\ab$.
\end{proof}

\begin{proof}[(b)] By ($\octo$), we have $\turnstile\oc(p\to \clbot)\to (\oc p\to\oc \clbot)$.
Assuming ($\oc_\bot$), we also have $\turnstile\oc \clbot\to\ab$.
Hence $\turnstile\oc(p\to \clbot)\to (\oc p\to\ab)$; that is, $\turnstile\oc\neg p\to\neg\oc p$.

By ($\oc_\top$) we have $\neg p\turnstile\oc\neg p$.
Assuming $\prin\oc\neg p\to\neg\oc p$, by {\it modus ponens} we have
$\oc\neg p\turnstile\neg\oc p$.
Combining these yields $\neg p\turnstile\neg\oc p$.

Finally, assuming $\neg p\,/\,\neg\oc p$, we have, in particular,
$\neg \clbot\turnstile\neg\oc \clbot$.
Since $\neg \clbot=\clbot\to \clbot$ is a classical validity, we get $\turnstile\neg\oc \clbot$.
\end{proof}

\begin{proposition} \label{insolubility} $\turnstile\neg\alpha\tofrom\oc\neg\wn\alpha$.
\end{proposition}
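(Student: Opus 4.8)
The plan is to establish the two implications in $\neg\alpha\iff\oc\neg\wn\alpha$ separately, using the Galois connection of Theorem~\ref{Galois} together with the rewritings of $(\wn_\bot)$ and $(\oc_\bot)$ obtained in Proposition~\ref{v*}. For the direction $\oc\neg\wn\alpha\imp\neg\alpha$, I would start from $(\oc\wn)$, which gives $\alpha\imp\oc\wn\alpha$, and combine it with $\oc(\neg\wn\alpha)\imp\neg\oc\wn\alpha$ (the rewriting of $(\oc_\bot)$ modulo $(\octo)$ and $(\oc_\top)$ from Proposition~\ref{v*}(b)); composing yields $\oc\neg\wn\alpha\land\alpha\imp\neg\oc\wn\alpha\land\oc\wn\alpha\imp\bot$, i.e.\ $\oc\neg\wn\alpha\imp\neg\alpha$ after the exponential law in intuitionistic logic.

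For the reverse direction $\neg\alpha\imp\oc\neg\wn\alpha$, the natural route is via the adjunction. By Theorem~\ref{Galois} it suffices to show $\wn\neg\alpha\imp\neg\wn\alpha$ — wait, that is not quite the shape needed; rather I want $\alpha'\imp\oc p$ from $\wn\alpha'\imp p$ with $\alpha'=\neg\alpha$ and $p=\neg\wn\alpha$, so I need $\wn\neg\alpha\imp\neg\wn\alpha$. But this is exactly the rewriting of $(\wn_\bot)$ modulo $(\wnto)$ and $(\wn_\top)$ from Proposition~\ref{v*}(a). Hence $\wn\neg\alpha\imp\neg\wn\alpha$ holds, and applying the $\imp$-direction of Theorem~\ref{Galois} (namely: $\wn\beta\imp q$ implies $\beta\imp\oc q$) with $\beta=\neg\alpha$ and $q=\neg\wn\alpha$ gives $\neg\alpha\imp\oc\neg\wn\alpha$, as desired.

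Putting the two implications together with the classical/intuitionistic machinery for $\tofrom$ yields $\turnstile\neg\alpha\tofrom\oc\neg\wn\alpha$, which is the claim. The only mild subtlety is bookkeeping: $\neg\alpha$ is a problem (so $\neg$ there is intuitionistic, $\alpha\to\bot$), whereas $\neg\wn\alpha$ is a proposition (so $\neg$ there is classical, $\wn\alpha\to 0$), and one must keep track of which falsity constant and which logic is in play when invoking Proposition~\ref{v*} and Theorem~\ref{Galois}; but since Proposition~\ref{v*} is stated in precisely these terms, no real obstacle arises. I expect the main point to check is simply that the hypotheses of Theorem~\ref{Galois} are being applied with the correct substitution instance — everything else is a direct composition of already-established implications.
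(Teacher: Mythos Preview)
Your proof is correct and essentially the same as the paper's: both directions rest on $(\oc\wn)$ together with the rewritings of $(\wn_\bot)$ and $(\oc_\bot)$ from Proposition~\ref{v*}. The only cosmetic difference is that the paper presents the argument as a single cycle $\neg\alpha\imp\oc\wn\neg\alpha\imp\oc\neg\wn\alpha\imp\neg\oc\wn\alpha\imp\neg\alpha$ (using $(\oc\wn)$, then monotonicity of $\oc$ applied to \ref{v*}(a), then \ref{v*}(b), then the contrapositive of $(\oc\wn)$), whereas you treat the two implications separately and invoke Theorem~\ref{Galois} as a packaged shortcut for the forward direction---which unpacks to exactly the same two steps.
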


This yields a definition of intuitionistic negation in terms of classical one.
As discussed in detail in \cite{M0}*{\S\ref{int:about-bhk}}, this fully agrees with
the BHK interpretation (and with a remark by Heyting; but disagrees with a remark by Kolmogorov).
Thus, this is yet another feature of the BHK interpretation that is captured in QHC but not
in the usual formalization of intuitionistic logic.

\begin{proof} By ($\oc\wn$), $\turnstile\neg\alpha\to\oc\wn\neg\alpha$, from \ref{v*}
we get $\turnstile\oc\wn\neg\alpha\to \oc\neg\wn\alpha$ and
$\turnstile\oc\neg\wn\alpha\to \neg\oc\wn\alpha$, and by
the contrapositive of ($\oc\wn$), $\turnstile\neg\oc\wn\alpha\to\neg\alpha$.
\end{proof}

\begin{remark}\label{insolubility-2}
Since $\turnstile\neg\neg(\alpha\lor\neg\alpha)$ (see \cite{M0}*{(\ref{int:not-not-LEM})}),
by \ref{insolubility} and by the converse of ($\oc_\top$), we have
$\turnstile\neg\Box\neg\wn(\alpha\lor\neg\alpha)$; thus it is impossible to prove
that $\alpha\lor\neg\alpha$ has no solutions.
\end{remark}

\begin{corollary}\label{weak Hilbert}
$\turnstile\wn\neg\alpha$ if and only if $\turnstile\neg\wn\alpha$.
\end{corollary}

Here the ``only if'' part is a consequence of \ref{v*}(a).
The ``if'' part can also be stated in a stronger form: $\neg\wn\alpha\turnstile\wn\neg\alpha$.

\begin{proof} Indeed, we have $\neg\wn\alpha\turnstile\wn\oc\neg\wn\alpha$ by ($\oc_\top$) and ($\wn_\top$),
and $\turnstile\wn\oc\neg\wn\alpha\tofrom\wn\neg\alpha$ by \ref{insolubility}.
\end{proof}

\begin{corollary} \label{move-nabla}
$\turnstile\neg\nabla\alpha\Tofrom\neg \alpha$ and
$\turnstile\neg\alpha\Tofrom\nabla\neg \alpha$.
\end{corollary}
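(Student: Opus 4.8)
The plan is to notice that Corollary \ref{move-nabla} is already contained, essentially verbatim, in the proof of Proposition \ref{insolubility}. Since $\nabla=\oc\wn$ by definition, the three expressions in the statement are $\neg\oc\wn\alpha=\neg\nabla\alpha$, the proposition $\neg\alpha$, and $\oc\wn\neg\alpha=\nabla\neg\alpha$. So what has to be produced is the pair of equivalences $\neg\alpha\iff\neg\nabla\alpha$ and $\neg\alpha\iff\nabla\neg\alpha$.

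The key observation is that the proof of \ref{insolubility} does not merely establish the single equivalence $\neg\alpha\iff\oc\neg\wn\alpha$ but actually exhibits a cycle of implications
\[
\neg\alpha\imp\oc\wn\neg\alpha\imp\oc\neg\wn\alpha\imp\neg\oc\wn\alpha\imp\neg\alpha,
\]
where the first arrow is the instance $\neg\alpha\to\oc\wn\neg\alpha$ of (\oc\wn), the second comes from $\wn\neg\alpha\imp\neg\wn\alpha$ (part of \ref{v*}(a)) together with the monotonicity of $\oc$, the third is the instance of \ref{v*}(b) with $p=\wn\alpha$, and the last is the contrapositive of the instance $\alpha\to\oc\wn\alpha$ of (\oc\wn). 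Because this chain closes into a loop, the four propositions $\neg\alpha$, $\oc\wn\neg\alpha$, $\oc\neg\wn\alpha$, $\neg\oc\wn\alpha$ are pairwise equivalent in QHC; rewriting $\oc\wn$ as $\nabla$, the equivalences among the first, second and fourth of them are precisely $\neg\alpha\iff\nabla\neg\alpha$ and $\neg\alpha\iff\neg\nabla\alpha$, which is the statement.

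If a more self-contained presentation is wanted, I would isolate the two directions: $\neg\nabla\alpha\imp\neg\alpha$ is just the contrapositive of ($1^\nabla$), and $\neg\alpha\imp\nabla\neg\alpha$ is ($1^\nabla$) applied to the problem $\neg\alpha$; the only genuine work is travelling back from $\nabla\neg\alpha$ to $\neg\nabla\alpha$ (hence to $\neg\alpha$), which is exactly the middle two arrows above and uses nothing beyond \ref{v*}. I do not expect any real obstacle here; the one point deserving a moment's care is that the corollary asserts honest implications ($\imp$, i.e.\ provable conditionals) rather than merely derivable rules ($\turnstile$), but every arrow in the displayed cycle is already of that stronger form, so no strengthening step is required.
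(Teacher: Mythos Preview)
Your proposal is correct and is essentially the paper's own argument: the paper's proof consists of the single sentence ``This follows from the proof of \ref{insolubility}'', and you have unpacked exactly the cycle of implications in that proof and read off the two required equivalences. One terminological slip: you twice call $\neg\alpha$, $\oc\wn\neg\alpha$, etc.\ ``propositions'', but since $\alpha$ is a problem these are all problems (the $\neg$ here is intuitionistic); this does not affect the argument.
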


This follows from the proof of \ref{insolubility}.

We note that Corollary \ref{move-nabla} implies that $\turnstile\neg\neg \alpha\Tofrom\neg\nabla\neg \alpha$
and $\turnstile\neg\nabla\neg \alpha\Tofrom\neg(\neg\nabla\neg)\neg \alpha$, which is in contrast with 
$\turnstile\neg(\neg\Box\neg)\neg p\tofrom\Box p$.

\begin{corollary} \label{nabla-negneg1}
$\turnstile\nabla \alpha\to\neg\neg \alpha$.
\end{corollary}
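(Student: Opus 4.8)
The plan is to derive $\nabla\alpha\imp\neg\neg\alpha$ as a short monotonicity chase, using Proposition~\ref{insolubility} to rewrite the intuitionistic double negation $\neg\neg\alpha$ as $\oc$ of a proposition. The one intermediate fact I would isolate is $\wn\alpha\imp\neg\wn\neg\alpha$ (with the outer negation on the right the classical one). This is immediate from Proposition~\ref{v*}(a): since $\wn\neg\alpha\imp\neg\wn\alpha$ holds in QHC, classical logic on propositions gives $\wn\alpha\land\wn\neg\alpha\imp\wn\alpha\land\neg\wn\alpha\imp 0$, and the exponential law yields $\wn\alpha\imp(\wn\neg\alpha\to 0)=\neg\wn\neg\alpha$. (One could equally well argue from $(\wn_\land)$, $(\wn_\bot)$ and the monotonicity of $\wn$: $\wn\alpha\land\wn\neg\alpha\imp\wn(\alpha\land\neg\alpha)\imp\wn\bot\imp 0$.)

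Then I would push this through $\oc$. By the monotonicity of $\oc$ from \S\ref{Galois subsection}, $\nabla\alpha=\oc\wn\alpha\imp\oc\neg\wn\neg\alpha$. Applying Proposition~\ref{insolubility} with $\neg\alpha$ substituted for $\alpha$ gives $\neg\neg\alpha\iff\oc\neg\wn\neg\alpha$, and composing the two steps yields $\nabla\alpha\imp\neg\neg\alpha$.

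An alternative route, closer in spirit to the surrounding discussion, uses Corollary~\ref{move-nabla}. One first observes that $\nabla$ commutes with $\land$: by $(\oc_\land)$ and $(\wn_\land)$, $\nabla p\land\nabla q\iff\oc(\wn p\land\wn q)\iff\oc\wn(p\land q)=\nabla(p\land q)$. Since $\alpha\land\neg\alpha\imp\bot$ intuitionistically, monotonicity of $\nabla$ together with $(3^\nabla)$ gives $\nabla\alpha\land\nabla\neg\alpha\iff\nabla(\alpha\land\neg\alpha)\imp\nabla\bot\imp\bot$; combining with $\neg\alpha\imp\nabla\neg\alpha$ (from $(1^\nabla)$, or from Corollary~\ref{move-nabla}) gives $\nabla\alpha\land\neg\alpha\imp\bot$, i.e.\ $\nabla\alpha\imp\neg\neg\alpha$.

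There is no real obstacle: the statement is a one-paragraph corollary of material already in hand. The only step that takes a moment is recognizing that Proposition~\ref{insolubility}, read with $\neg\alpha$ in place of $\alpha$, converts the intuitionistic $\neg\neg\alpha$ into $\oc$ of a classical proposition, after which everything else is monotonicity of $\wn$ and $\oc$ together with the already-proved Proposition~\ref{v*}(a).
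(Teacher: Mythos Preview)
Your arguments are correct, but both routes are longer than the paper's. The paper's proof is a two-liner: from Corollary~\ref{move-nabla} one has $\neg\alpha\imp\neg\nabla\alpha$, and then the intuitionistic ``implication shift'' law $(\phi\to\neg\psi)\to(\psi\to\neg\phi)$ (with $\phi=\neg\alpha$, $\psi=\nabla\alpha$) immediately gives $\nabla\alpha\imp\neg\neg\alpha$. In other words, the paper uses the \emph{other} half of Corollary~\ref{move-nabla} (the identity $\neg\nabla\alpha\iff\neg\alpha$) rather than the half $\nabla\neg\alpha\iff\neg\alpha$ that your alternative route relies on. Your first route, via Proposition~\ref{insolubility} and classical reasoning on the $\wn$-side, is a genuinely different decomposition: it stays on the proposition side as long as possible and only converts back to a problem at the last step, whereas the paper works entirely on the problem side using intuitionistic contraposition. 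Your second route is closer in spirit to the paper's but detours through $\nabla(\alpha\land\neg\alpha)$; the paper avoids this by applying contraposition directly.
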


\begin{proof}
By \ref{move-nabla}, $\turnstile\neg \alpha\to\neg\nabla \alpha$.
Then \cite{M0}*{\S\ref{int:tautologies}, (\ref{int:implication-shift})} yields $\turnstile\nabla \alpha\to\neg\neg \alpha$.
\end{proof}

In fact, $\prin\nabla\alpha\to\neg\neg\alpha$ is yet another equivalent form of the law
($\oc_\bot$), since $\nabla\ab\to\neg\neg\ab$ implies $\nabla\ab\to\ab$, or $\oc \clbot\to\ab$.
Moreover, as observed in \cite{Ac}, $\prin\nabla\alpha\to\neg\neg\alpha$ is also an equivalent form
of the law (3$^\nabla$) of QH4.

\begin{remark} Using \ref{insolubility}, the following consequence can be drawn from the fact that
the implication of ($\octo$) goes, in a sense, in the opposite direction with respect to that of
($\oc_\lor$) and with respect to one of the implications of ($\oc_\land$).
The intuitionistic implications
$\turnstile\alpha\lor\beta\to\neg(\neg\alpha\land\neg\beta)$ and $\turnstile\alpha\lor\beta\to\neg\alpha\to\beta$
(cf.\ \cite{M0}*{(\ref{int:double negation}), (\ref{int:deMorgan2}) and (\ref{int:implication0'})}),
when specialized to the image of $\oc$, i.e., in the form $\turnstile\oc p\lor\oc q\to\neg\oc p\to\oc q$ and
$\turnstile\oc p\lor\oc q\to\neg(\neg\oc p\land\neg\oc q)$,
each factor into two irreversible (as we will see in \cite{M2}) implications in QHC:
$\turnstile\oc\Box p\lor\oc q\To\oc(\Box p\lor q)$ and $\turnstile\oc(\neg\Box p\to q)\To \oc\neg\Box p\to\oc q$;
respectively,
$\turnstile\oc\Box p\lor\oc\Box q\To\oc(\Box p\lor\Box q)$ and
$\turnstile\oc\neg(\neg\Box p\land\neg\Box q)\To\neg(\oc\neg\Box p\land\oc\neg\Box q)$.
\end{remark}

\metameta

\subsection{Implication}

\formulas
\begin{proposition}\label{move-oc-wn}
(a) $\turnstile(\oc\wn\alpha\to\oc\wn\beta)\Tofrom\oc(\wn\alpha\to\wn\beta)$;

(b) $\turnstile\wn\oc p\to\wn\oc q\Iff\wn(\oc p\to\oc q)$.
\end{proposition}

\noindent
We will actually prove stronger assertions: 
\begin{enumerate}[label=(\alph*)]
\item $\turnstile(\oc\wn\alpha\to\oc q)\To\oc(\wn\alpha\to q)$;
\item $\wn\alpha\to\wn\oc q\turnstile\wn(\alpha\to\oc q)$.
\end{enumerate}
(Their converses follow from ($\octo$) and ($\wnto$), respectively.)

\begin{proof}[Proof. (a)]
By ($\wnto$), $\turnstile\wn(\oc\wn\alpha\to\oc q)\to (\wn\oc\wn\alpha\to \wn\oc q)$.
Since $\turnstile \wn\oc\wn\alpha\tofrom\wn\alpha$ and
$\turnstile \wn\oc q\to q$, we get $\turnstile\wn(\oc\wn\alpha\to\oc q)\to(\wn\alpha\to q)$.
Then by \ref{Galois},
$\turnstile (\oc\wn\alpha\to \oc q)\To \oc(\wn\alpha\to q)$.
\end{proof}

\begin{proof}[(b)] By ($\oc_\top$), $\wn\alpha\to\wn\oc q\turnstile\oc(\wn\alpha\to\wn\oc q)$,
and by (a), $\turnstile\oc(\wn\alpha\to\wn\oc q)\to(\oc\wn\alpha\to\oc\wn\oc q)$.
Since $\turnstile \alpha\to\oc\wn\alpha$ and $\turnstile\oc\wn\oc q\tofrom q$, we get
$\turnstile\oc(\wn\alpha\to\wn\oc q)\to(\alpha\to\oc q)$.
Finally, by ($\wn_\top$), $\alpha\to\oc q\turnstile\wn(\alpha\to\oc q)$.
\end{proof}

The following proposition strengthens \ref{sup-inf}(b).
In addition, its part (a) along with part (a) of the preceding proposition generalize
\ref{insolubility} and \ref{move-nabla}.

\begin{proposition}\label{move-oc-wn2} We have

(a) $\turnstile(\nabla\alpha\to\nabla\beta)\Tofrom(\alpha\to\nabla\beta)$
and $\turnstile\nabla(\alpha\to\nabla\beta)\Tofrom(\alpha\to\nabla\beta)$;

(b) $\turnstile\Box p\to\Box q\Iff\Box p\to q$ and $\turnstile\Box(\Box p\to q)\Iff\Box p\to q$.
\end{proposition}

\begin{proof}[(a)] Since $\turnstile\alpha\to\nabla\alpha$, we get
$\turnstile(\nabla\alpha\to\nabla\beta)\To(\alpha\to\nabla\beta)$
and $\turnstile(\alpha\to\nabla\beta)\To\nabla(\alpha\to\nabla\beta)$.
Finally, $\turnstile\nabla(\alpha\to\nabla\beta)\To(\nabla\alpha\to\nabla\beta)$ 
by ($4^\nabla$) and ($2^\nabla$).
\end{proof}

\begin{proof}[(b)] By ($1^\Box$), $\turnstile(\Box p\to\Box q)\To(\Box p\to q)$. 
By ($3^\Box$), $\Box p\to q\turnstile\Box(\Box p\to q)$.
Finally, by ($4^\Box$) and ($2^\Box$), $\turnstile\Box(\Box p\to q)\To(\Box p\to\Box q)$.
\end{proof}

\metameta

\begin{corollary}\label{Russell-Prawitz} If $\Phi$ is an i-formula, $[\nabla\Phi]$ is the least upper bound of 
all classes $[(\Phi\to\oc F)\to\oc F]$, where $F$ is a c-formula.
\end{corollary}

Let us note that $(\Phi\to\oc F)\to\oc F$ specializes to $\neg\neg\Phi$ when $F=\clbot$.

\begin{proof} Let us observe that $[\Psi]$ is an upper bound of all $[(\Phi\to\oc F)\to\oc F]$ if and only if
$\turnstile\Psi\to\big((\Phi\to\oc F)\to\oc F\big)$ for all c-formulas $F$.
By the exponential law, the latter is equivalent to $\turnstile (\Phi\to\oc F)\to(\Psi\to\oc F)$.
Now by \ref{move-oc-wn2}(a) we do have $\turnstile (\Phi\to\oc F)\to(\nabla\Phi\to\oc F)$ for all
c-formulas $F$.
It remains to show that if $\turnstile (\Phi\to\oc F)\to(\Psi\to\oc F)$ for all c-formulas $F$,
then $\turnstile\Psi\to\nabla\Phi$.
Indeed, this follows by setting $F=\wn\Phi$.
\end{proof}

A variation of \ref{Russell-Prawitz} can be formulated within the meta-logic, similarly to 
\cite{M0}*{\ref{int:RP-translation2}}:

\formulas
\begin{corollary} \label{Russell-Prawitz2} $\turnstile\nabla\alpha\Iff\mq{p}(\alpha\to\oc p)\to\oc p$.
\end{corollary}

Of course, if we replace $\iff$ by $\tofrom$ here, we will get a meaningless expression (i.e., 
not a well-typed $\mm\lambda$-expression) since the right hand side contains a meta-quantifier.
But if we could do this, then \ref{Russell-Prawitz2} would be saying that $\nabla$ is 
a ``Russell--Prawitz modality'' in the terminology of Aczel \cite{Ac} (see also \cite{Coq}).
 
\begin{proof} Since $p$ does not occur in $\nabla\alpha$, to show that 
$\nabla\alpha\turnstile\mq{p}(\alpha\to\oc p)\to\oc p$,
it suffices to show that $\nabla\alpha\turnstile(\alpha\to\oc p)\to\oc p$ 
(by the generalization meta-rule).
This in turn reduces to deriving $\nabla\alpha\to\big((\alpha\to\oc p)\to\oc p\big)$.
By the exponential law the latter formula is equivalent to $(\alpha\to\oc p)\to(\nabla\alpha\to\oc p)$,
which was derived in \ref{move-oc-wn2}(a).

Conversely, by the specialization meta-rule,
$\mq{p}(\alpha\to\oc p)\to\oc p\turnstile(\alpha\to\oc\wn\alpha)\to\oc\wn\alpha$.
But the latter formula is equivalent to $\nabla\alpha$ due to $\oc\wn$.
\end{proof}

\metameta

\subsection{Distributivity properties}
\formulas

The following is a direct consequence of \ref{symmetry}(b).

\begin{proposition}\label{move-box-diamond}
$\turnstile\Box(p\land q)\Tofrom\Box p\land\Box q$
\ and \
$\turnstile\nabla(\alpha\land \beta)\Tofrom\nabla \alpha\land\nabla \beta$.
\end{proposition}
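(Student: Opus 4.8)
The plan is to unfold the definitions $\Box=\wn\oc$ and $\nabla=\oc\wn$ and then push $\wn$ and $\oc$ through the conjunction one layer at a time, using the two conjunction-distributivity laws now at our disposal: ($\wn_\land$), which is a primary law of QHC and was reproved redundant in \ref{symmetry}(a), and ($\oc_\land$), established in \ref{symmetry}(b).

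For the $\nabla$-equivalence I would begin from $\nabla(\alpha\land\beta)=\oc\wn(\alpha\land\beta)$ and handle the inner $\wn$ first: ($\wn_\land$) gives $\wn(\alpha\land\beta)\iff\wn\alpha\land\wn\beta$, and carrying this equivalence through $\oc$ yields $\oc\wn(\alpha\land\beta)\iff\oc(\wn\alpha\land\wn\beta)$; then ($\oc_\land$) gives $\oc(\wn\alpha\land\wn\beta)\iff\oc\wn\alpha\land\oc\wn\beta=\nabla\alpha\land\nabla\beta$, and composing the two equivalences closes this case. The $\Box$-equivalence is the mirror image: from $\Box(p\land q)=\wn\oc(p\land q)$ one applies ($\oc_\land$) first to reach $\Box(p\land q)\iff\wn(\oc p\land\oc q)$, and then ($\wn_\land$) to reach $\wn\oc p\land\wn\oc q=\Box p\land\Box q$.

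I do not anticipate a real obstacle here; this is precisely why the statement is advertised as ``a direct consequence of \ref{symmetry}(b)''. The only point deserving a word of care is that one is entitled to substitute $\iff$-equivalent subformulas underneath $\wn(-)$ and $\oc(-)$ --- but this is immediate from the monotonicity of $\wn$ and $\oc$ recorded just before Theorem~\ref{Galois}, and invokes neither the Proclus--Kreisel postulate ($\oc\wn$) nor the $\top$-rules beyond what is already packaged into ($\wn_\land$) and ($\oc_\land$). Conceptually, ($\wn_\land$) and ($\oc_\land$) say that $\wn$ and $\oc$ each commute with binary conjunction up to provable equivalence, so their composites $\Box$ and $\nabla$ do too; the substitution chain above merely spells this out.
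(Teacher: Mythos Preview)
Your proof is correct and is exactly the intended unpacking of the paper's one-line remark that the proposition is ``a direct consequence of \ref{symmetry}(b)'': apply $(\oc_\land)$ and $(\wn_\land)$ in succession (in the appropriate order for each of $\Box$ and $\nabla$), using monotonicity of $\wn$ and $\oc$ to transport the inner equivalence outward. One tiny quibble with your commentary: the substitution step does rely on monotonicity, which in turn rests on $(\wn_\top)$ and $(\oc_\top)$, so the $\top$-rules are invoked directly there and not only ``as packaged into'' $(\wn_\land)$ and $(\oc_\land)$; but this does not affect the argument.
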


\begin{proposition} \label{McKinsey+Gentzen}
The following holds in QHC.

(a) $\turnstile\Box(\wn\alpha\land \wn\beta)\Tofrom \wn\alpha\land \wn\beta$;

(b) $\turnstile\Box(\wn\alpha\lor \wn\beta)\Tofrom \wn\alpha\lor \wn\beta$;

(c) $\turnstile\Box\exists\tr x\, \wn\alpha(\tr x)\Tofrom\exists\tr x\, \wn\alpha(\tr x)$;

(d) $\turnstile\nabla(\oc p\land \oc q)\Tofrom \oc p\land \oc q$;

(e) $\turnstile\nabla(\oc p\to \oc q)\Tofrom \oc p\to \oc q$;

(f) $\turnstile\nabla\forall\tr x\, \oc p(\tr x)\Tofrom\forall\tr x\, \oc p(\tr x)$.
\end{proposition}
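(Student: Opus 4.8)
The plan is to deduce all six clauses from material already in hand in \S\ref{Galois subsection}: the two idempotency identities $\wn\oc\wn\gamma\iff\wn\gamma$ and $\oc\wn\oc r\iff\oc r$, together with the distributivity/commutation laws for $\wn$ and $\oc$ over the connectives occurring inside. Since $\wn$ and $\oc$ descend to monotone maps of the Lindenbaum posets, so do $\Box=\wn\oc$ and $\nabla=\oc\wn$; in particular each of $\Box$ and $\nabla$ respects the relation $\iff$, so it is enough to chase equivalences and there is no need to treat the two implications of each biconditional separately. (The ``easy'' directions are anyway immediate: $\Box r\imp r$ by ($1^\Box$) and $r\imp\nabla r$ by ($1^\nabla$).)

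For (a), (b), (c) I would first rewrite the bracketed $\wn$-proposition as a single $\wn\gamma$ using ($\wn_\land$), ($\wn_\lor$), ($\wn_\exists$) respectively (with $\gamma=\alpha\land\beta$, $\alpha\lor\beta$, $\exists x\,\alpha(x)$). Then $\Box(\wn\gamma)=\wn\oc\wn\gamma\iff\wn\gamma$ by the first idempotency identity, and $\wn\gamma$ is in turn equivalent back to the original compound by the same distributivity law; concatenating gives the asserted biconditional in one chain. Clauses (d) and (f) are strictly dual: use ($\oc_\land$) and ($\oc_\forall$) from \ref{symmetry}(b) to rewrite $\oc p\land\oc q$ as $\oc(p\land q)$ and $\forall x\,\oc p(x)$ as $\oc\forall x\,p(x)$, then collapse $\nabla(\oc r)=\oc\wn\oc r\iff\oc r$ by the second idempotency identity and rewrite back. (For (a) and (d) one could alternatively invoke \ref{move-box-diamond} and $\Box\wn\alpha\iff\wn\alpha$, $\nabla\oc p\iff\oc p$, but the uniform ``pull out the modal operator'' argument above covers (a)--(d),(f) in one stroke.)

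The one clause that does not fit this template is (e), and this is where the only real care is needed: the reverse of ($\octo$) is \emph{not} available, so there is no analogue of ($\oc_\land$) turning $\oc p\to\oc q$ into $\oc(p\to q)$. Instead I would note that (e) is exactly the instance $\alpha=\oc p$, $\beta=\oc q$ of \ref{move-oc-wn}(b), which gives $\nabla(\oc p\to\oc q)\iff\oc p\to\oc q$ directly; alternatively, for a self-contained argument: by ($4^\nabla$), $\nabla(\oc p\to\oc q)\imp(\nabla\oc p\to\nabla\oc q)$, and $\nabla\oc p\iff\oc p$, $\nabla\oc q\iff\oc q$ by $\oc\wn\oc r\iff\oc r$, whence $\nabla(\oc p\to\oc q)\imp(\oc p\to\oc q)$, the converse being ($1^\nabla$). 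So the expected ``obstacle'' is nothing more than recognizing that the implication case must be routed through ($4^\nabla$) (or \ref{move-oc-wn}(b)) rather than a distributivity law; the rest is a mechanical application of the idempotency identities.
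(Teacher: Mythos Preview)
Your proposal is correct and matches the paper's own proof essentially line for line: (a)--(d),(f) by pulling the inner $\wn$ or $\oc$ out via ($\wn_\land$), ($\wn_\lor$), ($\wn_\exists$), ($\oc_\land$), ($\oc_\forall$) and then collapsing $\Box\wn\gamma\iff\wn\gamma$ or $\nabla\oc r\iff\oc r$, and (e) separately via ($4^\nabla$) together with $\nabla\oc r\iff\oc r$ and ($1^\nabla$) for the converse. Your parenthetical alternative for (a),(d) via \ref{move-box-diamond} is also exactly the paper's first proof of those two clauses.
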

		
It is easy to see that these assertions are equivalent to their special
cases for i-formulas in the image of $\oc$ and for c-formulas in
the image of $\wn$.
Those special cases are in turn parallel to \cite{M0}*{\ref{int:QS4-McKinsey} and
\ref{int:Goedel-Gentzen}}.

\begin{proof}[Proof. (a,d)]
These follow from \ref{move-box-diamond} using
$\turnstile \wn\oc\wn\alpha\tofrom \wn\alpha$ or $\turnstile \oc\wn\oc p\tofrom \oc p$.
\end{proof}

\begin{proof}[(a,d,b,c,f)] Let us check (b).
We have $\turnstile\wn\alpha\lor\wn\beta\tofrom\wn(\alpha\lor \beta)$ and
$\turnstile\wn\oc\wn(\alpha\lor \beta)\tofrom \wn\oc(\wn\alpha\lor \wn\beta)$.
\end{proof}

\begin{proof}[(e)] ``$\to$'' follows from ($4^\nabla$), ($\oc\wn$) and ($\wn\oc$).
The converse follows from ($\oc\wn$).
\end{proof}

\begin{proposition} \label{Goedel+Kuroda}
The following holds in QHC.

(a) $\turnstile\wn(\alpha\land \beta)\Tofrom \wn(\nabla \alpha\land\nabla \beta)$;

(b) $\turnstile\wn(\alpha\lor \beta)\Tofrom \wn(\nabla \alpha\lor\nabla \beta)$;

(c) $\turnstile\wn\exists\tr x\, \alpha(\tr x)\Tofrom \wn\exists\tr x\,\nabla \alpha(\tr x)$;

(d) $\turnstile\oc(p\land q)\Tofrom \oc(\Box p\land\Box q)$;

(e) $\turnstile\oc\forall\tr x\, p(\tr x)\Tofrom \oc\forall\tr x\,\Box p(\tr x)$.
\end{proposition}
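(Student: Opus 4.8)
The plan is to deduce all six assertions from one mechanism: the distributivity of $\wn$ over $\land$, $\lor$ and $\exists$ (available as the primary laws ($\wn_\land$), ($\wn_\lor$), ($\wn_\exists$), each a biconditional) and of $\oc$ over $\land$ and $\forall$ (available as ($\oc_\land$) and ($\oc_\forall$) from \ref{symmetry}(b)), together with the two Galois identities $\wn\oc\wn\alpha\iff\wn\alpha$ and $\oc\wn\oc p\iff\oc p$ from \S\ref{Galois subsection}. In the notation $\nabla=\oc\wn$, $\Box=\wn\oc$ these identities read $\wn\nabla\alpha\iff\wn\alpha$ and $\oc\Box p\iff\oc p$.

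For (a) I would write the chain
$$\wn(\nabla\alpha\land\nabla\beta)\iff\wn\nabla\alpha\land\wn\nabla\beta\iff\wn\alpha\land\wn\beta\iff\wn(\alpha\land\beta),$$
using ($\wn_\land$) at the outer two steps and $\wn\nabla\gamma\iff\wn\gamma$ (with $\gamma=\alpha,\beta$) at the middle one. Part (b) is word for word the same with $\lor$ and ($\wn_\lor$) replacing $\land$ and ($\wn_\land$); part (c) is the same with $\exists x$ and ($\wn_\exists$), the middle step being the congruence $\exists x\,\wn\nabla\alpha(x)\iff\exists x\,\wn\alpha(x)$ obtained from $\wn\nabla\alpha(x)\iff\wn\alpha(x)$.

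Parts (d) and (e) are the formal duals. For (d):
$$\oc(\Box p\land\Box q)\iff\oc\Box p\land\oc\Box q\iff\oc p\land\oc q\iff\oc(p\land q),$$
using ($\oc_\land$) at the ends and $\oc\Box r\iff\oc r$ in the middle. For (e):
$$\oc\forall x\,\Box p(x)\iff\forall x\,\oc\Box p(x)\iff\forall x\,\oc p(x)\iff\oc\forall x\,p(x),$$
using ($\oc_\forall$) at the ends and $\oc\Box p(x)\iff\oc p(x)$ under $\forall x$ in the middle.

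I do not anticipate any real obstacle: each part is a three-link chain of equivalences built from laws already in hand. The only things demanding a moment's attention are the bookkeeping of types --- (a)--(c) live on the problem side, (d)--(e) on the proposition side, and in (e) the $p(x)$ under the quantifier must be a proposition for $\Box$ and $\oc$ to typecheck --- and the observation that ($\wn_\exists$) and ($\oc_\forall$) are needed in both directions, which is fine since each is stated as a biconditional.
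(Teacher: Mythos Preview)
Your proof is correct and follows essentially the same approach as the paper: distribute $\wn$ (resp.\ $\oc$) over the connective using ($\wn_\land$), ($\wn_\lor$), ($\wn_\exists$) (resp.\ ($\oc_\land$), ($\oc_\forall$)), apply the Galois identity $\wn\oc\wn\alpha\iff\wn\alpha$ (resp.\ $\oc\wn\oc p\iff\oc p$) componentwise, and undistribute. You also correctly note that in (e) the formula under the quantifier must be a proposition $p(x)$ for $\Box$ to typecheck, which is indeed a typo in the statement.
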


Applying $\oc$ to both sides in (a), (b), (c), and $\wn$ to both sides
in (d), (e) leads to no loss of generality, but makes the
validities parallel to \cite{M0}*{\ref{int:QS4-Goedel}
and \ref{int:Kuroda}} --- with the exception of one ``missing validity'',
$\wn(\alpha\to\beta)\Tofrom\wn(\nabla\alpha\to\nabla\beta)$, which will turn out to be
an independent principle \cite{M2}*{\ref{g2:principles}(a) and \ref{g2:sheaf-refutation}}.

\begin{proof} Assertions (a,b,c) follow from ($\wn_\land$), ($\wn_\lor$)
and ($\wn_\exists$) using that $\turnstile \wn\alpha\tofrom \wn\oc\wn\alpha$.
Assertions (d,e) follow from ($\oc_\land$) and ($\oc_\forall$) using that $\turnstile \oc p\tofrom \oc\wn\oc p$.
\end{proof}

\begin{proposition}\label{worst}
The following holds in QHC.

(a) $\turnstile\wn\oc(\wn\alpha\lor \wn\beta)\Tofrom \wn(\oc\wn\alpha\lor\oc\wn\beta)$;

(b) $\turnstile\wn\oc\exists\tr x\, \wn\alpha(\tr x)\Tofrom \wn\exists\tr x\, \oc\wn\alpha(\tr x)$;

(c) $\turnstile\oc(\wn\oc p\to \wn\oc q)\Tofrom \oc\wn(\oc p\to \oc q)$;

(d) $\turnstile\oc\forall\tr x\, \wn\oc p(\tr x)\Tofrom \oc\wn\forall\tr x\, \oc p(\tr x)$.
\end{proposition}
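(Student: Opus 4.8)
The plan is to prove each of (a)--(d) by rewriting \emph{both} sides of the claimed equivalence to one and the same, visibly simpler problem, using only bookkeeping with the identities $\wn\oc\wn\alpha\iff\wn\alpha$ and $\oc\wn\oc p\iff\oc p$ from \S\ref{Galois subsection}, the distributivity laws ($\wn_\lor$), ($\wn_\exists$), ($\oc_\forall$), and Propositions \ref{McKinsey+Gentzen} and \ref{symmetry}. Concretely, the common ``collapsed'' forms will be $\wn\alpha\lor\wn\beta$ in (a), $\exists x\,\wn\alpha(x)$ in (b), $\oc p\to\oc q$ in (c), and $\forall x\,\oc p(x)$ in (d): in the notation $\Box=\wn\oc$, $\nabla=\oc\wn$, each side is either a $\Box$- or $\nabla$-prefixed formula that collapses by \ref{McKinsey+Gentzen}, or a formula with the $\wn$'s and $\oc$'s pushed inward that collapses by a distributivity law together with one of the idempotency identities.

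First I would dispose of (a), (b) and (d), which are pure bookkeeping. For (a): the left side $\wn\oc(\wn\alpha\lor\wn\beta)=\Box(\wn\alpha\lor\wn\beta)$ equals $\wn\alpha\lor\wn\beta$ by \ref{McKinsey+Gentzen}(b), while the right side $\wn(\oc\wn\alpha\lor\oc\wn\beta)$ equals $\wn\oc\wn\alpha\lor\wn\oc\wn\beta$ by ($\wn_\lor$), hence equals $\wn\alpha\lor\wn\beta$ by $\wn\oc\wn\alpha\iff\wn\alpha$. Part (b) is word-for-word the same with $\lor$ replaced by $\exists x$, ($\wn_\lor$) by ($\wn_\exists$), and \ref{McKinsey+Gentzen}(b) by \ref{McKinsey+Gentzen}(c). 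Part (d) is the mirror image on the $\oc$ side: the right side $\oc\wn\forall x\,\oc p(x)=\nabla\forall x\,\oc p(x)$ equals $\forall x\,\oc p(x)$ by \ref{McKinsey+Gentzen}(f), while the left side $\oc\forall x\,\wn\oc p(x)$ equals $\forall x\,\oc\wn\oc p(x)$ by ($\oc_\forall$), hence equals $\forall x\,\oc p(x)$ by $\oc\wn\oc p\iff\oc p$.

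The only place where I expect a genuine (if small) argument is (c), and there only in the reverse implication. Its right side $\oc\wn(\oc p\to\oc q)=\nabla(\oc p\to\oc q)$ collapses to $\oc p\to\oc q$ by \ref{McKinsey+Gentzen}(e), so it suffices to show $\oc(\wn\oc p\to\wn\oc q)\iff(\oc p\to\oc q)$. The forward implication is immediate from ($\octo$), which gives $\oc(\wn\oc p\to\wn\oc q)\imp(\oc\wn\oc p\to\oc\wn\oc q)$, followed by $\oc\wn\oc p\iff\oc p$ and $\oc\wn\oc q\iff\oc q$. For the converse I would pass through the Galois connection: by Theorem \ref{Galois} applied to the problem $\oc p\to\oc q$ and the proposition $\wn\oc p\to\wn\oc q$, the desired $(\oc p\to\oc q)\imp\oc(\wn\oc p\to\wn\oc q)$ is equivalent to $\wn(\oc p\to\oc q)\imp(\wn\oc p\to\wn\oc q)$ --- and this last is exactly the instance of ($\wnto$) obtained by substituting $\oc p$ and $\oc q$ for $\alpha$ and $\beta$. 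Combining the two implications yields (c), which completes the proof.
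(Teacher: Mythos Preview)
Your proof is correct and essentially the same as the paper's. For (a), (b), (d) your argument is identical to the paper's: apply $(\wn_\lor)$, $(\wn_\exists)$, $(\oc_\forall)$ to one side and collapse via $\wn\oc\wn\alpha\iff\wn\alpha$ or $\oc\wn\oc p\iff\oc p$, then invoke the corresponding item of Proposition~\ref{McKinsey+Gentzen} for the other side. For (c) the paper cites Proposition~\ref{move-oc-wn}(a) to collapse the left side to $\oc p\to\oc q$, whereas you re-derive exactly that collapse by the Galois connection plus $(\wnto)$ and $(\octo)$; but this is precisely the content of the proof of \ref{move-oc-wn}(a), so the difference is only one of citation versus inlining.
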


\begin{proof}[Proof. (a,b)]
On applying ($\wn_\lor$) or ($\wn_\exists$) to the right hand side, these
reduce to \ref{McKinsey+Gentzen}(b,c).
\end{proof}

\begin{proof}[(c,d)]
On applying \ref{move-oc-wn}(a) or ($\oc_\forall$)
to the left hand side, these reduce to \ref{McKinsey+Gentzen}(e,f).
\end{proof}

\metameta

\section{Stability and decidability}\label{stable and decidable}

\subsection{Stable and decidable c-formulas}
Let us recall that a i-formula $\Phi$ is called decidable if
$\turnstile \Phi\lor\neg\Phi$, and stable if
$\turnstile\neg\neg\Phi\to\Phi$; decidable i-formulas are stable (see \cite{M0}*{(\ref{int:decidable-stable})}).
Let us call a c-formula $F$ {\it decidable} if
$\turnstile \oc F\lor \oc\neg F$, and {\it stable} if
$\turnstile\neg\oc\neg F\to\oc F$; decidable c-formulas are stable
(using the intuitionistic law $\fm{\prin\alpha\lor\beta\to\neg\beta\to\alpha}$, cf.\
\cite{M0}*{(\ref{int:implication0'})}).
Let us note that by \ref{Galois}, $\turnstile\neg\oc\neg F\to\oc F$ is equivalent to
$\turnstile\wn\neg\oc\neg F\to F$, which by \ref{insolubility} is in turn equivalent to
$\turnstile\wn\oc\neg\wn\oc\neg F\to F$, that is, $\turnstile\Box\Diamond F\to F$.
In words, ``if $F$ is provably irrefutable, then it is true''.

Usually the notions of stability and decidability are considered relatively to a theory over
intuitionistic logic.
Instead of doing this we will consider internalizations of stability and decidability as operators.

\formulas
Thus we define $\Dec,\Stab:\1_i\too\1_i$ as $\alpha\mapsto\alpha\lor\neg\alpha$ and 
$\alpha\mapsto\neg\neg\alpha\to\alpha$ respectively;
and $\Dec,\Stab:\1_c\too\1_i$ as $p\mapsto\oc p\lor\oc\neg p$ and 
$p\mapsto\neg\oc\neg p\to\oc p$ respectively.
\metameta

\begin{proposition}\label{stable-decidable}
(a) If a c-formula $F$ is stable or decidable, then so is the i-formula $\oc F$.
The converse holds for c-formulas $F$ of the form $\wn\Phi$.

(b) If an i-formula $\Phi$ is stable or decidable, then so is
the c-formula $\wn\Phi$.
The converse holds for i-formulas $\Phi$ of the form $\oc F$.
\end{proposition}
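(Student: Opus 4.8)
The plan is to rewrite each of the stability and decidability claims as a judgement involving only $\nabla=\oc\wn$, the operator $\oc$, and ordinary negations, using three facts already available: Proposition~\ref{insolubility}, which gives $\oc\neg\wn\alpha\iff\neg\alpha$ and hence $\neg\oc\neg\wn\alpha\iff\neg\neg\alpha$; Corollary~\ref{move-nabla}, which gives $\neg\oc\wn\alpha=\neg\nabla\alpha\iff\neg\alpha$ and hence $\neg\neg\oc\wn\alpha\iff\neg\neg\alpha$; and the identity $\oc\wn\oc p\iff\oc p$ from \S\ref{Galois subsection}. After these substitutions most of the conditions to be compared turn out to be literally the same, and where they are not, the gap is bridged by Proposition~\ref{v*}(b), $\oc\neg p\imp\neg\oc p$, together with ($\oc\wn$), $\alpha\imp\oc\wn\alpha$.

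For part~(a), forward direction: the contrapositive of $\oc\neg p\imp\neg\oc p$ is $\neg\neg\oc p\imp\neg\oc\neg p$, so if $p$ is stable, i.e.\ $\neg\oc\neg p\imp\oc p$, composing gives $\neg\neg\oc p\imp\oc p$, stability of the problem $\oc p$. If $p$ is decidable, i.e.\ $\turnstile\oc p\lor\oc\neg p$, then combining with $\oc\neg p\imp\neg\oc p$ by the intuitionistic principle ``from $A\lor B$ and $B\to C$ derive $A\lor C$'' gives $\turnstile\oc p\lor\neg\oc p$, decidability of $\oc p$. For the converse, take $p=\wn\alpha$: by the identities above, stability of the problem $\oc\wn\alpha$ ($\neg\neg\oc\wn\alpha\imp\oc\wn\alpha$) is equivalent to $\neg\neg\alpha\imp\oc\wn\alpha$, and stability of the proposition $\wn\alpha$ ($\neg\oc\neg\wn\alpha\imp\oc\wn\alpha$) is also equivalent to $\neg\neg\alpha\imp\oc\wn\alpha$; likewise decidability of $\oc\wn\alpha$ and of $\wn\alpha$ both reduce to $\turnstile\nabla\alpha\lor\neg\alpha$. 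So the two conditions coincide, and in particular the converse holds.

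Part~(b) is parallel. By Proposition~\ref{insolubility}, stability of the proposition $\wn\alpha$ is the judgement $\neg\neg\alpha\imp\oc\wn\alpha$; if $\alpha$ is stable then $\neg\neg\alpha\imp\alpha\imp\oc\wn\alpha$, the last step by ($\oc\wn$); and if $\alpha$ is decidable then $\turnstile\alpha\lor\neg\alpha$ with $\alpha\imp\oc\wn\alpha$ gives $\turnstile\oc\wn\alpha\lor\neg\alpha$, which is decidability of $\wn\alpha$. For the converse, take $\alpha=\oc p$: then $\oc\wn\oc p\iff\oc p$, and \ref{insolubility} applied with $\oc p$ in place of $\alpha$ gives $\oc\neg\wn\oc p\iff\neg\oc p$; hence stability of the proposition $\wn\oc p$ ($\neg\oc\neg\wn\oc p\imp\oc\wn\oc p$) is literally $\neg\neg\oc p\imp\oc p$, stability of the problem $\oc p$, and decidability of $\wn\oc p$ is likewise exactly $\turnstile\oc p\lor\neg\oc p$, decidability of $\oc p$.

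I do not expect a serious obstacle: once the reformulations are in place each assertion is an immediate composition of implications or a one-line $\lor$-manipulation in intuitionistic logic. The one point requiring care is keeping straight which negations are classical (around propositions) and which are intuitionistic (around problems), and in particular noticing why the two forward implications cannot be strengthened to equivalences without the hypotheses $p=\wn\alpha$ and $\alpha=\oc p$: the reverse of $\oc\neg p\imp\neg\oc p$, which would say that unprovability of $p$ yields a proof of $\neg p$, fails in general, and this is exactly why the converse assertions are restricted to $p$ in the image of $\wn$ and $\alpha$ in the image of $\oc$.
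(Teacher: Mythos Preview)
Your proof is correct. For part~(a) you follow essentially the same line as the paper: the forward direction uses $\oc\neg p\imp\neg\oc p$ from Proposition~\ref{v*}(b), and the converse for $p=\wn\alpha$ rests on the identification $\oc\neg\wn\alpha\iff\neg\alpha\iff\neg\nabla\alpha$ (your invocation of \ref{insolubility} and \ref{move-nabla} together amounts to the paper's single appeal to \ref{move-nabla}).

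For part~(b) you take a slightly different, more direct route. The paper first argues that if $\alpha$ is stable or decidable then so is the problem $\nabla\alpha$ (using $\neg\nabla\alpha\iff\nabla\neg\alpha$ together with ($1^\nabla$) or ($3^\nabla$)), and then reduces both assertions of~(b) to the already-proved assertions of~(a): the forward direction of~(b) follows from the converse of~(a), and the converse of~(b) from the forward direction of~(a), via $\oc\wn\oc p\iff\oc p$. Your argument instead rewrites both the hypothesis and the conclusion explicitly using \ref{insolubility} and ($\oc\wn$), and observes that they match up (or, for the converse, literally coincide). Both approaches use the same underlying identities; the paper's organization highlights the symmetry between (a) and~(b), while yours makes the strongly internalized implications mentioned after the statement immediately visible.
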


These judgements about the QHC calculus follow from their internalized versions, which will be proved below:
 
\formulas
(a) $\turnstile\Dec(p)\to\Dec(\oc p)$ and $\turnstile\Stab(p)\to\Stab(\oc p)$.

\noindent
Moreover, $\turnstile\Dec(\wn\alpha)\tofrom\Dec(\oc\wn\alpha)$ and 
$\turnstile\Stab(\wn\alpha)\tofrom\Stab(\oc\wn\alpha)$.

(b) $\turnstile\Dec(\alpha)\to\Dec(\wn\alpha)$ and $\turnstile\Stab(\alpha)\to\Stab(\wn\alpha)$.

\noindent
Moreover, $\turnstile\Dec(\oc p)\to\Dec(\wn\oc p)$ and $\turnstile\Stab(\oc p)\to\Stab(\wn\oc p)$.

\begin{proof}[Proof. (a)] The first assertion follows since $\turnstile\oc\neg p\to\neg\oc p$ by \ref{v*}(b).
The moreover assertion follows since $\turnstile\oc\neg\wn\alpha\tofrom\neg\oc\wn\alpha$ by 
\ref{insolubility} and \ref{move-nabla}.
\end{proof}

\begin{proof}[(b)] By \ref{move-nabla},
$\turnstile \neg\nabla\alpha\tofrom\nabla\neg\alpha$.
From this and ($1^\nabla$) or ($4^\nabla$) it follows that
$\turnstile\Dec(\alpha)\to\Dec(\nabla\alpha)$ and $\turnstile\Stab(\alpha)\to\Stab(\nabla\alpha)$.
Now the first assertion of (b) follows from the moreover assertion of (a).
The moreover assertion of (b) follows from the first assertion of (a).
\end{proof}
\metameta

\begin{proposition}\label{stable-decidable2}
(a) C-formulas of the form $\neg\wn\Phi$ are stable.

(b) If $\Phi$ is stable, then $\wn\Phi$ is decidable if and only if
$\Phi$ is decidable.

(c) If $\neg F$ is stable, then $\oc F$ is decidable if and only if $F$
is decidable.
\end{proposition}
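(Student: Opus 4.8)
The plan is to reduce each part to the already-established behaviour of the operator $\nabla=\oc\wn$. The one preparatory remark that drives everything is that, since $\oc$ is monotone and hence respects the equivalence $\iff$, classical double-negation elimination may be applied freely underneath an $\oc$: for any proposition $p$ we have $\oc\neg\neg p\iff\oc p$. In particular, spelling out the definition, the stability of $\neg p$, namely $\neg\oc\neg(\neg p)\imp\oc(\neg p)$, is equivalent to $\neg\oc p\imp\oc\neg p$; and, using \ref{insolubility} (which gives $\oc\neg\wn\alpha\iff\neg\alpha$), the decidability of the proposition $\wn\alpha$, namely $\turnstile\oc\wn\alpha\lor\oc\neg\wn\alpha$, is equivalent to $\turnstile\nabla\alpha\lor\neg\alpha$.

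For (a): by the definition and the remark, $\neg\wn\alpha$ is stable iff $\neg\oc\neg\neg\wn\alpha\imp\oc\neg\wn\alpha$, whose left side is $\neg\oc\wn\alpha=\neg\nabla\alpha$ and whose right side is $\neg\alpha$ by \ref{insolubility}; so the condition reads $\neg\nabla\alpha\imp\neg\alpha$, which holds since $\alpha\imp\nabla\alpha$ (it is half of \ref{move-nabla}).

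For (b): the implication ``$\alpha$ decidable $\imp\wn\alpha$ decidable'' is already part of \ref{stable-decidable}(b) and uses no stability hypothesis. Conversely, assume $\alpha$ stable and $\wn\alpha$ decidable; by the remark, $\turnstile\nabla\alpha\lor\neg\alpha$. By \ref{nabla-negneg1} we have $\nabla\alpha\imp\neg\neg\alpha$, and the stability of $\alpha$ gives $\neg\neg\alpha\imp\alpha$, whence $\nabla\alpha\imp\alpha$; combining this with $\nabla\alpha\lor\neg\alpha$ yields $\turnstile\alpha\lor\neg\alpha$, i.e.\ $\alpha$ is decidable.

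For (c): the implication ``$p$ decidable $\imp\oc p$ decidable'' is part of \ref{stable-decidable}(a); explicitly, $\oc\neg p\imp\neg\oc p$ by \ref{v*}(b) lets one pass from $\turnstile\oc p\lor\oc\neg p$ to $\turnstile\oc p\lor\neg\oc p$. Conversely, assume $\neg p$ stable and $\oc p$ decidable; by the remark, $\neg\oc p\imp\oc\neg p$, so from $\turnstile\oc p\lor\neg\oc p$ we obtain $\turnstile\oc p\lor\oc\neg p$, i.e.\ $p$ is decidable.

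I do not anticipate a real obstacle: each part is an unwinding of definitions combined with \ref{insolubility}, \ref{move-nabla}, \ref{nabla-negneg1}, \ref{v*}(b) and \ref{stable-decidable}. The only place demanding a moment's care is the preparatory remark --- verifying that double-negation elimination is legitimate inside $\oc$, which it is by monotonicity of $\oc$ --- and, as with \ref{stable-decidable}, the same arguments in fact yield the strongly internalized versions obtained by replacing $\turnstile$ with $\imp$.
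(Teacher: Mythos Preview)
Your proof is correct and follows essentially the same approach as the paper. Parts (a) and (c) are virtually identical to the paper's arguments: the paper's proof of (a) is the chain $\neg\oc\neg\neg\wn\alpha\iff\neg\oc\wn\alpha\iff\oc\neg\wn\alpha$ (via classical double negation under $\oc$ and \ref{move-nabla}), and (c) is exactly your observation that stability of $\neg p$ amounts to $\neg\oc p\iff\oc\neg p$. For (b) there is a minor stylistic difference: the paper substitutes $\alpha\iff\neg\beta$ (with $\beta=\neg\alpha$) and then uses $\nabla\neg\beta\iff\neg\beta$ from \ref{move-nabla} to obtain the single chain $\nabla\alpha\lor\neg\nabla\alpha\iff\neg\beta\lor\neg\neg\beta\iff\alpha\lor\neg\alpha$, handling both directions at once, whereas you treat the directions separately and use $\nabla\alpha\imp\neg\neg\alpha$ together with stability for the nontrivial one; the underlying facts are the same.
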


\formulas
We will prove the internalizations: (a) $\turnstile\Stab(\neg\wn\alpha)$;

(b) $\turnstile\Stab(\alpha)\to\big(\Dec(\wn\alpha)\tofrom\Dec(\alpha)\big)$;

(c) $\turnstile\Stab(\neg p)\to\big(\Dec(\oc p)\tofrom\Dec(p)\big)$.

\begin{proof}[Proof. (a)]
By the classical double negation law,
$\turnstile\neg\oc\neg\neg\wn\alpha\tofrom\neg\oc\wn\alpha$ and by \ref{insolubility} and \ref{move-nabla},
also $\turnstile\neg\oc\wn\alpha\tofrom\oc\neg\wn \alpha$.
Thus $\turnstile\neg\oc\neg(\neg\wn\alpha)\to\oc(\neg\wn \alpha)$.
\end{proof}

\begin{proof}[(b)] Assuming $\alpha\tofrom\neg\neg\alpha$, and writing $\beta=\neg\alpha$,
from $\turnstile\nabla\neg\beta\lor\neg\nabla\neg\beta\tofrom\neg\beta\lor\neg\neg\beta$
we get $\nabla\alpha\lor\neg\nabla\alpha\tofrom\alpha\lor\neg\alpha$.
This shows that $\turnstile\Stab(\alpha)\to\big(\Dec(\nabla\alpha)\tofrom\Dec(\alpha)\big)$,
and the assertion now follows from \ref{stable-decidable}(b).
\end{proof}

\begin{proof}[(c)] Clearly, $\turnstile\Dec(\neg p)\tofrom(\neg\oc p\tofrom\oc\neg p)$,
and the assertion follows.
\end{proof}
\metameta

\begin{proposition} \label{nabla-negneg2}
$\turnstile\neg\neg\Phi\tofrom\nabla\Phi$ if and only if $\wn\Phi$ is stable.
\end{proposition}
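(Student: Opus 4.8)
The plan is to reduce the biconditional to its non-trivial half using results already in hand, after which it becomes a purely tautological reformulation.

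First I would observe that the implication $\nabla\alpha\imp\neg\neg\alpha$ holds unconditionally in QHC, by Corollary \ref{nabla-negneg1}. Hence the equivalence $\neg\neg\alpha\iff\nabla\alpha$ holds exactly when the reverse implication $\neg\neg\alpha\imp\nabla\alpha$ does, and it is this implication that I will analyse.

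Next I would rewrite the antecedent $\neg\neg\alpha$ by means of Proposition \ref{insolubility}, which gives $\neg\alpha\iff\oc\neg\wn\alpha$; applying classical negation to both sides of this equivalence (legitimate, since $p\iff q$ yields $\neg p\iff\neg q$) we get $\neg\neg\alpha\iff\neg\oc\neg\wn\alpha$. Substituting this, and recalling $\nabla\alpha=\oc\wn\alpha$, the implication $\neg\neg\alpha\imp\nabla\alpha$ turns into $\neg\oc\neg\wn\alpha\imp\oc\wn\alpha$, which is precisely the definition of the proposition $\wn\alpha$ being stable. This settles both directions at once.

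The only step carrying any content here is the appeal to \ref{insolubility} to identify $\neg\neg\alpha$ with $\neg\oc\neg\wn\alpha$; everything else is bookkeeping, so I do not expect a genuine obstacle. The one point to keep straight is the reading of the statement: the argument above establishes the meta-level claim (the judgement $\turnstile\neg\neg\alpha\tofrom\nabla\alpha$ holds for a given $\alpha$ iff the judgement $\neg\oc\neg\wn\alpha\imp\oc\wn\alpha$ does), and if a strongly internalized form is wanted it follows by running \ref{nabla-negneg1} and \ref{insolubility} in their internalized versions.
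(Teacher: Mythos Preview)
Your argument is correct and essentially the same as the paper's: both invoke Corollary~\ref{nabla-negneg1} for the unconditional direction and then rewrite $\neg\neg\alpha$ to match a stability condition. The only differences are cosmetic --- the paper uses Corollary~\ref{move-nabla} to reach stability of the \emph{problem} $\nabla\alpha$ and then passes to stability of $\wn\alpha$, whereas your appeal to Proposition~\ref{insolubility} arrives at the latter directly --- and note that the negation you apply to both sides of $\neg\alpha\iff\oc\neg\wn\alpha$ is intuitionistic (both sides are problems), not classical as you wrote, though the inference $\gamma\iff\delta\ \Rightarrow\ \neg\gamma\iff\neg\delta$ is valid either way.
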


\formulas
We will prove the internalization:
$\turnstile (\neg\neg\alpha\tofrom\nabla\alpha)\tofrom\Stab(\wn\alpha)$.

\begin{proof} $\nabla\alpha\to\neg\neg\alpha$ is derivable \ref{nabla-negneg1}.
The converse implication, $\neg\neg\alpha\to\nabla\alpha$, is equivalent by \ref{move-nabla}
to $\Stab(\nabla\alpha)$, which by \ref{stable-decidable}(b) is in turn equivalent to $\Stab(\wn\alpha)$.
\end{proof}
\metameta

\subsection{Semi-stability and semi-decidability}
Let us call an i-formula $\Phi$ {\it semi-decidable} if $\turnstile\wn\Dec(\Phi)$, and {\it semi-stable} if
$\turnstile\wn\Stab(\Phi)$.
Similarly, we call a c-formula $F$ {\it semi-decidable} if $\turnstile\wn\Dec(F)$, and {\it semi-stable} if
$\turnstile\wn\Stab(F)$.
Here each ``$\wn$'' can be replaced by ``$\nabla$'' due to $\fm{\turnstile\gamma\iff\oc\gamma}$.
Stability or decidability implies semi-stability or semi-decidability (both for i-formulas and for
c-formulas) due to $\fm{\gamma\turnstile\wn\gamma}$.
Semi-decidability implies semi-stability (both for i-formulas and c-formulas) for the same
reasons that decidability implies stability.

\begin{remark}\label{stable-decidable-bis}
Since \ref{stable-decidable} holds in the internalized form, we can
apply ($\wn_\top$) and ($\wnto$) to obtain the literal analogue of \ref{stable-decidable}
for semi-stability and semi-decidability, in the internalized form.
\end{remark}

\begin{proposition} \label{negation-commutes}
(a) An i-formula $\Phi$ is semi-decidable if and only if $\turnstile \wn\neg\Phi\tofrom\neg\wn\Phi$.

(b) A c-formula $\neg F$ is stable if and only if $\turnstile \oc\neg F\tofrom\neg\oc F$.
\end{proposition}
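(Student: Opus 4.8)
The plan is to handle the two parts separately, in each case reducing the biconditional to the one-way implication already recorded in Proposition~\ref{v*} together with a little elementary reasoning on the image of $\wn$ (resp.\ $\oc$).

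For (a), I would first unwind the definition of semi-decidability. Since ($\wn_\lor$) is available (it was shown redundant in \ref{symmetry}(a)), we have $\wn(\alpha\lor\neg\alpha)\iff\wn\alpha\lor\wn\neg\alpha$, so $\alpha$ is semi-decidable iff $\turnstile\wn\alpha\lor\wn\neg\alpha$. Now $\wn\alpha$ and $\wn\neg\alpha$ are propositions, so the classically valid equivalence of $a\lor b$ with $\neg a\to b$ rewrites this condition as $\neg\wn\alpha\imp\wn\neg\alpha$. On the other hand, the reverse implication $\wn\neg\alpha\imp\neg\wn\alpha$ holds unconditionally in QHC by \ref{v*}(a). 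Combining the two implications yields $\wn\neg\alpha\iff\neg\wn\alpha$; and conversely that equivalence trivially supplies $\neg\wn\alpha\imp\wn\neg\alpha$. Hence $\alpha$ is semi-decidable iff $\wn\neg\alpha\iff\neg\wn\alpha$.

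For (b) I would argue along the ``dual'' lines. By definition, $\neg p$ is stable iff $\neg\oc\neg\neg p\imp\oc\neg p$. The one point worth isolating is that, since $p$ is a classical proposition, $\neg\neg p\tofrom p$ is a classical law, so applying ($\oc_\top$) and ($\octo$) to its two halves gives $\oc\neg\neg p\iff\oc p$, whence $\neg\oc\neg\neg p\iff\neg\oc p$. Thus stability of $\neg p$ is equivalent to $\neg\oc p\imp\oc\neg p$; and since $\oc\neg p\imp\neg\oc p$ holds unconditionally by \ref{v*}(b), it is equivalent to $\oc\neg p\iff\neg\oc p$. The only mildly non-routine ingredient is the identity $\oc\neg\neg p\iff\oc p$ in part (b); everything else is a direct combination of \ref{v*}, ($\wn_\lor$), and elementary classical logic, so I anticipate no real obstacle.
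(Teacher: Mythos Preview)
Your proposal is correct and follows essentially the same approach as the paper. For (a) the paper also reduces semi-decidability to $\turnstile\wn\alpha\lor\wn\neg\alpha$ via ($\wn_\lor$) and then uses the law of excluded middle to pass between $\wn\alpha\lor\wn\neg\alpha$ and $\neg\wn\alpha\to\wn\neg\alpha$, which is exactly your use of the classical equivalence $a\lor b\iff(\neg a\to b)$; for (b) the paper simply remarks that it is a reformulation of the definition, and your unpacking via $\oc\neg\neg p\iff\oc p$ makes that remark explicit.
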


These hold internally:
\formulas

(a) $\turnstile\wn\Dec(\alpha)\tofrom(\wn\neg\alpha\tofrom\neg\wn\alpha)$;

(b) $\turnstile\Stab(\neg p)\tofrom(\oc\neg p\tofrom\neg\oc p)$.

\noindent
Part (b) is trivial.

\begin{proof}[Proof of (a)]
$\neg\wn\alpha\to\wn\neg\alpha$ is classically equivalent to $\turnstile\wn\alpha\lor\wn\neg\alpha$, which is 
in turn equivalent to $\turnstile\wn(\alpha\lor\neg\alpha)$.
\end{proof}

\metameta

\begin{proposition}\label{semi-stable-decidable}
(a) A c-formula $F$ is semi-stable if and only if it is stable.

(b) A i-formula $\Phi$ is semi-decidable if and only if the c-formula $\wn\Phi$ is.
\end{proposition}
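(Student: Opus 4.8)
The plan is to deduce both parts from facts already established, after rewriting the definitions in a more convenient form. Two reformulations do the work: (i) since $(\oc_\top)$ is reversible, $\turnstile\wn\gamma$ holds iff $\turnstile\nabla\gamma$ does, so ``semi-stable'' and ``semi-decidable'' may be stated with $\nabla$ in place of $\wn$ (as already remarked right after the definitions); and (ii) by \ref{insolubility} we may replace $\oc\neg\wn\alpha$ by $\neg\alpha$ wherever it occurs.

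For part (a): by definition $p$ is semi-stable iff $\turnstile\wn(\neg\oc\neg p\to\oc p)$, equivalently iff $\turnstile\nabla(\neg\oc\neg p\to\oc p)$. Here $\neg\oc\neg p\to\oc p$ is a problem whose consequent $\oc p$ has the form $\oc q$, so \ref{move-oc-wn}(b), applied with $\alpha:=\neg\oc\neg p$ and $\beta:=\oc p$, gives $\nabla(\neg\oc\neg p\to\oc p)\iff(\neg\oc\neg p\to\oc p)$. Hence $p$ is semi-stable iff $\turnstile(\neg\oc\neg p\to\oc p)$, which by definition means that $p$ is stable. (The implication stable $\Rightarrow$ semi-stable is in any case already subsumed by the general fact $\gamma\turnstile\wn\gamma$, so only the converse is the point.)

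For part (b): by definition $\wn\alpha$ is semi-decidable iff $\turnstile\wn(\oc\wn\alpha\lor\oc\neg\wn\alpha)$. Since $\oc\wn\alpha=\nabla\alpha$ and $\oc\neg\wn\alpha\iff\neg\alpha$ by \ref{insolubility}, monotonicity of $\wn$ turns this into $\turnstile\wn(\nabla\alpha\lor\neg\alpha)$. On the other hand $\nabla\neg\alpha\iff\neg\alpha$ by \ref{move-nabla}, so $\wn(\nabla\alpha\lor\neg\alpha)\iff\wn(\nabla\alpha\lor\nabla\neg\alpha)$, and the right-hand side is equivalent to $\wn(\alpha\lor\neg\alpha)$ by \ref{Goedel+Kuroda}(b). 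Therefore $\wn\alpha$ is semi-decidable iff $\turnstile\wn(\alpha\lor\neg\alpha)$, i.e.\ iff $\alpha$ is semi-decidable. (Alternatively one can observe that both conditions are equivalent to $\turnstile\wn\alpha\lor\wn\neg\alpha$, using $(\wn_\lor)$ together with $\wn\nabla\alpha=\wn\oc\wn\alpha\iff\wn\alpha$.)

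I do not expect a genuine obstacle: the argument is a short chain of substitutions into \ref{move-oc-wn}(b), \ref{insolubility}, \ref{move-nabla} and \ref{Goedel+Kuroda}(b). The only thing requiring care is bookkeeping of formula types --- keeping track of which of the two negations, and of $\oc$, $\wn$, $\nabla$, are being applied, so that every subexpression is legal --- and the passage between the $\wn$-form and the $\nabla$-form of the definitions via the reversibility of $(\oc_\top)$.
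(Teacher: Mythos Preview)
Your proposal is correct and essentially follows the paper's approach. For (a) you cite \ref{move-oc-wn}(b), which packages precisely the step the paper carries out inline (applying $(4^\nabla)$ and \ref{move-nabla} to strip the outer $\nabla$); for (b) your alternative route via $(\wn_\lor)$ and $\wn\oc\wn\alpha\iff\wn\alpha$ is exactly the paper's argument, while your main route via \ref{Goedel+Kuroda}(b) is the same computation cited one level higher.
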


These hold internally:

\formulas
(a) $\turnstile\wn\Stab(p)\iff\Stab(p)$;

(b) $\turnstile\wn\Dec(\wn\alpha)\tofrom\wn\Dec(\alpha)$.

\begin{proof}[Proof. (a)] 
By ($\oc_\top$), $\wn\Stab(p)\turnstile\nabla\Stab(p)$.
On the other hand, by ($4^\nabla$) and \ref{move-nabla} we also have
$\turnstile\nabla(\neg\oc\neg p\to\oc p)\to(\neg\oc\neg p\to\oc p)$, that is,
$\turnstile\nabla\Stab(p)\to\Stab(p)$.
\end{proof}

\begin{proof}[(b)] Using ($\wn_\lor$) and \ref{insolubility}, we get
$\turnstile\wn(\oc\wn\alpha\lor\oc\neg\wn\alpha)\tofrom(\wn\alpha\lor\wn\neg\alpha)$, and using ($\wn_\lor$) again,
we get $\turnstile(\wn\alpha\lor\wn\neg\alpha)\tofrom\wn(\alpha\lor\neg\alpha)$, as desired.
\end{proof}
\metameta

\formulas
\begin{corollary} \label{nabla-stable-decidable}
$\turnstile\nabla\Dec(p)\to\Stab(p)$.
\end{corollary}

This is a strengthening of ``decidability implies stability'' for c-formulas.

\begin{proof}
Decidability does imply stability: $\turnstile\Dec(p)\to\Stab(p)$.
Hence $\turnstile\nabla\Dec(p)\to\nabla\Stab(p)$.
On the other hand, by \ref{semi-stable-decidable}(a), $\turnstile\nabla\Stab(p)\to\Stab(p)$.
\end{proof}

\begin{corollary} \label{pushout formula}
$\turnstile\nabla(\nabla\alpha\lor\neg\nabla\alpha)\To(\nabla\alpha\tofrom\neg\neg\alpha)$.
\end{corollary}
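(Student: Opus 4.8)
The plan is to rewrite the hypothesis into a form to which \ref{nabla-stable-decidable} applies, and then to read off the conclusion from the strongly internalized version of \ref{nabla-negneg2}. The key preliminary observation is that, inside $\nabla(\nabla\alpha\lor\neg\nabla\alpha)$, the disjunct $\neg\nabla\alpha$ equals $\oc\neg\wn\alpha$ up to equivalence. Indeed, $\nabla\alpha=\oc p$ with $p=\wn\alpha$, and by \ref{stable-decidable2}(a) the proposition $\neg p=\neg\wn\alpha$ is stable; hence \ref{negation-commutes}(b) yields $\oc\neg p\iff\neg\oc p$, that is, $\oc\neg\wn\alpha\iff\neg\nabla\alpha$. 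Substituting this equivalence into the disjunction and applying $\nabla$, I get
$$\nabla(\nabla\alpha\lor\neg\nabla\alpha)\iff\nabla(\oc p\lor\oc\neg p).$$

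Next I would invoke \ref{nabla-stable-decidable} for this $p$, obtaining $\nabla(\oc p\lor\oc\neg p)\imp(\neg\oc\neg p\to\oc p)$, i.e.
$$\nabla(\nabla\alpha\lor\neg\nabla\alpha)\imp\bigl(\neg\oc\neg\wn\alpha\to\oc\wn\alpha\bigr).$$
It now suffices to identify the consequent of this implication with $\nabla\alpha\tofrom\neg\neg\alpha$. This is precisely the strongly internalized form of \ref{nabla-negneg2} recorded right after that proposition, $(\neg\neg\alpha\tofrom\nabla\alpha)\iff(\neg\oc\neg\wn\alpha\to\oc\wn\alpha)$, so we are done. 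If one prefers not to quote that internalized statement, the same identification is immediate by hand: using $\oc\neg\wn\alpha\iff\neg\nabla\alpha$ once more and then $\neg\nabla\alpha\iff\neg\alpha$ from \ref{move-nabla}, the consequent $\neg\oc\neg\wn\alpha\to\oc\wn\alpha$ reads $\neg\neg\alpha\to\nabla\alpha$, and combining it with the unconditional $\nabla\alpha\imp\neg\neg\alpha$ of \ref{nabla-negneg1} gives the biconditional $\nabla\alpha\tofrom\neg\neg\alpha$.

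I do not anticipate a real obstacle: each step is a direct appeal to an already established fact. The one point deserving care is the use of \ref{negation-commutes}(b), which requires stability of $\neg\wn\alpha$ specifically --- not of an arbitrary negated proposition --- and that is exactly the content of \ref{stable-decidable2}(a). Read in the other direction, the argument shows that $\nabla(\nabla\alpha\lor\neg\nabla\alpha)$ forces $\wn\alpha$ (equivalently $\nabla\alpha$) to be stable, so the corollary records when the closure operator $\nabla$ coincides with $\neg\neg$ on the given $\alpha$.
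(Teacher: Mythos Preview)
Your proof is correct and follows essentially the same route as the paper: rewrite the hypothesis as $\nabla(\oc p\lor\oc\neg p)$ with $p=\wn\alpha$, apply \ref{nabla-stable-decidable}, then invoke the internalized \ref{nabla-negneg2}. The only cosmetic difference is in the first step: the paper cites the internalized converse of \ref{stable-decidable}(a) to get $\oc p\lor\neg\oc p\iff\oc p\lor\oc\neg p$ directly, whereas you obtain the pointwise equivalence $\neg\oc p\iff\oc\neg p$ via \ref{stable-decidable2}(a) and \ref{negation-commutes}(b) and then substitute --- but these unwind to the same underlying facts.
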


This will be used in \S\ref{Tarski-Kolmogorov1} to show that the classical $\neg\neg$-translation of
QC into QH cannot be improved in a certain sense.

\begin{proof} By \ref{stable-decidable}(a), $\turnstile\nabla\Dec(\nabla\alpha)\tofrom\nabla\Dec(\wn\alpha)$.
By \ref{nabla-stable-decidable}, $\turnstile\nabla\Dec(\wn\alpha)\to\Stab(\wn\alpha)$.
By \ref{nabla-negneg2}, $\turnstile\Stab(\wn\alpha)\tofrom (\neg\neg\alpha\tofrom\nabla\alpha)$.
\end{proof}

\metameta

\section{Syntactic interpretations} \label{syntactic}

\subsection{$\Box$-interpretation}\label{Euler-Tarski1}
The classical provability translation of QH in QS4 (see \cite{M0}*{\S\ref{int:provability}})
extends to the following syntactic {\it $\Box$-interpretation} of QHC in QS4, denoted by
$A\mapsto A_\Box$:
\begin{itemize}
\item Atomic c-formulas and classical connectives remain unchanged;
\item Atomic i-formulas are re-typed as atomic c-formulas and are prefixed by $\Box$;
\item Intuitionistic $\land$, $\lor$ and $\exists$ become classical, and $\ab$, $\triv$ are replaced by 
$\clbot$, $\cltop$;
\item Intuitionistic $\to$ and $\forall$ become classical and are prefixed by $\Box$;
\item $\wn$ is erased, and $\oc$ is replaced by $\Box$.
\end{itemize}

\formulas
Indeed, let us write out the images of the laws and inference rules in \ref{simplified}
under the $\Box$-interpretation:

($\wn_\top$) $\alpha\,/\,\wn\alpha$ becomes $\Box a\,/\,\Box a$;

($\oc_\top$) $p\,/\,\oc p$ becomes $p\,/\,\Box p$;

($\wn\oc$) $\prin\wn\oc p\to p$ becomes $\prin\Box p\to p$;

($\oc\wn$) $\prin\alpha\to\nabla\alpha$ becomes $\prin\Box(\Box a\to\Box\Box a)$;

($\octo$) $\prin\oc(p\to q)\to(\oc p\to\oc q)$ becomes
$\prin\Box\big(\Box(p\to q)\to\Box(\Box p\to\Box q)\big)$;

($\wnto$) $\prin\wn(\alpha\to\beta)\to(\wn\alpha\to\wn\beta)$ becomes 
$\prin\Box(\Box a\to\Box b)\to(\Box a\to\Box b)$;

($\oc_\bot$) $\prin\oc\clbot\to\clbot$ becomes $\prin\Box(\Box \clbot\to \clbot)$.

\noindent
The resulting rules and principles are easily derivable in QS4, including the last one, which is the principle of 
internal consistency (see \cite{M0}*{\S\ref{int:provability}}).
\metameta

The classical laws and inference rules of QHC hold under the $\Box$-interpretation since
it does nothing to classical connectives and quantifiers and to atomic c-formulas.
The intuitionistic laws and inference rules of QHC hold under the $\Box$-interpretation
since the restriction of the $\Box$-interpretation to QH is known to be
an interpretation (see \cite{M0}*{\S\ref{int:provability}}).

Finally, let us note that by an inductive argument based on
\cite{M0}*{\ref{int:QS4-McKinsey}}, $\turnstile\Phi_\Box\tofrom\Box\Phi_\Box$ for any i-formula $\Phi$.
It follows that the second-order meta-specialization of the intuitionistic type holds under 
the $\Box$-interpretation.
The other meta-rules hold under the $\Box$-interpretation for trivial reasons.

We have proved

\begin{theorem} \label{box-int} (a) If $A_1,\dots,A_n\turnstile _{QHC} A$, then
$(A_1)_\Box,\dots,(A_n)_\Box\turnstile _{QS4} A_\Box$, and the converse holds (trivially)
if $A_1,\dots,A_n,A$ are formulas of QS4.

(b) If a formula $A$ is derivable in QHC from $\prin A_1,\dots,\prin A_n$, then $A_\Box$
is derivable in QS4 from $\prin(A_1)_\Box,\dots,\prin(A_n)_\Box$.
\end{theorem}

Of course, by \cite{M0}*{\ref{int:QS4-McKinsey}}, all of the intuitionistic
connectives and quantifiers (and not only $\to$ and $\forall$) could be prefixed by
a $\Box$ in the definition of the $\Box$-interpretation.
Consequently, by \cite{M0}*{\ref{int:QS4-Goedel}}, one could alternatively formulate
the $\Box$-translation as an extension of G\"odel's original
provability translation: {\it post}fix by $\Box$'es the intuitionistic
$\lor$, $\exists$ and $\to$, erase every $\oc$, and replace every $\wn$
by a $\Box$.
Like before, all intuitionistic connectives and quantifiers become classical, and
all atomic i-formulas are re-typed as atomic c-formulas.

\begin{theorem} \label{conservative}
The QHC calculus is:

(a) a strongly conservative extension of classical predicate calculus QC;

(b) a strongly conservative extension of QS4, via $\Box\mapsto\wn\oc$.
\end{theorem}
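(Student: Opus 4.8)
The plan is to derive both parts from the $\Box$-interpretation theorem just proved, by producing in each case a \emph{retraction}: a syntactic interpretation in the opposite direction whose composite with the relevant embedding is provably --- in fact, literally --- the identity.

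For (b), recall that sending $\Box$ to $\wn\oc$ and fixing atomic propositions and the classical connectives defines a syntactic interpretation $A\mapsto A'$ of QS4 in QHC that is the identity on QC. The point is that the $\Box$-interpretation $B\mapsto B_\Box$ of QHC in QS4 undoes it exactly. Indeed, by induction on the QS4-formula $A$: atomic propositions and classical connectives are fixed by both translations, and for $A=\Box B$ one has $(\Box B)'=\wn\oc B'$, which the $\Box$-interpretation --- erasing the $\wn$ and turning the $\oc$ into $\Box$ --- sends to $\Box(B')_\Box=\Box B$ by the inductive hypothesis. Hence $(A')_\Box=A$ for every QS4-formula, and the same holds for schemata once one checks, routinely from the definitions, that the $\Box$-interpretation commutes with substitution. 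Now if $(A_1)',\dots,(A_n)'\turnstile_{QHC}A'$, the $\Box$-interpretation theorem gives $((A_1)')_\Box,\dots,((A_n)')_\Box\turnstile_{QS4}(A')_\Box$, that is, $A_1,\dots,A_n\turnstile_{QS4}A$; together with the fact that $A\mapsto A'$ is an interpretation, this is (b).

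For (a), compose the $\Box$-interpretation of QHC in QS4 with the retraction $\rho$ of QS4 onto QC that erases every $\Box$. That $\rho$ preserves the consequence relation is immediate: it sends every instance of a classical axiom scheme to another such instance and commutes with modus ponens and generalization; it sends the modal schemes $\Box p\to p$, $\Box p\to\Box\Box p$ and $\Box(p\to q)\to(\Box p\to\Box q)$ to instances of $r\to r$ and $(r\to s)\to(r\to s)$; and it collapses the necessitation rule $B/\Box B$ to $B/B$. On a proposition $S$ free of $\wn$ and $\oc$ the $\Box$-interpretation acts trivially --- it affects only $\bot$, atomic problems, the intuitionistic connectives, $\wn$ and $\oc$, none of which occur in $S$ --- so $\rho(S_\Box)=\rho(S)=S$ and the composite is a retraction of QHC onto QC. Hence if $S_1,\dots,S_n\turnstile_{QHC}S$ with $S_1,\dots,S_n,S$ all free of $\wn$ and $\oc$, then applying the $\Box$-interpretation theorem and then $\rho$ yields $S_1,\dots,S_n\turnstile_{QC}S$, which is (a). (One may equally well view (a) as a consequence of (b) together with the classical conservativity of QS4 witnessed by $\rho$.)

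The conceptual content lies entirely in these compositions being the identity; what remains is bookkeeping to make ``strongly'' literal. One must check that each of the three translations $A\mapsto A'$, $A\mapsto A_\Box$, $\rho$ is defined on \emph{derivations} and not only on provable formulas: that it respects the side conditions built into the quantifier rules under the paper's metavariable conventions, and that it carries schemata to schemata (which is where commutation with substitution enters). The one genuinely delicate point is that the restricted applicability of necessitation in QS4 --- to theorems, not to formulas depending on open hypotheses --- must survive the passage of a QHC-derivation down to QS4 via $A\mapsto A_\Box$; but this is precisely why the $\Box$-interpretation theorem is phrased for the consequence relation $\turnstile$, so here it can be invoked as a black box and nothing new need be proved. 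The only real work is thus verifying the two ``identity'' inductions and the attendant schema bookkeeping.
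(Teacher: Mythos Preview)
Your proof is correct and follows exactly the paper's approach: for (b) you show that the QS4-in-QHC interpretation composed with the $\Box$-interpretation is the identity on QS4, and for (a) you further compose with the $\Box$-erasure retraction of QS4 onto QC. The paper's proof is a two-sentence version of yours; your added detail (the induction on formulas, the verification that $\rho$ preserves derivability, the remarks on schemata and necessitation) is all sound but not required beyond what the paper assumes as routine.
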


Here a formula of QHC is regarded as a formula of QS4 if it involves only classical atoms, connectives and
quantifiers, as well as the combination $\Box=\wn\oc$ (but not $\wn$ and $\oc$ alone).

Strong conservativity in (b) means that if a derivable rule of QHC is expressed in the language of QS4, then 
it is derivable in QS4.

\begin{proof} Since the standard interpretation of QS4 in QHC
(see \ref{modalities}) composed with the $\Box$-interpretation
is the identity, we get (b).
Omitting each $\Box$ is clearly an interpretation of QS4 in QC
that restricts to the identity on QC.
Thus QS4 is a strongly conservative extension of QC, and we obtain (a).
\end{proof}

\subsection{$\nabla$-interpretation}\label{nabla}

By Theorem \ref{conservative}(b), the $\Box$-interpretation of QHC in
QS4 can be regarded as an interpretation of QHC in itself.
This does not preserve the types of formulas (i.e., i-formulas
versus c-formulas), but can be amended to do so.
This results in the following {\it $\nabla$-interpretation} of QHC
in itself, which restricts to an unintended embedding of QH in QHC:

\begin{theorem} \label{nabla-interpretation}
If $A$ is a formula of QHC, let $A_\nabla$ be the formula
of QHC obtained from $A$ by prefixing atomic i-formulas and
the intuitionistic $\lor$ and $\exists$ by $\nabla:=\oc\wn$.
Then 

(a) $\turnstile A$ implies $\turnstile A_\nabla$, and the converse holds when $A$ is a formula of QH
or (trivially) of QS4;

(b) if $A_1,\dots,A_n\turnstile A$, then $(A_1)_\nabla,\dots,(A_n)_\nabla\turnstile A_\nabla$;

(c) if a formula $A$ is derivable in QHC from $\prin A_1,\dots,\prin A_n$, then $A_\nabla$
is derivable in QHC from $\prin(A_1)_\nabla,\dots,\prin(A_n)_\nabla$.
\end{theorem}

Let us note that (a) is only a meta-judgement, that is, it does
not claim that $A\turnstile A_\nabla$ in QHC, nor the converse
when $A$ is a formula of QH.
In fact these claims are false as we will see in \cite{M2}*{Remark \ref{g2:nabla-meta}}.

Of course, by \ref{McKinsey+Gentzen}(d,e,f) we may redefine
the $\nabla$-interpretation $A\mapsto A_\nabla$, without changing its effect, so as to prefix all
intuitionistic connectives and quantifiers of $A$ (not just $\lor$ and $\exists$)
and all atomic i-formulas by $\nabla$.
Alternatively, by \ref{Goedel+Kuroda}(a,b,c) we might redefine
the $\nabla$-interpretation $A\mapsto A_\nabla$, without changing its effect, so as to prefix
the entire formula $A$, if it represents a i-formula, by $\nabla$,
and {\it post}fix every intuitionistic $\to$ and $\forall$ by $\nabla$'s
(atomic subformulas are now kept intact).

\begin{proof}[Proof. (a)]
Let $A_\Box$ be the $\Box$-interpretation of $A$ regarded
as a formula of QHC, by identifying $\Box$ with $\wn\oc$.
Thus $A_\Box$ can be obtained from $A$ by first erasing every $\wn$ and
replacing every $\oc$ by $\wn\oc$, then prefixing all atomic i-formulas and all intuitionistic connectives 
and quantifiers by $\wn\oc$, and finally retyping all atomic i-formulas as c-formulas and replacing 
all intuitionistic connectives and quantifiers by the corresponding classical ones.
By \ref{box-int}, $\turnstile A$ implies
$\turnstile A_\Box$, and (since the classical provability translation of QH in QS4 is faithful) 
the converse holds when $A$ is a formula of QH.
Let $A_\Box'$ denote the formula of QHC obtained from $A$ by first erasing
every $\wn$ and replacing every $\oc$ by $\wn\oc$, then prefixing all atomic i-formulas and all occurrences of
$\ab$ and $\triv$ by $\wn$ and all other intuitionistic connectives and quantifiers by $\wn\oc$, and finally 
replacing all intuitionistic connectives and quantifiers except $\ab$ and $\triv$ by the corresponding 
classical ones.
Then $\turnstile A_\Box'$ implies $\turnstile A_\Box$ by
substituting the $\oc$-images of atomic c-formulas for the atomic
i-formulas of $A_\Box'$.
The converse implication follows by substituting $\wn$-images
of atomic i-formulas for the atomic c-formulas of $A_\Box$
and using $\turnstile \wn\oc\wn\alpha\tofrom\wn\alpha$.

On the other hand, as observed above, we may assume $A\mapsto A_\nabla$ to
prefix all intuitionistic connectives and quantifiers of $A$ (not just $\lor$ and
$\exists$) and all atomic i-formulas by $\oc\wn$.
Let $A_\nabla'$ denote $A_\nabla$ if $A$ is a c-formula, and $\wn A_\nabla$ if 
$A$ is an i-formula.
If $\Phi$ is an i-formula, then $\Phi_\nabla$ is an i-formula of the form $\oc\wn\Psi$.
In this case, we have $\Phi_\nabla\turnstile\wn\Phi_\nabla$ by ($\wn_\top$), and
conversely $\wn\Phi_\nabla\turnstile\Phi_\nabla$ since $\wn\Psi\turnstile\oc\wn\Psi$ by ($\oc_\top$).
Thus $\turnstile A_\nabla\iff A_\nabla'$ for any formula $A$.

Each judgement of QHC of the form $\turnstile A$ corresponds to a rooted tree whose root is labelled 
with $\turnstile$, whose leaves are labelled with the atomic subformulas of $A$ or with nullary connectives,
and whose other vertices are labelled with the unary and binary connectives and the quantifiers of $A$.
We can draw this tree on the plane so that every connective or quantifier is drawn above those in 
the subformulas that it applies to.
(Thus the root is at the top, and the leaves are in the bottom.)
Then $\wn$'s and $\oc$'s alternate along every path that is vertical
(in the sense that its projection to the vertical axis is a monotone function).
Hence $\wn$'s and $\oc$'s partition the tree into intuitionistic
fragments, bounded below by $\oc$'s or atomic i-formulas or $\ab$ or $\triv$, and above
by a $\wn$ or by the $\turnstile$; and classical fragments, bounded
below by $\wn$'s or atomic c-formulas or $\clbot$ or $\cltop$, and above by an $\oc$ or
by the $\turnstile$.

To analyze the difference between $A_\Box'$ and $A'_\nabla$,
we can use the tree of $A$ and write any prefix added to
a connective or quantifier of $A$ on the edge just above the vertex that
it labels, in respective order.
Then the difference is confined to the intuitionistic fragments of
the tree of $A$, and is that $A_\Box'$ has an extra $\wn$ (with respect to
$A_\nabla'$) just below each intuitionistic
connective and quantifier and a missing $\wn$ just above it.
Then we may push the extra $\wn$'s up using ($\wn_\land$),
($\wn_\lor$) and ($\wn_\exists$) as well as \ref{worst}(c,d), thus
obtaining that $\turnstile A'_\Box\tofrom A'_\nabla$.
(Alternatively, one can push $\oc$'s down, using ($\oc_\land$) and ($\oc_\forall$) as well as
\ref{move-oc-wn}(a) and \ref{worst}(a,b).)
\end{proof}

\begin{proof}[(b,c)]
By (a) all laws of QHC hold under the $\nabla$-interpretation.
The inference rules of QHC:
\formulas
$\dfrac{\alpha,\alpha\to\beta}{\beta}$, $\dfrac{\alpha(\tr x)}{\forall\tr x\,\alpha(\tr x)}$,
$\dfrac{p,p\to q}{q}$, $\dfrac{p(\tr x)}{\forall\tr x\,p(\tr x)}$, 
$\dfrac{\alpha}{\wn\alpha}$, $\dfrac{p}{\oc p}$
\metameta 
--- are easily seen to hold under the $\nabla$-interpretation, as they do not involve intuitionistic $\lor$ and $\exists$.

Finally, let us note that by an inductive argument based on
\ref{McKinsey+Gentzen}(d,e,f), $\turnstile\Phi_\nabla\tofrom\nabla\Phi_\nabla$ for any i-formula $\Phi$.
It follows that the second-order meta-specialization of the intuitionistic type holds under 
the $\nabla$-interpretation.
The other meta-rules hold under the $\nabla$-interpretation for trivial reasons.
\end{proof}

\subsection{$\neg\neg$-interpretation}\label{Tarski-Kolmogorov1}
The classical $\neg\neg$-translation of QC in QH (see \cite{M0}*{\S\ref{int:negneg}}) extends to 
the following syntactic {\it $\neg\neg$-interpretation} of QHC in QH, denoted by
$A\mapsto A_{\neg\neg}$:
\begin{itemize}
\item Atomic i-formulas and intuitionistic connectives remain unchanged;
\item Atomic c-formulas are re-typed as atomic i-formulas and are prefixed by $\neg\neg$;
\item Classical $\land$, $\to$ and $\forall$ become intuitionistic, and $\clbot$, $\cltop$ are replaced by 
$\ab$, $\triv$;
\item Classical $\lor$ and $\exists$ become intuitionistic and are prefixed by $\neg\neg$;
\item $\oc$ is erased, and $\wn$ is replaced by $\neg\neg$.
\end{itemize}

\formulas
Indeed, let us write out the images of the laws and inference rules in \ref{simplified}
under the $\neg\neg$-interpretation:

($\oc_\top$) $p\,/\,\oc p$ becomes $\neg\neg\pi\,/\,\neg\neg\pi$;

($\wn_\top$) $\alpha\,/\,\wn\alpha$ becomes $\alpha\,/\,\neg\neg\alpha$;

($\oc\wn$) $\prin\alpha\to\nabla\alpha$ becomes $\prin\alpha\to\neg\neg\alpha$;

($\wn\oc$) $\prin\wn\oc p\to p$ becomes $\prin\neg\neg\neg\neg\pi\to\neg\neg\pi$;

($\oc_\bot$) $\prin\oc\clbot\to\clbot$ becomes $\prin\ab\to\ab$;

($\octo$) $\prin\oc(p\to q)\to(\oc p\to\oc q)$ becomes
$\prin(\neg\neg\pi\to\neg\neg\rho)\to(\neg\neg\pi\to\neg\neg\rho)$;

($\wnto$) $\prin\wn(\alpha\to\beta)\to(\wn\alpha\to\wn\beta)$ becomes 
$\prin\neg\neg(\alpha\to\beta)\to(\neg\neg\alpha\to\neg\neg\beta)$.

\noindent
The resulting formulas are easily derivable in intuitionistic logic, including the last one 
(see \cite{M0}*{\S\ref{int:tautologies}, (\ref{int:neg-neg-imp})}).
\metameta

Intuitionistic laws and inference rules of QHC hold under the $\neg\neg$-interpretation
since it does nothing to intuitionistic connectives and quantifiers and to atomic i-formulas.
Classical laws and inference rules of QHC hold under the $\neg\neg$-interpretation
since the restriction of the $\neg\neg$-interpretation to QC is known
to be an interpretation (see \cite{M0}*{\S\ref{int:negneg}}).

Finally, let us note that by an inductive argument based on
\cite{M0}*{\ref{int:Goedel-Gentzen}}, $\turnstile F_{\neg\neg}\tofrom\neg\neg F_{\neg\neg}$
for every c-formula $F$.
It follows that the second-order meta-specialization of the classical type holds under 
the $\neg\neg$-interpretation.
The other meta-rules hold under the $\neg\neg$-interpretation for trivial reasons.

We have proved

\begin{theorem} (a) If $A_1,\dots,A_n\turnstile _{QHC} A$, then
$(A_1)_{\neg\neg},\dots,(A_n)_{\neg\neg}\turnstile _{QH} A_{\neg\neg}$.

(b) If a formula $A$ is derivable in QHC from $\prin A_1,\dots,\prin A_n$, then $A_{\neg\neg}$
is derivable in QH from $\prin(A_1)_{\neg\neg},\dots,\prin(A_n)_{\neg\neg}$.
\end{theorem}

Of course, by \cite{M0}*{\ref{int:Goedel-Gentzen}}, all of the classical connectives and quantifiers
(and not only $\lor$ and $\exists$) could be prefixed by a $\neg\neg$
in the definition of the $\neg\neg$-interpretation.
Consequently, by \cite{M0}*{\ref{int:Kuroda}} one could formulate
the $\neg\neg$-interpretation as an extension of Kuroda's translation:
prefix by a $\neg\neg$ the entire formula if it is a c-formula,
{\it post}fix by $\neg\neg$'s every classical $\forall$, replace every
$\oc$ by a $\neg\neg$, and erase all $\wn$'s.
Like before, all classical connectives and quantifiers become intuitionistic, and
all atomic c-formulas are re-typed as atomic i-formulas.

As in \cite{M0}, we also get an ``essentially local'' version of the $\neg\neg$-interpretation:
\begin{itemize}
\item postfix by a $\neg\neg$ every classical $\forall$ and $\land$;
\item prefix by a $\neg\neg$ every classical $\exists$ and $\lor$;
\item postfix by a $\neg\neg$ every $\oc$ that is followed by a $\wn$ or by an atomic c-formula;
\item prefix by a $\neg\neg$ the entire formula if it is an atomic c-formula or starts with $\wn$;
\item now erase all $\wn$'s and $\oc$'s;
\item all classical connectives and quantifiers become intuitionistic, and
all atomic c-formulas are re-typed as atomic i-formulas.
\end{itemize}

Since the restriction of the $\neg\neg$-interpretation to QH is
the identity, we obtain

\begin{theorem}\label{conservative2}
The QHC calculus is a strongly conservative extension of QH.
\end{theorem}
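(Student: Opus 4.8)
The plan is to deduce Theorem \ref{conservative2} from the preceding theorem about the $\neg\neg$-interpretation, in exactly the way that Theorem \ref{conservative}(b) was deduced from the $\Box$-interpretation. Recall that the $\neg\neg$-interpretation $A\mapsto A_{\neg\neg}$ is a map from QHC-derivations to QH-derivations (by the theorem just proved: if $A_1,\dots,A_n\turnstile_{QHC}A$ then $(A_1)_{\neg\neg},\dots,(A_n)_{\neg\neg}\turnstile_{QH}A_{\neg\neg}$), and that by construction $A\mapsto A_{\neg\neg}$ does nothing to atomic problems and intuitionistic connectives, erases $\oc$, and replaces $\wn$ by $\neg\neg$. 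Hence for any formula $A$ of QHC that is already a formula of QH (i.e.\ involves no $\wn$, no $\oc$, and no classical connectives or atomic propositions), we have $A_{\neg\neg}=A$ literally. This is the key observation.

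First I would make this ``identity on QH'' statement precise: by inspecting the six clauses defining the $\neg\neg$-interpretation, a formula built only from atomic problems (none of which is the classical constant $0$, since that constant has propositional type and does not occur in a QH-formula), the intuitionistic constant $\bot$, and the intuitionistic connectives $\land,\lor,\to,\forall,\exists$ is left entirely unchanged. So the restriction of the $\neg\neg$-interpretation to QH is the identity functor on QH (this was already asserted in the text, citing \cite{M0}*{\S\ref{int:negneg}}). Next I would invoke the theorem just proved: if $\Sigma_1,\dots,\Sigma_n\turnstile_{QHC}\Sigma$ with all the $\Sigma_i$ and $\Sigma$ being QH-formulas, then applying the $\neg\neg$-interpretation gives $(\Sigma_1)_{\neg\neg},\dots,(\Sigma_n)_{\neg\neg}\turnstile_{QH}\Sigma_{\neg\neg}$, which by the identity observation is exactly $\Sigma_1,\dots,\Sigma_n\turnstile_{QH}\Sigma$. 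Since the reverse implication (a QH-derivation is a QHC-derivation) is trivial because the deductive system of QHC contains that of QH applied to all problems, we conclude that QHC is conservative over QH, and moreover \emph{strongly} so --- i.e.\ conservative at the level of derivable rules, not merely theorems --- because the argument applies verbatim to consequences $\Sigma_1,\dots,\Sigma_n\turnstile\Sigma$ with hypotheses, not just to single theorems $\turnstile\Sigma$.

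I do not expect any real obstacle here; the only points requiring a modicum of care are (i) confirming that a QH-formula never involves the classical falsity constant $0$ (so that the clause ``$0$ is replaced by $\bot$'' is vacuous on QH) and that it never involves a classical atomic proposition (so the retyping clause is vacuous), which is immediate from the syntax conventions of \S\ref{QHC}; and (ii) being explicit that ``strongly conservative'' is meant in the sense defined just before Theorem \ref{conservative}, namely that a derivable rule of QHC expressible in the language of QH is already derivable in QH --- and checking that the $\neg\neg$-interpretation theorem, being stated for syntactic consequences $A_1,\dots,A_n\turnstile A$ and not just for theorems, indeed delivers this stronger conclusion. Thus the proof is a two-line deduction:

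\begin{proof}
The restriction of the $\neg\neg$-interpretation to QH is the identity, since a formula of QH involves neither $0$ nor classical atomic propositions nor any $\wn$ or $\oc$, and the $\neg\neg$-interpretation does nothing to atomic problems and intuitionistic connectives. Hence if $A_1,\dots,A_n\turnstile A$ is a derivable rule of QHC with all of $A_1,\dots,A_n,A$ in the language of QH, then applying the theorem above gives $A_1,\dots,A_n\turnstile_{QH}A$. The converse is trivial since the deductive system of QHC includes that of QH applied to all problems.
\end{proof}
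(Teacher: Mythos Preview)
Your proposal is correct and follows exactly the same approach as the paper: the paper's entire proof consists of the single remark ``Since the restriction of the $\neg\neg$-translation to QH is the identity, we obtain'' followed by the statement of the theorem. You have simply spelled out in more detail why the restriction is the identity and why this, combined with the preceding theorem on the $\neg\neg$-interpretation, yields strong conservativity.
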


\subsection{$\Diamond$-interpretation}\label{diamond}
By Theorem \ref{conservative2},
the $\neg\neg$-interpretation of QHC in QH can be regarded as
an interpretation of QHC in itself which does not preserve the types of
formulas.
In this sense it can be improved, so as to preserve the typing.
This results in the following {\it $\Diamond$-interpretation} of QHC in
itself, which restricts to an unintended embedding of QC:

\begin{theorem} \label{Diamond-interpretation}
If $A$ is a formula of QHC, let $A_\Diamond$ be the formula
of QHC obtained from $A$ by prefixing
\begin{itemize}
\item classical $\to$ and $\forall$ by $\Box$; and
\item atomic c-formulas, $\wn$, and classical $\lor$ and $\exists$ by $\Box\Diamond$,
\end{itemize}
where $\Box=\wn\oc$ and $\Diamond=\neg\Box\neg$.
Then 

(a) $\turnstile A$ implies $\turnstile A_\Diamond$, and the converse holds when $A$ is a formula of QC 
or (trivially) of QH;

(b) if $A_1,\dots,A_n\turnstile A$, then $(A_1)_\Diamond,\dots,(A_n)_\Diamond\turnstile A_\Diamond$;

(c) if a formula $A$ is derivable in QHC from $\prin A_1,\dots,\prin A_n$, then $A_\Diamond$
is derivable in QHC from $\prin(A_1)_\Diamond,\dots,\prin(A_n)_\Diamond$.
\end{theorem}

The conclusion of (a) is to be read as a meta-judgement, that is, it does
not claim that $A\turnstile A_\Diamond$ in QHC, nor the converse
when $A$ is a formula of QC.
In fact the former claim is false as we will see in \cite{M2}*{Remark \ref{g2:Diamond-meta}}.

The proof of Theorem \ref{Diamond-interpretation} is similar to that
of Theorem \ref{nabla-interpretation}, using additionally diagram ($*$) below.
Let us only note that the combination $\Box\Diamond$ arises by applying
the $\Box$-interpretation (in the prefixing version) to a $\neg\neg$.
When this $\neg\neg$ is in front of an atomic c-formula, that atomic
c-formula would have to been prefixed by $\Box\Diamond\Box$; but the last $\Box$
is easily seen to be redundant.

By starting from different versions of the $\neg\neg$-interpretation,
and applying different versions of the $\Box$-interpretation, one gets
a few equivalent forms of the $\Diamond$-translation.
For example, by starting from Kolmogorov's original form of the $\neg\neg$-translation,
we get the following succinct version of the $\Diamond$-interpretation:
\begin{itemize}
\item Prefix all classical connectives and quantifiers, all atomic c-formulas, and all $\wn$'s by $\Box\Diamond$.
\end{itemize}
This interpretation extends Fitting's translation of classical logic in QS4 \cite{Fi}.

On the other hand, by starting with the ``essentially local'' form of the $\neg\neg$-interpretation,
and applying the prefix form of the $\Box$-translation, we get the following version
of the $\Diamond$-interpretation: Prefix classical $\to$ and $\forall$, and
atomic c-formulas by a $\Box$; prefix classical $\lor$ and $\exists$ by a
$\Box\Diamond$; postfix classical $\forall$ and $\land$ by a $\Box\Diamond$;
prefix  by a $\Diamond$ every $\oc$ that is followed by a $\wn$ or by an atomic c-formula;
and if the entire formula is an atomic c-formula or starts with $\wn$, prefix it by a $\Diamond$.
Now atomic c-formulas do not really need to be prefixed by $\Box$'es, since they
are anyway effectively prefixed by double negations in the form $\Box\Diamond$,
which clearly suffices.
Next, the prefix $\Box\Diamond$ of classical $\lor$'s and $\exists$'s can be reduced
to a mere $\Diamond$ by the price of {\it post}fixing classical $\to$'s, $\lor$'s
and $\exists$'s by $\Box$'es.
Finally, by \cite{M0}*{\ref{int:QS4-McKinsey}(a)}, it does not hurt to also prefix
classical $\land$'s by $\Box$'es; and given that, by \cite{M0}*{\ref{int:QS4-Goedel}},
the postfix $\Box\Diamond$ of classical $\forall$'s and $\land$'s can be reduced to a mere $\Diamond$.

To summarize, we get the following ``essentially local'' form of the $\Diamond$-interpretation:
\begin{itemize}
\item Prefix and postfix every classical $\to$ by a $\Box$;
\item prefix classical $\forall$'s and $\land$'s by $\Box$'es, and postfix them by $\Diamond$'s;
\item prefix classical $\lor$'s and $\exists$'s by $\Diamond$'s, and postfix them by $\Box$'es;
\item postfix by a $\Diamond$ every $\oc$ that is followed by a $\wn$ or by an atomic c-formula;
\item prefix by a $\Diamond$ the entire formula if it is an atomic c-formula or starts with $\wn$.
\end{itemize}

Using the usual identities, this can be further reformulated in a more economical way in terms of
$\oc$ and $\wn$:

\begin{itemize}
\item Prefix every classical $\to$ by a $\wn$ and postfix it by an $\oc$;
\item prefix every classical $\forall$ and $\land$ by a $\wn$, and postfix it by $\neg\oc\neg$;
\item prefix every classical $\lor$ and $\exists$ by $\neg\wn\neg$, and postfix it by an $\oc$;
\item replace every $\nabla$ by a $\neg\neg$;
\item replace by $\neg\oc\neg$ every $\oc$ that is followed by an atomic c-formula;
\item if the formula starts with $\wn$, replace that $\wn$ by $\neg\wn\neg$;
\item if the entire formula is an atomic c-formula, prefix it by a $\Diamond$.
\end{itemize}

\formulas
This can be regarded as an alternative form of Kolmogorov's original $\neg\neg$-translation, since it has
the effect of expressing all classical connectives and quantifiers in terms of the intuitionistic ones
along with $\wn$, $\oc$ and the classical negation. 
In particular,
\[\exists\tr x\,p(\tr x)\quad\text{is translated as}\quad \neg\wn\neg\exists\tr x\oc p(\tr x).\]
In words, there exists an $\tr x$ such that $p(\tr x)$ if and only if it is impossible to derive
a contradiction from a construction of an $\tr x$ along with a proof of $p(\tr x)$.
\metameta

\subsection{Applications to QH4}
The $\nabla$-interpretation is easily seen to lift to the interpretation of QH4 in itself described
by Aczel \cite{Ac}:

\[
\begin{CD}
QH4@>\text{Aczel's interpretation}>>QH4\\
@V{\nabla=\oc\wn}VV@VV{\nabla=\oc\wn}V\\
QHC@>\text{$\nabla$-interpretation}>>QHC.
\end{CD}
\]
\smallskip

The vertical arrows of this diagram commute with the inclusions of QH into QH4 and into QHC,
so they are faithful on QH (since QHC is a conservative extension of QH).
On the other hand, the $\nabla$-interpretation was shown to be faithful on QH, so we conclude
that Aczel's interpretation restricts to an unintended embedding of QH into QH4, which we will call
the {\it $\nabla$-translation}.

Since $\fm{\turnstile \nabla\neg\neg\alpha\tofrom\neg\neg\alpha}$ not only in QHC, but also in QH4 
(see \cite{Ac}), we have the commutative diagram

\[
\begin{CD}
QC@>\text{$\neg\neg$-translation}>>QH\\
@V\text{$\neg\neg$-translation}VV@VV\text{inclusion}V\\
QH@>\text{$\nabla$-translation}>>QH4.
\end{CD}
\tag{$*$}
\]
\smallskip

The $\neg\neg$-interpretation of QH4 in QH replaces every occurrence of $\nabla$ by $\neg\neg$.
On the other hand, the $\nabla$-translation of QH into QH4 and
the $\neg\neg$-translation of QC into QH are defined using similar
formulas $A_\nabla$ and $A_{\neg\neg}$, which, as discussed
above, can both be written out by prefixing {\it all} connectives, quantifiers
and atomic subformulas with either $\nabla$ or $\neg\neg$.
Hence $A_\nabla$ and $A_{\neg\neg}$ become equivalent
upon substituting $\nabla$ by $\neg\neg$, and we get
the following commutative diagram.

\[
\begin{CD}
QH@>\text{$\nabla$-translation}>>QH4\\
@V\fm{\prin\alpha\lor\neg\alpha} VV@VV\nabla\mapsto\neg\neg V\\
QC@>\text{$\neg\neg$-translation}>>QH.
\end{CD}
\]
\smallskip

By \ref{pushout formula}, which can in fact be proved in QH4 and not just in QHC (we leave this for
the reader to check), this is a pushout diagram, that is, the $\neg\neg$-translation of QC into QH does not
factor through any logic obtained by adding a set of laws to QH4 that are collectively strictly
weaker than $\fm{\prin\nabla\alpha\tofrom\neg\neg\alpha}$.
In this sense, the classical $\neg\neg$-translation of QC into QH cannot be improved.

\section{Discussion}

\subsection{Knowledge-that vs.\ knowledge-how}\label{philosophy}

Kolmogorov has summarized his philosophical views on intuitionism in his foreword%
\footnote{Which must have been addressed in part to the Soviet censor, as it included
the obligatory denunciation of subjective idealism.
This could well have implications for the wording and emphasis chosen, but hardly for the sincerity
of Kolmogorov's words (as one can judge from his published correspondence with Alexandrov
and from the transcript of Luzin's trial).}
to a 1936 translation of Heyting's book \cite{He2} (translated from Russian):
\smallskip
\begin{center}
\parbox{14.7cm}{\small
``We cannot agree with intuitionists when they say that mathematical
objects are products of constructive activity of our spirit.
For us, mathematical objects are abstractions of actually existing
forms of reality, which is independent of our spirit.
But we know how essential in mathematics is, in addition to pure proof of
theoretical propositions, constructive solution of posed problems.
This second, constructive side of mathematics does not eclipse
for us its first and foremost side: the cognitive one.
However, the laws of mathematical construction, discovered by Brouwer
and systematized by Heyting under the guise of a new intuitionistic
logic, keep their fundamental importance for us, in their present
understanding.''
}
\end{center}
\noindent
This is quite in line with a passage from Kolmogorov's 1929 survey \cite{Kol29}:

\begin{center}
\parbox{14.7cm}{\small
``We could distinguish two sides in this concept of mathematics. On the one side, there
are theories postulating the existence of infinite systems of objects satisfying certain axioms and
formally deriving from the axioms the properties of the system being studied. On the other side,
construction of the corresponding objects, based either on positive integers or on some other resource
of elementary objects, is also recognized as necessary. Experience of the last years shows that
no stable balance was attained between these two sides. The standpoints that came to light in recent times
may be roughly formulated as follows. Hilbert proposed to keep only the former, formal part of mathematics,
while having set us free, by means of his theory of consistency, from the necessity to construct.
On the contrary, Brouwer values mainly the constructive part, but thinks that construction is unable to
give us the ultimate existence of infinite collections that is needed for a free use of the ways of reasoning
that have became common to mathematics; and therefore he demands a radical revision of
the methods of a mathematical proof.

The emergence of these extreme viewpoints is explained by the fact that joining of
the two sides of the set-theoretic mathematics has led to great difficulties and even
contradictions.'' (There follows a discussion of Russell's paradox, Weyl's predicativist restrictions, and
non-measurable sets.)
}
\end{center}
\noindent
The two sides of mathematics referred to by Kolmogorov can be seen as representing
two modes of knowledge (including formalized mathematical knowledge, but also keeping in mind subjects
such as common knowledge and collective intelligence):
\begin{itemize}
\item {\it knowledge-that} (or knowledge of truths), and
\item {\it knowledge-how} (or knowledge of methods).
\end{itemize}
This dichotomy is also noted, from a slightly different perspective, in \cite{F-D}:

\smallskip
\begin{center}
\parbox{14.7cm}{\small
``In contrast with the structural (platonistic) point of view, intuitionistic mathematics focuses
primarily on the {\it subject} (the creative mathematician) and his ability to perform certain
mathematical operations by applying his previously designed constructions ({\it knowing how}).
Hence a notion such as `proof', which refers to the successful completion of a human action, appears
to be more suitable than that of `truth'.
\newline On the other hand, classical mathematics focuses essentially on the {\it object}: eternal
pre-existing mathematical structures ({\it knowing that}); and for this reason, the notion of `truth',
with its prominent descriptive untensed character, is more appropriate.''
}
\end{center}
\smallskip

There is, however, hardly any connection with the distinction made in philosophy between
``knowledge how'' and ``knowledge that''  --- in the tradition originating with G. Ryle, whose
``knowledge how'' is an unconscious, non-articulable ability.%
\footnote{Martin-L\"of argued of ``knowledge how in Ryle's terminology'' that
``the distinction between knowledge how and knowledge that evaporates on the intuitionistic analysis
of the notion of truth.'' \cite{ML96}*{p.~36}.}
The same can be said of procedural vs.\ declarative knowledge of cognitive psychology.
Somewhat closer to our concern here are the distinction between declarative and imperative programming
languages, and another distinction made in philosophy, starting with B. Russell:
knowledge of objects (including mathematical objects) ``by acquaintance'' vs.\ ``by description''.
Hilbert's distinction between formal mathematics (subject only to freedom from contradiction) and
intuitively justifiable, ``finitistic'' methods (especially as reinterpreted in \cite{We}) is also
to the point.
Mathematically most relevant is, of course, Lawvere's adjunction between the Formal and the Conceptual.

Our connectives $\oc$ and $\wn$ amount to two ``conversion'' operators between the two modes of knowledge.
These give rise to compound types of knowledge:
\begin{enumerate}
\item knowledge-that {\it there-exists} a knowledge-how (or knowledge of the possible);
\item knowledge-how {\it to-acquire} the knowledge-that (or knowledge of reasons).
\end{enumerate}
Here knowledge-how to-acquire the knowledge-that some mathematical assertion is true means, of course,
knowledge-how to prove that assertion (cf.\ \cite{ML96}*{pp.\ 28--29}).
In general, (2) could be dubbed ``knowledge-why'' or even ``understanding''.

Now, (1) occurs most distinctively whenever one applies a non-constructive existence theorem.
For instance, for those who feel at home with ZFC, presumably one is supposed to have
the knowledge-that there-exists a knowledge-how to well-order the reals (without being aware of any specific
well-ordering).
For those who feel more at home with constructive mathematics, a more down-to-earth example is provided by
constructive proof-checkers, such as Coq.
If you know that Coq works correctly on your computer%
\footnote{For instance, if you have manually verified the code of its rather small kernel (which in turn
verifies all the needed extensions), and if you believe that one can neglect potential bugs in
the operating system and in the design of
the microprocessor, as well as possibilities of malfunctioning due to a manufacturing defect, heat or
irregularities of power supply (or just because of a microscopic meteorite).}
and you acquired a file with a fully Coq-formalized proof of, say, the Four Color Theorem,%
\footnote{Without any tricks smuggling in the law of excluded middle as in \cite{Gu}}
then by running Coq to certify this proof you
would presumably acquire the knowledge-that there-exists a knowledge-how to color any given planar map in
four colors (without getting any clue how to do the actual coloring).

\subsection{Understanding historic writings}

\subsubsection{Orlov--Heyting interpretation}\label{letters1}
G\"odel's provability translation, as well as his sketch of a proof-relevant S4 that we relied upon
in motivating our formulation of QHC (see \S\ref{deductive}) were anticipated by informal
provability interpretations of intuitionistic logic in the papers
by Heyting \cite{He0}, \cite{He1} and, independently,
Orlov \cite{Or}*{\S\S6,7} (see also \cite{D92} for a discussion
of Orlov's work in English).
We will now briefly review these papers, which will also prepare us for
a discussion of Kolmogorov's letter to Heyting \cite{Kol2}.

All three papers focus mainly on a pair of operators, which we will denote by $\Plus$ and $\Neg$ throughout,
following \cite{He0} and \cite{Kol2}.
(Orlov \cite{Or} writes $\Phi$ and $X$; and in his second paper \cite{He1}, Heyting writes $\Plus$ and $\neg$. 
For consistency, we will alter these to $\Plus$ and $\Neg$, respectively, when quoting from these papers.)
The meaning of these operators will be discussed in a moment.

Heyting and Kolmogorov also use the symbol $\turnstile$ to mean something quite different from both 
the modern meaning of this symbol and its original meaning as used by Frege and Russell.
According to Heyitng \cite{He0}:
\smallskip
\begin{center}
\parbox{14.7cm}{\small
``To satisfy the intuitionistic demands, the assertion must be the observation of an empirical fact, 
that is, of the realization of the expectation expressed by the proposition $p$.
Here, then, is the {\it Brouwerian assertion} of $p$: {\it It is known how to prove $p$.} 
We will denote this by $\turnstile p$. 
The words `to prove' must be taken in the sense of `to prove by construction'.''
}
\end{center}
\medskip
It is not really clear to the author exactly what this may mean from the viewpoint of classical meta-logic.
But if letters used for unknown propositions (such as $p$ in Heyting's words above) are understood as 
propositional variables (or meta-variables for propositional variables), then, with an appropriate 
interpretation of $\Plus$ and $\Neg$ (discussed below), $\turnstile$ may be read in a usual way, as 
asserting provability in the modal logic S4.
With this in mind, we will follow Heyting et al.\ in using lowercase letters for propositions in the
present section (in contrast to the notation elsewhere in the present series of papers).

If $p$ is a proposition, both Heyting and Orlov interpret $+p$ as
{\it $p$ is provable}.
Orlov states unambiguously that $\Plus$ is constrained precisely by what
turns out to be the modal axioms of S4;%
\footnote{Orlov's own system of axioms is weaker than S4 in that it is
based not on classical logic, but on a weaker system now known as
relevant logic, which satisfies $\turnstile\fm{p\tofrom\neg\neg p}$ but neither
of $\turnstile\fm{\neg p\to(p\to q)}$, $\turnstile\fm{ q\to(p\to q)}$, $\turnstile\fm{ p\lor\neg p}$
(see \cite{D92}).
Orlov erroneously believed that the use of full classical logic would
trivialize the $\Plus$ operator.}
Heyting says only that ``A logic that would treat properties of
the function $\Plus$ would [...] be purely hypothetical; [...] one cannot
ask [the intuitionistic mathematicians] to develop this logic'' \cite{He0}.

Heyting interprets $\Neg p$ as ``$p$ implies a contradiction'' and
calls $\Neg$ ``the Brouwerian negation''; whereas Orlov says that it
``has the same meaning'' as Brouwer's notion of ``absurdity'' of
a proposition in \cite{Br25}.
At the same time, Orlov is able to identify $\Neg p$ as $\Plus\neg p$,
where $\neg$ is the classical negation; in this connection, Heyting
only says: ``the negation of a proposition always refers to a proof
procedure which leads to the contradiction, even if the original
proposition mentions no proof procedure'' \cite{He1}.

In his second paper \cite{He1}, Heyting also states:
\smallskip
\begin{center}
\parbox{14.7cm}{\small
``[I]ntuitionist logic, insofar as it has been developed up to now without
using the function $\Plus$, must be understood [... in the sense of] treating
only propositions of the form `$p$ is provable' or, to put it another way,
by regarding every intention as having the intention of a construction
for its fulfillment added to it.''
}
\end{center}
\medskip
In practical terms this means, in particular, that Brouwer's theorem
on triple absurdity \cite{Br25} should be interpreted as
$\turnstile \Neg\Neg\Neg\Plus p\tofrom\Neg\Plus p$.
Thus we consider Heyting's earlier claim in \cite{He0} that
``Mr. Brouwer has proved that $\Neg\Neg\Neg p$ is identical to $\Neg p$''
to be in error, as pointed out essentially by Heyting himself.
This fully agrees with Orlov's independent analysis, which contains
valid proofs of  $\turnstile\Plus p\to\Neg\Neg\Plus p$ and
$\turnstile \Neg\Neg\Neg\Plus p\tofrom\Neg\Plus p$ and informal
arguments that  $\turnstile p\not\to\Neg\Neg p$ and $\turnstile\Neg\Neg\Neg p\not\to\Neg p$.
Orlov also supported these judgements by an analysis
of Brouwer's writings:
\smallskip
\begin{center}
\parbox{14.7cm}{\small
``Brouwer often resorts to the following method of defining notions:
`We call a real number $g$ rational if two whole numbers $p$ and $q$
can be specified so that $g=p/q$; and irrational if one can make
the assumption of the rationality of $g$ to lead to absurdity.'
[\cite{Br25}]

Here it is evident that a rational number is defined via provability
of the existence of the two integers, in other words, by a function
of the form $\Plus a$.
If the assumption that the existence of $p$ and $q$ is provable
leads to absurdity, then $g$ is irrational.
Therefore, irrationality is defined by means of $\Neg\Plus a$.''
}
\end{center}
\medskip
Nevertheless, Orlov was only partially aware of Heyting's principle
quoted above, for he interpreted the intuitionistic understanding of
the principle of excluded middle as $\Plus p\lor\Neg p$, rather than
$\Plus p\lor\Neg\Plus p$.

Apart from these oddities, Heyting's both papers and Orlov's paper seem
to be compatible with each other and with G\"odel's provability
translation.
In particular, both Heyting and Orlov mention the equivalence of
$\Plus\Plus p$ with $\Plus p$ and of $\Plus\Neg p$ with $\Neg p$; and
of the judgements $\turnstile\Plus p$ and $\turnstile p$.
We should mention, however, yet another oddity found in Heyting's letter
to Freudenthal, where he first gave an interpretation of the intuitionistic
negation (see \cite{vA}):

\smallskip
\begin{center}
\parbox{14.7cm}{\small
``I believe that also $a\to b$, like the negation, should refer to a proof
procedure: `I possess a construction that derives from every proof of $a$
a proof of $b$'. In the following, I will keep to this interpretation.
There is therefore no difference between $a\to b$ and $+a\to+b$.
}
\end{center}
\medskip

Note that this is at odds already with Heyting's own distinction
between $\Neg a$ and $\Neg+a$, which he was clear about in \cite{He0}.

We will thus refer to the {\it standard interpretation} of $\Plus$ and $\Neg$, where ``propositions'' 
are formalized as formulas of S4, $\Plus$ is identified with the modality $\Box$ of S4, and $\Neg$ is 
regarded as an abbreviation for $\Box\neg$.
The latter abbreviation is, in fact, very convenient also from
a technical viewpoint, for it gives a more intelligible form to judgements
that correspond to basic properties of a subset of a topological space
under the topological interpretation of S4:
\begin{enumerate}
\item $\turnstile\Plus F$ (or $\turnstile F$): Entire space
\item $\turnstile\Neg F$ (or $\turnstile\neg F$): Empty set
\item $\turnstile\Neg\Plus F$ (or $\turnstile\neg\Box F$): Boundary set
($\iff$ interior is empty $\iff$ complement is dense)\!\!
\item $\turnstile\Neg\Neg F$ (or $\turnstile\neg\Box\neg F$): Dense set
($\iff$ closure is the entire space)
\item $\turnstile\Neg\Neg\Plus F$ (or $\turnstile\neg\Box\neg\Box F$):
Complement is nowhere dense ($\iff$ interior is dense)
\item $\turnstile\Neg\Neg\Neg F$ (or $\turnstile\neg\Box\neg\Box\neg F$):
Nowhere dense set ($\iff$ closure is a boundary set)
\end{enumerate}
Note that by Brouwer's theorem that $\turnstile \Neg\Neg\Neg\Plus F\tofrom\Neg\Plus F$,
this list cannot be continued any further.
It is immediate from $\turnstile\Box F\to F$ that
\begin{itemize}
\item (1) implies (5), which in turn implies (4); and
\item (2) implies (6), which in turn implies (3).
\end{itemize}
Also, it is immediate from the necessitation rule that the following pairs of judgements are contradictory: 
\begin{itemize}
\item (6) contradicts (4); 
\item (4) contradicts (2); 
\item (2) contradicts (1); 
\item (1) contradicts (3); 
\item (3) contradicts (5).
\end{itemize}

Heyting \cite{He0} overlooked only the last entry of the list (1)--(6).
Accordingly, in his discussion of possible combinations of these judgements he missed precisely those
that involve (6).
His ``possible combinations'' consist of judgements that neither imply nor contradict one another.
It is easy to check that there are just nine of them: the empty combination; the six singleton combinations;
and two pairs: (3)+(4) and (5)+(6).
Of these, only (1), (2), (3)+(4) and (5)+(6) are ``definitive'' in Heyting's terminology, that is,
cannot be extended to a larger combination.

\subsubsection{Kolmogorov's letters to Heyting}\label{letters2}
The combination (3)+(4) was discussed by Heyting in detail \cite{He0}:

\smallskip
\begin{center}
\parbox{14.7cm}{\small
``[L]et us consider the proposition `Every even number is a sum of two
primes' (Goldbach's conjecture).
Then $p$ means simply that in taking an even number at random, one expects
to be able to find two primes of which it is the sum.
(This possibility is decided after a finite number of attempts.)
$+p$ on the contrary requires a construction that gives us this
decomposition for all even numbers at once. [...]
In order to be able to assert $\turnstile\Neg\Plus p$, it suffices to
reduce to a contradiction the assumption that one can find a construction
proving $p$; by that one will not yet have proved that the assumption
$p$ itself implies a contradiction.
If we appeal to the example of Goldbach's conjecture, we find:
$\turnstile\Neg\Plus p$ means that one will never be able to find a rule
that effects in advance the decomposition of all even numbers; this does not
mean that there is a contradiction
when one supposes that in taking an even number at random, one will
always be able to divide it into two prime numbers.
It is even conceivable that it could one day be proved that this last
supposition cannot lead to a contradiction; then one would have
at the same time $\turnstile\Neg\Plus p$ and $\turnstile\Neg\Neg p$.
One should abandon every hope of ever settling the question; the problem
would be unresolvable.''
}
\end{center}
\smallskip
Heyting's second paper \cite{He1} contains a virtually identical
discussion but with a different choice of $p$, namely, the one asserting
that a given rational number lies within every interval with rational
endpoints that contains Euler's constant $C$.

Heyting's claims are confirmed rigorously in the standard interpretation,
since there exist dense boundary sets (for instance, $\Q$ viewed as
a subset of $\R$).
This is in contrast with Kolmogorov's claims in his first letter to
Heyting \cite{Kol2}:

\smallskip
\begin{center}
\parbox{14.7cm}{\small
``1. You consider as an example (in [\cite{He0}]) the proposition
`Every even number is a sum of two primes'.
But it is known that the formula $\turnstile\Neg\Neg p\to p$ is true in
this case from either classical or intuitionistic viewpoint.
If one asserts $\turnstile\Neg\Neg p$, then automatically there is
`a construction, which gives us this decomposition for all even numbers
at once'. Hence $\turnstile\Neg\Neg p\to \Plus p$, and the case
$\turnstile\Neg\Neg p\land\Neg\Plus p$ is impossible.

2. It seems to me that the point is not in a defect of this particular
example.
Each `proposition' in your framework belongs, in my view, to one of
two sorts:
\begin{itemize}
\item[($\alpha$)] $p$ expresses hope [l'esperance] that in prescribed
circumstances, a certain experiment will always produce a specified
result.
(For example, that an attempt to represent an even number $n$
as a sum of two primes will succeed upon exhausting all pairs
$(p,q)$, $p<n$, $q<n$.)
Of course, every `experiment' must be realizable by a finite
number of deterministic operations.
\item[($\beta$)] $p$ expresses the intention to find a construction.
\end{itemize}

3. We agree that in the case ($\beta$), the difference between $p$ and
$\Plus p$ is not essential, but the proposition $\Neg\Neg p\to p$
should not be regarded as evident.
In the first case ($\alpha$), on the contrary, $p$ and $\Plus p$ have
distinct meanings, but we have $\turnstile\Neg\Neg p\to p$ and
$\turnstile\Neg\Neg p\to \Plus p$.
This is why $\turnstile \Neg\Neg p\land\Neg\Plus p$ is always impossible,
both in the case ($\alpha$) and in the case ($\beta$).

4. I prefer to keep the name {\it proposition} (Aussage) only for
propositions of type ($\alpha$) and to call ``propositions'' of
type ($\beta$) simply {\it problems} (Aufgaben). Associated to
a proposition $p$ are the problems $\Neg p$ (to derive
contradiction from $p$) and $\Plus p$ (to prove $p$).''
}
\end{center}
\smallskip

Kolmogorov insists that every proposition $p$ of type ($\alpha$)
satisfies $\turnstile\Neg\Neg p\to\Plus p$, apparently because $p$ comes
endowed with a constructive procedure of verification of the validity
of every particular instance of $p$; from $\Neg\Neg p$ we infer that this
procedure actually returns a positive result on all inputs; thus it
yields a constructive proof of $p$.
This applies to the example of $p$ cited by Kolmogorov, {\it Every even
number is a sum of two primes}, since it can be verified constructively
whether a given specific number $n$ is a sum of two primes (by exhausting
all primes $<n$).
The same applies to the other example of Heyting, and in general to
every proposition of the form
$\forall x_1\dots\forall x_n\,q(x_1,\dots,x_n)$, where the validity of
$q$ is verifiable by a finite procedure.%
\footnote{These so-called $\Pi^0_1$ propositions are sometimes claimed to be precisely
the propositions accessible to Hilbert's finitistic reasoning (see \cite{Goe0}*{p.\ 191}).}

However, already the classical negation $\neg p$ of such a proposition
$p$, for instance, the proposition {\it There exists an even number
that is not a sum of two primes}, is presumably neither of
type ($\alpha$)
nor of type ($\beta$) --- since it does not assert that such
a number can be constructed explicitly.
(In contrast, $\Neg p$ must be of type ($\beta$), according to
Kolmogorov's (4).)
In his second letter to Heyting, Kolmogorov himself speaks of
propositions that are neither of type ($\alpha$) nor of type
($\beta$):

\smallskip
\begin{center}
\parbox{14.7cm}{\small
``In the meantime, I have thought about your example of the proposition
`For all $i$ we have $a_i<b_i$'.
Let, in general, $x$ be a variable and $P(x)$ a problem depending on $x$.
The `hope' [Hoffnung] to find for each $x$ a solution of the problem
$P(x)$ is neither a problem nor a proposition in my terminology.
It would be very interesting to know if with this hope you associate
a positive expectation [Erwartung] that for each $x$ the problem $P(x)$
will {\it really be solved} (by whom and when)?
If this expectation is not intended, then I am afraid that we will
arrive at the naive non-intuitionistic understanding of the statement
`$P(x)$ is solvable for each $x$'.''
}
\end{center}
\smallskip

If the $a_i$ and $b_i$ are assumed to be real numbers (Heyting's reply
to Kolmogorov's first letter, which would clarify this matter, is not
available; see, however, a fragment of Heyting's letter to Becker below)
then the proposition $a_i<b_i$ amounts to an existentially quantified
proposition about rational numbers (or integers).
In this case, Kolmogorov's statement `$P(x)$ is solvable for each $x$',
where the problem $P(x)$ is instantiated as ``Prove that $a_x<b_x$''
will be of the form $\forall x?!\exists y\, q(x,y)$, to use
the notation of QHC.
As observed by Troelstra \cite{Tr90},
\smallskip
\begin{center}
\parbox{14.7cm}{\small
``In the second letter [to Heyting] Kolmogorov observes that
the distinction between
`$P(x)$ can be solved for each $x$' and `there is a uniform method for
solving $P(x)$ for each $x$' is non-intuitionistic [that is, ``does not
fit into an intuitionistic point of view'']; the point was accepted by
Heyting, as the fragment of his letter to Becker, reproduced above,
shows.''
}
\end{center}
\smallskip
Here is the relevant part of the said fragment of Heyting's letter to
Becker \cite{Tr90} (translated from German):
\smallskip
\begin{center}
\parbox{14.7cm}{\small
``Another matter is that the application of my logic is restricted to
constructive questions.
What I mean by this may be illuminated by the following example.
Let two sequences of real numbers $\{a_i\}$ and $\{b_i\}$ be given.
The proposition `For each $i$, $a_i=b_i$' admits two interpretations.
\begin{itemize}
\item[a)] It can mean the problem of finding a general proof that
upon the choice of a particular index $i$ specializes to a proof of $a_i = b_i$;
\item[b)] one can understand by it the expectation [Erwartung] that if one
keeps choosing an index $i$ arbitrarily, he will succeed in proving $a_i = b_i$ every time.''
\end{itemize}
The difference is clear if one applies the negation to a) and b).
It is conceivable that the assumption of a proof as requested in a)
could be proved contradictory, without this contradiction affecting the assumption of b).
My logic applies if each proposition is understood as in a); the logic of the non-constructive
expectations b) would be much more involved; I do not consider its development to be very fruitful.''
}
\end{center}
\smallskip

\subsubsection{Interpreting Kolmogorov's letters}
To summarize our reading of the quoted writings, Heyting is right in that one can have at
the same time $\turnstile\Neg\Plus p$ and $\turnstile\Neg\Neg p$; and Kolmogorov is right
in that one cannot have these two at the same time if $p$
is as in any of Heyting's two examples, or more generally if $p$ is
a proposition either of type ($\alpha$) or of type ($\beta$).

If we try extract precise definitions from Kolmogorov's letter,
propositions of type ($\alpha$) satisfy $\turnstile\Neg\Neg p\to\Plus p$
and presumably no other identities (we assume, following Heyting, that
{\it all} propositions satisfy $\turnstile\Plus p\to p$, so Kolmogorov's
$\turnstile\Neg\Neg p\to p$ is automatic); whereas propositions of
type ($\beta$) satisfy $\turnstile p\to\Plus p$ and presumably no other
identities.
On the standard interpretation of $\Plus$ and $\Neg$, propositions of
type ($\beta$) should then correspond to all open sets, and
propositions of type ($\alpha$) to all sets $S$ such that
$\Int\Cl S\subset\Int S$.
This includes, in particular, all closed sets and all regular open sets,
and no other open sets.

Although this is clearly not what Kolmogorov could have meant
regarding the judgement $\turnstile\Neg\Plus p\land\Neg\Neg p$, which
under the standard interpretation of $\Neg$ and $\Plus$ in QS4 is
identified with $\turnstile\neg\Box F\land\neg\Box\neg F$, it is worth
observing that $\fm{\prin\neg\Box p\land\neg\Box\neg p\,/\,\ab}$ is
an admissible rule for S4 (see \cite{M0}*{Example \ref{int:QS4-admissible}}).

Kolmogorov concluded his first letter to Heyting with the proposal
of two operators from propositions of type ($\alpha$) to problems.
The first operator, {\it Prove the given proposition}, resembles
the restriction of our operator ``$\oc$'', which is
defined on all propositions.
The second operator, {\it Derive a contradiction from the given
proposition}, resembles what we denote by $\oc\neg$.
(Note, however, that to interpret Kolmogorov's first operator $\Neg$
as $\Plus\neg$, one has to extend the domain of his second operator
$\Plus$ to the classical negations of propositions of
type ($\alpha$).)
In this setup, Kolmogorov's propositions of type ($\alpha$)
are precisely those propositions that satisfy $\turnstile\neg\oc\neg p\to\oc p$
(note that one of the two negations is classical and the other one
is intuitionistic).

Heyting's 1932 paper \cite{He1'} contains an elaboration of some ideas in
Kolmogorov's first letter, which appears to agree with our conclusions.
For each formula $F$ of a logical calculus that contains classical propositional
calculus, Heyting introduces the problem $\beta F$ of proving $F$;
and discusses the significance of the problems $\neg\beta F\to\beta\neg F$ and
$\neg\beta\neg F\to\beta F$.

\subsection*{Acknowledgements}
I would like to thank L. Beklemishev, M. Bezem, G. Dowek, A. L. Semyonov and D. Shamkanov
for valuable discussions and useful comments.
An initial part of this work was carried out while enjoying the hospitality and
stimulating atmosphere of the Institute for Advanced Study and its
Univalent Foundations program in Spring 2013.

\subsection*{Disclaimers}

1. Some translations quoted in this paper were edited in order to improve syntactic and 
semantic fidelity.
When emphasis is present in quoted text, it is always original.

2. I oppose all wars, including those wars that are initiated by governments at the time when 
they directly or indirectly support my research. The latter type of wars include all wars 
waged by the Russian state in the last 25 years (in Chechnya, Georgia, Syria and Ukraine) 
as well as the USA-led invasions of Afghanistan and Iraq.

\end{document}